\documentclass[a4paper, reqno]{amsart}
\usepackage[dvipsnames]{xcolor}
\usepackage{amsmath,amsthm,amssymb,relsize}
\usepackage[nobysame,alphabetic]{amsrefs}
\usepackage[margin=30mm]{geometry}

\usepackage{changes}

\usepackage{booktabs, multirow, adjustbox, array}

\usepackage{pifont}% http://ctan.org/pkg/pifont

\usepackage{enumerate}
\usepackage{tikz}
\usetikzlibrary{
  cd,
  calc,
  positioning,
  fit,
  arrows,
  decorations.pathreplacing,
  decorations.markings,
  shapes.geometric,
  backgrounds,
  bending
}
\usepackage{tikzsymbols}
\usepackage[arrow,matrix,curve,cmtip,ps]{xy}
\usepackage{pb-diagram}
\usepackage{pb-xy}

% DTG added the following, since that's how I'm used to adding labels to figures:
\usepackage{pinlabel}

\usepackage{subcaption}

\definecolor{darkblue}{rgb}{0,0,0.6}

\usepackage[breaklinks, pdftex, ocgcolorlinks,colorlinks=true, citecolor=darkblue, filecolor=darkblue, linkcolor=darkblue, urlcolor=darkblue]{hyperref}
\usepackage[capitalize,noabbrev]{cleveref}

\newtheorem{proposition}{Proposition}[section]
\newtheorem{theorem}[proposition]{Theorem}

\newtheorem{lemma}[proposition]{Lemma}
\newtheorem{indhyp}[proposition]{Inductive Hypothesis}
\newtheorem{conjecture}[proposition]{Conjecture}

\theoremstyle{definition}

\newtheorem{problem}[proposition]{Problem}

\newtheorem{convention}[proposition]{Convention}

\theoremstyle{remark}
\newtheorem{remark}[proposition]{Remark}

\newtheorem*{remark*}{Remark}

\newcommand{\RR}{\mathbb{R}}
\newcommand{\Z}{\mathbb{Z}}

\newcommand{\Id}{\operatorname{Id}}

\newcommand{\ol}{\overline}
\newcommand{\ul}{\underline}
\newcommand{\wt}{\widetilde}
\newcommand{\wh}{\widehat}
\newcommand{\sm}{\setminus}

\DeclareMathOperator{\Aut}{Aut}

\DeclareMathOperator{\Wh}{Wh}

\DeclareMathOperator{\pt}{pt}
\DeclareMathOperator{\pr}{pr}

\DeclareMathOperator{\TOP}{TOP}
\DeclareMathOperator{\OO}{O}
\DeclareMathOperator{\BTOP}{BTOP}
\DeclareMathOperator{\BO}{BO}

\DeclareMathOperator{\PL}{PL}
\DeclareMathOperator{\Homeo}{Homeo}

\begin{document}
\title{Pseudo-isotopies of simply connected 4-manifolds}

 \author[D.~Gabai]{David Gabai}
\address{Department of Mathematics, Princeton University, NJ 08540, USA}
 \email{gabai@math.princeton.edu}

\author[D.~T.~Gay]{David T. Gay}
\address{Department of Mathematics, University of Georgia, Athens, GA 30602, USA}
 \email{dgay@uga.edu}

 \author[D.~Hartman]{Daniel Hartman}
\address{Max Planck Institute for Mathematics in Bonn, Vivatsgasse 7, 53111 Bonn, Germany}
 \email{daniel.hhartman@gmail.com}

 \author[V.~Krushkal]{Vyacheslav Krushkal}
 \address{Department of Mathematics, University of Virginia, Charlottesville, VA 22903, USA
 }
 \email{krushkal@virginia.edu}

 \author[M.~Powell]{Mark Powell}
 \address{School of Mathematics and Statistics, University of Glasgow, United Kingdom}
 \email{mark.powell@glasgow.ac.uk}

\def\subjclassname{\textup{2020} Mathematics Subject Classification}
\expandafter\let\csname subjclassname@1991\endcsname=\subjclassname
%\expandafter\let\csname subjclassname@2000\endcsname=\subjclassname
\subjclass{
57K40, %General topology of 4-manifolds
%57K10, % Knot theory
57N35. % Embeddings and immersions in topological manifolds
%57N70, % Cobordism and concordance in topological manifolds
%57R67. % surgery obstructions; Wall groups
}
\keywords{4-manifolds, pseudo-isotopy, isotopy}

\begin{abstract}
Perron and Quinn gave independent proofs in 1986 that every topological pseudo-isotopy of a simply-connected, compact topological 4-manifold is isotopic to the identity.  Another result of Quinn is that every smooth pseudo-isotopy of a simply-connected, compact, smooth 4-manifold is smoothly stably isotopic to the identity. From this he deduced that $\pi_4(\TOP(4)/\OO(4)) =0$.
A replacement criterion is used at a key juncture in Quinn's proofs, but the justification given for it is incorrect.  We provide different arguments that bypass the replacement criterion, thus completing Quinn's proofs of both the topological and the stable smooth pseudo-isotopy theorems. We discuss the replacement criterion and state it as an open problem.
\end{abstract}
\maketitle

\section{Introduction}

Let $M$ be a compact $d$-manifold.
Let $\sqsubset$ denote $(M \times \{0\}) \cup (\partial M \times [0,1]) \subseteq M \times [0,1]$.
A \emph{pseudo-isotopy of $M$} is a homeomorphism $F \colon M \times [0,1] \to M \times [0,1]$ such that $F|_{\sqsubset} = \Id_{\sqsubset}$.   The homeomorphism $f \colon M \to M$ obtained from restricting $F$ to $M \times \{1\}$ is said to be \emph{pseudo-isotopic to the identity}.  If $F$ is level-preserving, i.e.\ if $F|_{M \times \{t\}}$ is a homeomorphism from $M \times \{t\}$ to itself for all $t \in [0,1]$, then $F$ is the trace of an isotopy; for brevity in this case we refer to $F$ simply as an isotopy.  Observe that if $F$ is an isotopy then $F$ is isotopic rel.\ $\sqsubset$ to the identity map of $M \times [0,1]$, and in particular $f$ is isotopic to the identity map of~$M$.  There are directly analogous definitions in the smooth and $\PL$ categories.

For $M$ smooth and simply-connected, $F$ a diffeomorphism, and $d \geq 5$, Cerf~\cite{Cerf} proved that $F$ is smoothly isotopic to the identity, which in particular implies that $f$ is smoothly isotopic to the identity. For $d \geq 6$, Cerf's method was extended by Hatcher--Wagoner~\cite{HW} and Igusa~\cite{Igusa-what-happens} to a two-stage obstruction theory deciding in the non-simply-connected setting whether $F$ is smoothly isotopic to the identity.  Cerf and Hatcher--Wagoner's results were extended to the topological and $\PL$ categories by Pedersen~\cite{BLR}*{Appendix~2}, making use of work of Hudson~\cite{Hudson} and~\cite{Pedersen-top-conc}.
The case $d=5$ in the non-simply connected case is open at the time of writing, in all categories.

This article concerns the extension of Cerf's theorem to dimension four, in the topological category and in the smooth category after stabilising $M \times [0,1]$ with copies of $S^2 \times S^2 \times [0,1]$. From now on we set $d=4$.

\subsection{Stable isotopy}\label{subsec:intro-smooth-stable}

First we discuss the smooth stable version.  We recall the definition of a stable isotopy.
Assume that $M$ is a compact, smooth, simply-connected 4-manifold, and that $F \colon M \times [0,1] \to M \times [0,1]$ is a smooth pseudo-isotopy.
After an isotopy, we may assume that there is a 4-ball $D^4 \subseteq M$ such that $F|_{D^4 \times [0,1]}$ is the identity. We can then connect sum
$M \times [0,1]$ with $(\#^k S^2 \times S^2) \times [0,1]$ along $D^4 \times [0,1]$; let $N_k$ denote the result. Consider $F_k\colon N_k\rightarrow N_k$, extending $F|_{(M \times [0,1] )\sm (\mathring{D}^4 \times [0,1])}$ by the identity on $((\#^k S^2 \times S^2) \times [0,1]) \sm (\mathring{D}^4 \times [0,1])$.
If there exists a $k$ such that $F_k$ is smoothly isotopic rel.\ $\sqsubset$ to the identity, then we say that the pseudo-isotopy $F$ is \emph{stably isotopic to the identity}.

Here is Quinn's $4$-dimensional smooth stable pseudo-isotopy theorem~\cite{Quinn:isotopy}*{Theorem~1.4}.

\begin{theorem}\label{thm:PI-stable-I}
Let $M$ be a compact, smooth, simply-connected 4-manifold and let $F \colon M \times [0,1] \to M \times [0,1]$ be a smooth pseudo-isotopy. Then $F$ is stably isotopic  to the identity.
\end{theorem}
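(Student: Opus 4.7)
The strategy is to follow Cerf's parameterized Morse theory, adapted to the 5-dimensional total space $W = M\times[0,1]$ of the pseudo-isotopy. Encode $F$ by the submersion $g := \pi_2 \circ F\colon W \to [0,1]$, which equals $\pi_2$ on $\sqsubset$. It is enough to construct a smooth 1-parameter family of functions $g_s\colon W\to [0,1]$, $s\in[0,1]$, each equal to $\pi_2$ on $\sqsubset$, with $g_0 = \pi_2$, $g_1 = g$, and every $g_s$ a submersion; by the standard Cerf--Morse theoretic dictionary, such a family gives rise to an isotopy rel $\sqsubset$ between $F$ and $\Id$. The plan is to start from an arbitrary generic smooth family between $\pi_2$ and $g$, and then simplify its Cerf graphic --- the trace of critical values in the $(s,t)$-plane --- by the standard moves (birth/death cancellation, independent index crossings, handle-slide arcs) until no critical points remain.

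Because $M$ is simply-connected, the algebraic obstructions of Cerf and Hatcher--Wagoner vanish, and the problem reduces to a finite list of geometric cancellations of pairs of critical points of consecutive index inside the 4-dimensional level sets $g_s^{-1}(t)$. For indices away from the middle, the relevant Whitney disks have dimension $\le 1$ in a 4-manifold and embed generically. The essential difficulty is cancellation of an index-2 and index-3 pair in some $g_s$, where the attaching 2-sphere of the 3-handle meets the belt 2-sphere of the 2-handle in the 4-dimensional level set, and an algebraic cancellation of an intersection pair must be promoted to a geometric cancellation by an embedded Whitney disk. A generic 2-dimensional Whitney disk in a 4-manifold is only immersed, and the naive smooth Whitney trick fails. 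This is the juncture at which Quinn invokes the flawed replacement criterion.

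To bypass the criterion, I would stabilize the ambient 5-manifold by connect-summing copies of $S^2 \times S^2 \times [0,1]$ along a 4-ball disjoint from the critical loci, producing $N_k$, and then resolve the immersed Whitney disks using the transverse pairs of square-zero 2-spheres that each $S^2\times S^2$-summand contributes to a generic level 4-manifold. The stable smooth Whitney trick for simply-connected 4-manifolds --- tubing the double points and extraneous intersections of an immersed Whitney disk into these dual sphere pairs --- then replaces the immersed disk by an embedded one at the cost of one $S^2\times S^2$-summand per bad cancellation. Iterating through the finitely many index-$(2,3)$ cancellations produced by the Cerf reduction yields, after some finite $k$, a critical-point-free family on $N_k$ connecting $\pi_2$ to $\pi_2\circ F_k$, exhibiting $F_k$ as smoothly isotopic to the identity rel $\sqsubset$.

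The main obstacle --- and the reason Quinn originally introduced the replacement criterion --- is the parameterized compatibility of these stabilization-and-tubing operations: the stabilizations must be introduced coherently in the parameter $s$, the tubings carried out in one level set must not disturb adjacent cancellations elsewhere in the Cerf graphic, and the total stabilization count $k$ must remain finite. The technical heart of the proof therefore consists in formalizing a genuinely parameterized version of the stable smooth Whitney trick inside a Cerf-theoretic family, which is exactly where a substitute for Quinn's replacement criterion must be supplied.
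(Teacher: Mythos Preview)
Your setup through the Hatcher--Wagoner reduction to a nested-eye Cerf graphic is correct and matches the paper. But from that point on you misidentify the actual obstruction, and your proposed fix does not address it.

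In the nested-eye picture, the middle-middle level already contains \emph{embedded} finger discs $V$ and \emph{embedded} Whitney discs $W$ for the $A$--$B$ intersections; these come for free from the 1-parameter family. The difficulty is not ``promoting an immersed Whitney disc to an embedded one,'' which is what your stable Whitney trick handles. Quinn already does exactly that tubing-into-$S^2\times S^2$ argument in the part of his proof that is valid (his Section~4.6, recorded here as \cref{thm:quinn-alg-ints-implies-close-eye-smooth-stable-case}). What that argument requires as \emph{input} is that the interiors of the finger discs and the Whitney discs have pairwise zero algebraic intersection, $\mathring V^{11}_k \cdot \mathring W^{11}_\ell = 0$. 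Arranging this condition is precisely what the replacement criterion was supposed to do, and tubing into dual spheres does not help: tubing $W$ into an $S^2\times\{\pt\}$ does not change $\mathring V \cdot \mathring W$, and tubing $V$ likewise leaves the number unchanged. Your final paragraph effectively concedes that you have not supplied a mechanism here.

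The paper's substitute is genuinely different from anything in your sketch. One first constructs new finger discs $\wh V^{11}$ by tubing $V^{11}$ into carefully chosen combinations of \emph{Whitney spheres} $S_{V^{11}_k}$ so that $\mathring{\wh V}^{11}_k \cdot \mathring W^{11}_\ell = 0$ by design. One then \emph{factorises} the family, inserting a cancelling pair of moves $\ul{\wh V}\cdot \wh V$ between $V$ and $W$, and splits the eye into two families: a right-hand family $(\wh V, W)$ to which Quinn's valid argument now applies, and a left-hand family $(V,\ul{\wh V})$ whose finger and Whitney discs have disjoint interiors. The left-hand family is then handled by a separate argument using Quinn's sum-square move and further stabilisation to restore the arc condition. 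None of these ingredients --- Whitney spheres as algebraic duals to the $W$ discs, factorisation of the family, or the sum-square move --- appears in your proposal.
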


In Section \ref{section:DRC-problem} we show that Quinn's proof of one of the steps in his argument, the \emph{Replacement Criterion} in \cite{Quinn:isotopy}*{Section~4.5}, does not work.
We do not know whether the replacement criterion holds, and this is an interesting open question; we state it in \cref{problem:DRC}.

The first main goal of this article is to fix Quinn's proof of \cref{thm:PI-stable-I}. We will use the majority of Quinn's argument, but we modify it to replace his use of the replacement criterion. Our modification uses a method called \emph{factorisation}, which first appeared in~\cite{Gabai-22}*{Lemma~3.15}, to split a pseudo-isotopy in two; see below \cref{remark:separate-proof}. One of the two resulting pseudo-isotopies can be stably isotoped to the identity using Quinn's methods. We present a new argument, a novel application of Quinn's sum square move, to resolve the other pseudo-isotopy. We exploit extra $S^2 \times S^2$ summands to find geometrically dual, framed embedded spheres to certain surfaces, when they are required.

\begin{remark}
Stabilisation is necessary in \cref{thm:PI-stable-I}.
In dimension 4, smooth pseudo-isotopy does not imply smooth isotopy for 1-connected 4-manifolds, as shown first by Ruberman~\cite{ruberman-isotopy}, with later examples constructed by Kronheimer-Mrowka~\cite{Kronheimer-Mrowka-K3},  Baraglia-Konno~\cite{Baraglia-Konno}, Lin~\cite{Lin-dehn-twist-stabn}, and Konno-Mallick-Taniguchi~\cite{Konno-Mallick-Taniguchi}, among others.
\end{remark}

Quinn deduced the following result from \cref{thm:PI-stable-I}, stated as~\cite{Quinn:isotopy}*{Theorem~1.2}. Here $\TOP(4)$ denotes the topological group of homeomorphisms of $\RR^4$ that fix the origin, and $\TOP(4)/\OO(4)$ denotes the homotopy fibre of the canonical map  $\BO(4) \to \BTOP(4)$.

\begin{theorem}\label{thm:top4-o4}
$\pi_4(\TOP(4)/\OO(4)) =0$.
\end{theorem}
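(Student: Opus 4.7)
The plan is to realise a class in $\pi_4(\TOP(4)/\OO(4))$ as a smooth pseudo-isotopy of a compact, simply-connected $4$-manifold, and then invoke \cref{thm:PI-stable-I}. The homotopy-theoretic input is the long exact sequence of the fibration $\OO(4) \to \TOP(4) \to \TOP(4)/\OO(4)$, which identifies $\pi_4(\TOP(4)/\OO(4))$ with equivalence classes of pairs consisting of a smooth rank-$4$ vector bundle $\xi \to S^4$ together with a topological trivialisation $\phi$ of the underlying topological $\RR^4$-bundle.

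First I would translate such a pair $(\xi,\phi)$ into geometric $8$-dimensional data: the associated disk bundle $D\xi$ is a smooth compact $8$-manifold, and $\phi$ is a topological homeomorphism $D\xi \to S^4 \times D^4$, so $\alpha = 0$ amounts to $\phi$ being homotopic, through topological trivialisations, to a smooth bundle trivialisation. Next I would apply the Kirby--Siebenmann handle smoothing dictionary, adapted to dimension $4$ in the style of Freedman--Quinn, to localise the smoothing obstruction in a $5$-dimensional cobordism that is topologically a product $M \times [0,1]$ for some compact, simply-connected, smooth $4$-manifold $M$. Handle-theoretic straightening then exhibits $\alpha$ as the obstruction to making a specific smooth pseudo-isotopy $F \colon M \times [0,1] \to M \times [0,1]$ isotopic rel.\ $\sqsubset$ to the identity.

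At this stage \cref{thm:PI-stable-I} gives that the stabilisation $F_k$ is smoothly isotopic rel.\ $\sqsubset$ to the identity on $N_k$ for some $k$. Because the bundle-theoretic invariant $\alpha$ is local to the obstruction region and is unaffected by the connect-summed $S^2\times S^2$ factors used to form $N_k$ (these modify the $4$-manifold $M$, not the $\RR^4$-bundle structure group), the stable isotopy suffices to conclude that $\alpha = 0$.

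The main obstacle is the second paragraph: producing a pseudo-isotopy $F$ whose isotopy class rel.\ $\sqsubset$ faithfully represents $\alpha$, and verifying that stabilising $M$ by $S^2\times S^2$ summands does not lose information at the level of $\pi_4(\TOP(4)/\OO(4))$. This requires Kirby-style handle smoothing carried out carefully in the low-dimensional, non-stable setting, where the usual smoothing-theoretic tools must be applied with extra care.
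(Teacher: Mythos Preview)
The paper does not actually supply its own proof of \cref{thm:top4-o4}. It states the result in the introduction and attributes the deduction from \cref{thm:PI-stable-I} entirely to Quinn~\cite{Quinn:isotopy}*{Theorem~1.2}; the paper's contribution is to repair the proof of \cref{thm:PI-stable-I}, after which Quinn's original deduction goes through unchanged. So there is no argument in the paper to compare against.

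As for your outline on its own merits: the bundle-theoretic interpretation of $\pi_4(\TOP(4)/\OO(4))$ in your first paragraph is correct, and the overall shape---reduce to a smooth pseudo-isotopy of a compact simply-connected $4$-manifold and invoke \cref{thm:PI-stable-I}---is indeed how Quinn proceeds. But what you have written is a plan, not a proof, and you acknowledge this yourself. The entire content of the deduction lies in the step you flag as ``the main obstacle'': constructing from $\alpha$ a specific pseudo-isotopy $F$ whose stable isotopy class rel.\ $\sqsubset$ faithfully represents $\alpha$, and verifying that the $S^2 \times S^2$ stabilisations used in \cref{thm:PI-stable-I} do not lose this information. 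You have described what needs to be done there but not done it. To fill this in you should consult Quinn's argument in \cite{Quinn:isotopy}, which passes through the identification of $\TOP(4)/\OO(4)$ with a space governing concordances of smooth structures and then uses handle-straightening to localise to a pseudo-isotopy; the stability assertion in your final paragraph also needs a genuine justification rather than the informal ``local to the obstruction region'' claim.
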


The space $\TOP(4)/\OO(4)$ is an important universal space in smoothing theory.
For example, \cref{thm:top4-o4} was the final step in showing \cite{FQ}*{Theorem~8.7A}, that $\TOP(4)/\OO(4) \to \TOP/\OO$ is 5-connected. In combination with Lees and Lashof's immersion theory~\cites{Lees,Lashof-70-I,Lashof-70-II,Lashof-71}, this implies \cite{FQ}*{Theorem~8.7B}, which states that for $M$ a noncompact, connected 4-manifold, concordance classes of smooth structures correspond bijectively with the cohomology group $H^3(M,\partial M;\Z/2)$.

\begin{remark}
There is now an alternative proof due to Gabai~\cite{Gabai-22}*{Theorem~2.5}, of the fact that smoothly pseudo-isotopic diffeomorphisms of a compact 1-connected 4-manifold are stably isotopic. However Gabai's proof does not trivialise the given pseudo-isotopy, so does not recover \cref{thm:PI-stable-I}.
Thus Gabai's theorem~\cite{Gabai-22}*{Theorem~2.5} cannot be applied to prove~\cref{thm:top4-o4}.

We note that Perron's article~\cite{Perron}, which we will discuss more below, did not address \cref{thm:PI-stable-I}, and it is not clear how to approach it using Perron's method, due to his use of the Alexander trick.
Thus as far as we know \cref{thm:top4-o4} cannot be deduced from Perron's work.

The only proof known to us of \cref{thm:top4-o4} makes use of our corrected proof of \cref{thm:PI-stable-I}.
\end{remark}

\subsection{Topological isotopy}
Now we discuss the topological 4-dimensional analogue of Cerf's theorem.

\begin{theorem}\label{thm:main-PI-I}
  Let $M$ be a compact, topological, 1-connected 4-manifold and let $F \colon M \times [0,1] \to M \times [0,1]$ be a pseudo-isotopy. Then $F$ is topologically isotopic rel.\ $\sqsubset$ to the identity $\Id_{M \times [0,1]}$.
\end{theorem}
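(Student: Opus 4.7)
The plan is to adapt the proof of \cref{thm:PI-stable-I} to the topological category, where the crucial gain is that no $S^2 \times S^2$ stabilisation is required: whenever the smooth-stable argument used a stabilising summand to produce a geometrically dual, framed embedded sphere, we can instead invoke the Freedman disk and sphere embedding theorem directly in the simply-connected topological setting. Thus \cref{thm:main-PI-I} should come from running Quinn's (corrected) argument in the topological category.

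First I would put $F$ into Cerf-theoretic general position. Since $M \times [0,1]$ is a compact topological $5$-manifold with simply-connected boundary components, by Kirby--Siebenmann and Quinn's work on topological transversality and handlebody structures in dimensions $\geq 5$, we may choose a handle decomposition of $M \times [0,1]$ and describe $F$ by a generic one-parameter family of topological handle decompositions interpolating between $\id$ and $F$. By the topological analogues of Cerf's arguments (cf.\ Pedersen's extension in \cite{BLR}*{Appendix~2}), we may arrange that this parameterised family has only index $2$ and index $3$ critical events, and that the usual normal-form simplifications are in place.

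Next I would apply the factorisation method of \cite{Gabai-22}*{Lemma~3.15} to split $F$ as a concatenation $F = F_1 \cdot F_2$ of two topological pseudo-isotopies, exactly as in our proof of \cref{thm:PI-stable-I}. The first factor $F_1$ is handled by Quinn's original methods, where each smooth move is replaced by its topological counterpart; the second factor $F_2$ is handled by the novel application of Quinn's sum square move developed in the previous section. At every stage where the smooth-stable proof required a geometrically dual, framed, embedded $2$-sphere arising from an $S^2 \times S^2$ summand, we instead appeal to the Freedman disk embedding theorem: in each such instance the relevant class is represented by an immersed sphere with algebraically dual sphere and trivial self-intersection obstruction, and simple-connectivity of the ambient region lets us promote it to a locally flat embedding topologically.

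The main obstacle is verifying, case by case, that the hypotheses of Freedman's embedding theorem are indeed met in every place where the smooth-stable proof invokes stabilisation, i.e.\ that the relevant Whitney disks, dual spheres, and sum square supports can all be found in simply-connected regions of $M \times [0,1]$ with the required algebraic dual data. A secondary concern is bookkeeping: since the topological category does not enjoy Morse-theoretic niceness automatically, one must check that topological handle slides, handle cancellations, and the Hatcher--Wagoner--style manipulations used in Quinn's framework go through topologically. Once these verifications are in place, the remainder of the argument runs formally parallel to our corrected proof of \cref{thm:PI-stable-I}, and yields a topological isotopy of $F$ to $\id_{M\times [0,1]}$ rel $\sqsubset$.
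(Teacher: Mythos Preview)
Your outline diverges from the paper's proof in two essential places, and in both places there is a real gap rather than just a stylistic difference.

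First, you propose to run Cerf theory directly in the topological category on the compact $M\times[0,1]$, citing Pedersen's high-dimensional extension. The paper does not do this, and for good reason: Cerf's one-parameter analysis is fundamentally smooth. Instead the paper uses the Decomposition \cref{lemma:perron-7-2-plus-details} to write $M = X \cup D^4$ with $X = Y\times[0,1]\cup H$, then removes a point from the $D^4$ to get a noncompact $M_0$, applies smoothing theory (via $\pi_3(\TOP(4)/\OO(4))=0$ and $H^3(M_0,\partial M_0;\Z/2)=0$) to smooth $F|_{M_0\times[0,1]}$, and only then works with a smooth one-parameter family of Morse functions on $M_0\times[0,1]$, together with a control function $\gamma$ to manage the infinitely many critical points near the puncture. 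The goal is then only to cancel the finitely many eyes that meet $X\times[0,1]$ (\cref{prop:goal}); the rest are dealt with by the Alexander trick on $D^4\times[0,1]$. You have no analogue of this reduction, and without it your ``topological Cerf theory'' is not justified.

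Second, your treatment of the left-hand factor is not what the paper does and is problematic on its own terms. You propose to repeat the sum square argument of \cref{section:fixing-the-proof}, replacing each stabilisation-supplied dual sphere by one produced via Freedman's embedding theorem. But the sum square move is a \emph{smooth} deformation of the one-parameter family on $M_0\times[0,1]$, while Freedman's theorem only yields \emph{topological} locally flat embeddings; you cannot feed a merely topological dual sphere into a smooth sum square deformation and expect the result to be a smooth deformation of the family. The paper avoids this entirely for the left-hand eye: after factorisation, the left-hand family has finger discs $V^{11}$ and Whitney discs $\ul{\wh V}^{11}$ built from Whitney spheres, and \cref{lemma:isotopy of finger moves} shows the finger moves can be isotoped to lie in the $\#S^2\times S^2$ summand of the middle level. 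Since the Whitney discs also live in a neighbourhood of $A_1\cup B_1\cup V^{11}$, the entire motion of $B_1$ is supported in a $D^4\subseteq M_0$, and the Alexander trick trivialises this pseudo-isotopy topologically. No sum squares and no disc embedding theorem are needed for this half.

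Freedman's disc embedding theorem \emph{is} used, but only for the right-hand family, inside the black box of \cref{thm:quinn-alg-ints-implies-close-eye} (Quinn's Sections~4.6 and~5.2), where the algebraic intersection numbers $\mathring{\wh V}^{11}_k\cdot\mathring W^{11}_\ell=0$ have already been arranged.
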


The first proof of \cref{thm:main-PI-I}, for a class of $4$-manifolds discussed next, was given by Perron~\cite{Perron}. Perron's main result states the theorem for smooth, compact 4-manifolds without 1-handles. In \cite{Perron}*{Section~7}, Perron deduced the statement for closed, 1-connected, topological 4-manifolds, using an argument he attributes to Siebenmann. In \cref{sec:Perron} we will explain how this deduction can be extended further to compact manifolds with boundary using work of Boyer~\cites{Boyer86,Boyer93} to deduce all cases of \cref{thm:main-PI-I}.
This extension is necessary  because Casson \cite{Kirby-problems}*{Remarks following Problem 4.18} showed there are compact, contractible, smooth 4-manifolds with boundary that do not admit a handle decomposition with no 1-handles.

Quinn~\cite{Quinn:isotopy}*{Theorem~1.4} gave an independent proof of \cref{thm:main-PI-I}, for all  compact, topological, 1-connected 4-manifolds.
Quinn's work may be thought of as a natural extension of Cerf's approach to dimension $4$. A (presently unknown) extension to non-simply connected $4$-manifolds would likely involve a combination of the higher-dimensional Hatcher--Wagoner theory for non-trivial fundamental groups and a suitable extension of Quinn's approach.

The second main goal of this article is to fix Quinn's proof of \cref{thm:main-PI-I}, which also relied on his replacement criterion~\cite{Quinn:isotopy}*{Section~4.5}.
Thus, given the extension of Perron's proof to manifolds with boundary, discussed above, and our completion of Quinn's argument, there are two independent proofs of \cref{thm:main-PI-I} in full generality.
As with the fix of \cref{thm:PI-stable-I}, we will retain the majority of Quinn's argument, however we will again split the given pseudo-isotopy into two using factorisation. One of the resulting pseudo-isotopies can be resolved using Quinn's argument from \cite{Quinn:isotopy}*{Section~4.6}. For the second we arrange for a judicious  application of the Alexander trick.

\begin{remark}
As discussed above, there is a well established obstruction theory for pseudo-isotopies of non-simply-connected manifolds in dimensions $\geq 6$, in both topological and smooth categories. An analogue of such a theory is not presently known for non-simply-connected  $4$-manifolds.
However, it follows from the work of Budney-Gabai~\cite{Budney-Gabai-2}
that the simply-connected hypothesis cannot be removed in \cref{thm:main-PI-I}.
\end{remark}

\subsection{Classification of homeomorphisms up to isotopy}

An important application of \cref{thm:main-PI-I} is to classify homeomorphisms of 1-connected 4-manifolds up to isotopy. To achieve this one also needs a classification of homeomorphisms up to pseudo-isotopy.

The classification of diffeomorphisms up to smooth pseudo-isotopy was done using modified surgery by Kreck~\cite{Kreck-isotopy-classes} for closed, smooth, 1-connected 4-manifolds.
Quinn gave the classification for homeomorphisms of closed, topological, 1-connected 4-manifolds in \cite{Quinn:isotopy}, with a correction of the analysis of the normal invariants given by Cochran-Habegger~\cite{Cochran-Habegger}.  It is also straightforward to adapt Kreck's argument to the topological category.

The outcome \cite{Quinn:isotopy}*{Theorem~1.1}  of the classification of homeomorphisms up to pseudo-isotopy, combined with \cref{thm:main-PI-I}, is that the natural map
\[\pi_0\Homeo^+(M) \xrightarrow{\cong} \Aut(H_2(M),\lambda_M),\]
sending an isotopy classes of orientation-preserving homeomorphisms of $M$ to the induced isometry of the intersection pairing $\lambda_M \colon H_2(M) \times H_2(M) \to \Z$, is an isomorphism.
Surjectivity is due to Freedman~\cite{F}.

For homeomorphisms of compact, 1-connected, topological manifolds, with possibly nonempty boundary, the classification up to pseudo-isotopy, and hence up to isotopy by \cref{thm:main-PI-I}, was completed by Orson-Powell~\cite{Orson-Powell}.
The corresponding surjectivity result in the case of nonempty boundary is due to Boyer~\cite{Boyer86}.

\subsection*{Organisation}

We begin by explaining our remedy for the proof of \cref{thm:PI-stable-I}.
In \cref{sec:setting-up} we recall the start of Quinn's proof in the smooth setting, and state the key propositions from his paper that we will use.
\cref{sec:sum-square-move} recalls two key geometric constructions: Whitney spheres and the sum square move.
\cref{section:fixing-the-proof} contains the details of our fix for the proof of \cref{thm:PI-stable-I}.

\cref{sec:Perron} proves a key lemma on a decomposition of compact 1-connected topological 4-manifolds. It has two aims. It gives a key input for Quinn's topological approach, and it enables us to explain how Perron's method can be applied to all compact 1-connected 4-manifolds, and not just those that are either closed or admit a handle decomposition without 1-handles.

In \cref{section:fixing-the-proof-topologically} we adapt our fix from \cref{section:fixing-the-proof} to the topological case.
Finally, in \cref{section:DRC-problem} we give more details on the problem with Quinn's proof of the Replacement Criterion, and pose the question of whether the replacement lemma holds.

\subsection*{Acknowledgements}
We are grateful to Frank Quinn for graciously discussing his work with us, to Daniel Galvin for pointing out a subtlety with the application of smoothing theory at the start of the proof of \cref{prop:goal}, and to the referee for several useful suggestions.

David Gabai was partially supported by NSF grant DMS-2304841.
David T.\ Gay and Daniel Hartman were partially supported by NSF grant DMS-2005554. David T.\ Gay was also supported in part by Simons Foundation grant MP-TSM-00002714.
Vyacheslav Krushkal was supported in part by NSF grants DMS-2105467 and DMS-2405044.
Mark Powell was partially supported by EPSRC New Investigator grant EP/T028335/2 and EPSRC New Horizons grant EP/V04821X/2.

\section{Morse functions, nested eye Cerf graphics, and the start of Quinn's proof }\label{sec:setting-up}

In this section we summarise the start of Quinn's proof assuming the input of a smooth pseudo-isotopy.   We explain the position one is in prior to Quinn's use of the replacement criterion, and to set up notation.
After the replacement criterion, Quinn's argument for completing the stable smooth proof is also valid, and we make use of this too, in the form of \cref{thm:quinn-alg-ints-implies-close-eye-smooth-stable-case}  below.

Throughout this section and \cref{section:fixing-the-proof} we work in the smooth category.  In \cref{sec:Perron,section:fixing-the-proof-topologically} we will explain how to adapt the arguments to the topological category, unstably.

A smooth pseudo-isotopy $F \colon M \times [0,1] \to M \times [0,1]$ of a smooth compact 1-connected 4-manifold can be translated into a 1-parameter family of generalised Morse functions $G_t \colon M \times [0,1] \to [0,1]$, where $G_0 = \pr_2$ and $G_1 = \pr_2 \circ F$. Here $\pr_2$ denotes projection onto the second factor, namely~$[0,1]$. Both $G_0$ and $G_1$ have no critical points.    The family $G_t$ is also accompanied by a 1-parameter family of gradient-like vector fields $\xi_t$ subordinate to $G_t$.

Given a pair $(G_t,\xi_t)$, one can in turn recover a pseudo-isotopy $F \colon M\times [0,1] \to M \times [0,1]$.
The proof of the fact that pseudo-isotopy implies isotopy consists of \emph{deformations} of these 1-parameter families, i.e.\ a 1-parameter family of 1-parameter families. A deformation corresponds to an isotopy of the pseudo-isotopy $F$. The aim is to deform $(G_t,\xi_t)$ until there are no critical points of $G_t$, for all $t \in [0,1]$. Such a 1-parameter family corresponds to a pseudo-isotopy $F$ that is an isotopy.

Using $\Wh_2(\{1\})=\{1\}$, by the work of Hatcher--Wagoner~\cite{HW}*{Chapter~VI,~Proposition~3} one can arrange by a deformation of the family that the Cerf graphic associated to $G_t$ consists of finitely many nested eyes -- pairs of index 2 and 3 critical points that are born near time $t_b$, and die at time $t_d > t_b$, with no rearrangements. More precisely, we let $t_b$ be the time of the final birth, and let $t_d$ be the time of the first death.  See Figure \ref{figure:Cerf graphic}.
According to Hatcher--Wagoner~\cite{HW}*{Chapter~VI,~Proposition~3}, we may assume that there are no handle slides, no critical value crossings, and that the births and deaths are independent. In particular, there are no trajectories between any pair of index 3 critical points, and similarly for any pair of index 2 critical points. Moreover at the moment of each birth and shortly thereafter, the index $2$ and $3$ critical points that appear do not have trajectories to any other critical points, other than the unique trajectory between them. The same holds at each death, and shortly before.

\begin{figure}[ht]
\centering
\includegraphics[height=4.2cm]{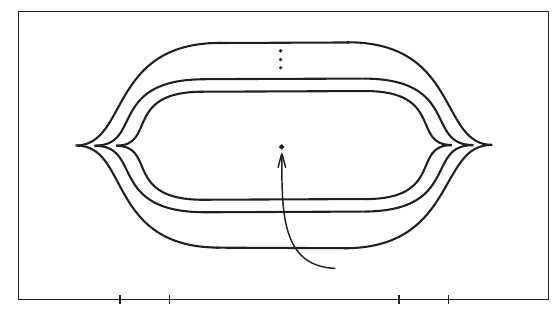}
{\small
\put(-170,-7){$t_b$}
\put(-150,-7){$t_f$}
\put(-62,-7){$t_w$}
\put(-44,-7){$t_d$}}
{\scriptsize
\put(-83,13){\rm middle middle level}
}
\caption{A family of nested eyes.}
\label{figure:Cerf graphic}
\end{figure}

If we can `close' the innermost eye in the family of nested eyes, one at a time, then we complete the proof of the smooth stable isotopy theorem. We formalise this in the following inductive hypothesis.

\begin{indhyp}[$\mathbf{E(n)}$]
\label{ind-hyp}
    Let $M$ be a smooth 1-connected 4-manifold and let $(G_t,\xi_t)$ be as above, a 1-parameter family of generalised Morse functions and gradient like vector fields on $M \times [0,1]$. Suppose that the associated Cerf graphic consists of $n$ nested eyes with cancelling pairs of index 2 and 3 critical points, with no handle slides.  Then after stabilising $M \times [0,1]$ with $(\#^p S^2 \times S^2) \times [0,1]$, for some $p$,  there is a deformation of $(G_t,\xi_t)$ to a 1-parameter family without critical points.
\end{indhyp}

We will prove the base case $E(1)$ and we will deduce the case $E(n)$ from $E(n-1)$.
The core of Quinn's proof starts with a nested eye family, and works on the innermost eye.
We set up notation and recall the salient points of Quinn's argument for removing the innermost eye.

Let us suppose we have a family of $n$ nested eyes, for some $n \geq 1$.
In the level sets $G_{t}^{-1}(1/2)$, for $t_b < t< t_d$, we have a copy of $M \#_{i=1}^n S^2 \times S^2$.
For each $t$ we have $n$ spheres $\{A_i^t\}_{i=1}^n$, which are the ascending spheres of the index 2 critical points, and $n$ spheres $\{B_i^t\}_{i=1}^n$, which are the descending spheres of the index 3 critical points.  Shortly after the last birth time $t_b$, the sphere $A_i^t$ is the sphere $S^2 \times \{\pt\}$ in the $i$th $S^2 \times S^2$ summand. Similarly the sphere $B_i^t$ is the sphere $\{\pt\} \times S^2$ in the $i$th $S^2 \times S^2$ summand.
The $A_i^t$ and the $B_i^t$ are enumerated so that $A_1$ and $B_1$ correspond to the innermost eye, and then moving outwards, as shown in e.g.\ \cref{figure:From-VW-to-VW’W’W}.  Let $A^t := \cup_{i=1}^n A_i^t$ and let $B^t := \cup_{i=1}^n B_i^t$.

One may specify an identification of $G_{t}^{-1}(1/2)$ with $M \#_{i=1}^n S^2 \times S^2$ for all $t_b < t< t_d$. This is implicit in \cite{Quinn:isotopy} and described in detail in \cite{KMPW}*{Construction 3.1}; see also \cite{gay2021diffeomorphisms}*{Proof of Theorem 9}. Then it makes sense to discuss subsets of $G_t^{-1}(1/2)$, for $t_b < t< t_d$ as being in the fixed manifold $M \#_{i=1}^n S^2 \times S^2$. Following \cite{Quinn:isotopy}*{p.~353}, we
may consider that $A^t$ stays fixed as $t$ varies in $t_b <t <t_d$, $B^t$ moves by an isotopy, and $A^t \cup B^t$ undergoes a regular homotopy.
We may also assume that there are times $t_f$ and $t_w$, with $t_b < t_f < 1/2 < t_w < t_d$, such that at time $t_f$ a collection of finger moves  occur, and at time $t_w$ a collection of Whitney moves occur.  During these moves,  intersections of $B_j^t$ with $A_i^t$ are created or removed, respectively.   We consider the \emph{middle-middle level}, $G_{1/2}^{-1}(1/2)$.  We can let the finger and Whitney discs evolve so that we see both simultaneously in the middle-middle level.  We write $A_i := A_i^{1/2}$ and $B_j := B^{1/2}_j$; that is, in the middle-middle level we drop the $t$ from the notation.

In the middle-middle level, we call the finger move discs for $A_i$, $B_j$ intersections $\{V^{ij}_k\}$, and write $V^{ij} := \cup_k V^{ij}_k$. Analogously we call the Whitney discs $\{W^{ij}_{\ell}\}$, and write $W^{ij} := \cup_{\ell} W^{ij}_{\ell}$.  We also write $V := \cup_{i,j} V^{ij}$ and $W := \cup_{i,j} W^{ij}$.

A Cerf family $(G_t,\xi_t)$, up to deformation, determines and is determined by the data $(A,B,V,W)$ in $M \#_{i=1}^n S^2 \times S^2$. This is implicit in \cite{Quinn:isotopy}; a detailed discussion is given in \cite{KMPW}*{Section 3.1}. One of Quinn's insights was to describe  modifications of this data that correspond to deformations of the family $(G_t,\xi_t)$. The idea of his proof is to start with $(G_t,\xi_t)$, consider the corresponding $(A,B,V,W)$, suitably modify them, and then deduce that $(G_t,\xi_t)$ can be deformed to a Cerf family without critical points.

Working on the innermost eye, Quinn showed in \cite{Quinn:isotopy}*{Sections~4.2--4.4} that we can assume, after a deformation, that both \[\big(\bigcup_k \partial V^{11}_k \cup \partial W^{11}_k\big) \cap A_1\] and
\[\big(\bigcup_k \partial V^{11}_k \cup \partial W^{11}_k\big) \cap B_1\] are arcs in $A_1$ and $B_1$ respectively.  We will refer to this as \emph{Quinn's arc condition.}
See \cref{figure:Wh-discs-with-arc-boundaries}.

\begin{convention}
We will assume that the finger and Whitney discs, $\{ V^{11}_k\}$ and $\{ W^{11}_l\}$, for the intersections in the innermost eye, are indexed according to their order in the arc, as indicated in \cref{figure:Wh-discs-with-arc-boundaries}.
\end{convention}

\begin{figure}
\centering
\includegraphics[height=2.7cm]{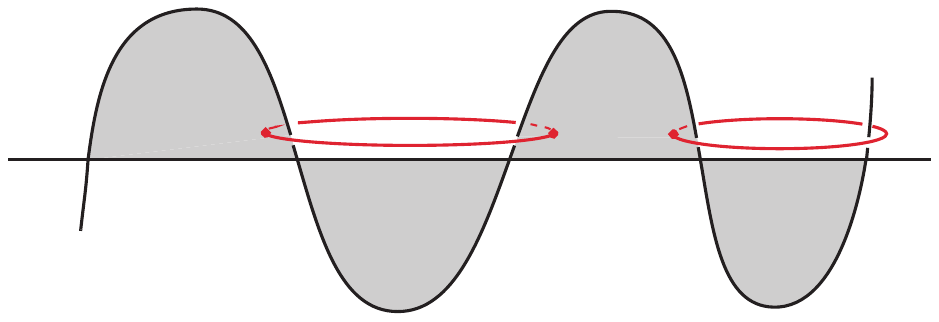}
{\scriptsize
\put(-225,42){$A_1$}
\put(-203, 20){$B_1$}
\put(-185, 55){$W^{11}_1$}
\put(-85,55){$W^{11}_2$}
\put(-134,20){$V^{11}_1$}
\put(-45,20){$V^{11}_2$}
\put(-41,56){{\color{red}$S_{V^{11}_2}$}}
\put(-135,56){{\color{red}$S_{V^{11}_1}$}}
}
\caption{Finger and Whitney discs with boundaries forming an arc in $A_1$ and in $B_1$. The Whitney spheres from \cref{subsection:whitney spheres} corresponding to the finger move discs are also shown.}
\label{figure:Wh-discs-with-arc-boundaries}
\end{figure}

For a disc $D$, let $\mathring D$ denote its interior.
The following statement summarizes the result of  \cite{Quinn:isotopy}*{Section~4.6}.

\begin{theorem}[Quinn]\label{thm:quinn-alg-ints-implies-close-eye-smooth-stable-case}
    Suppose that $M$ is smooth, compact, and 1-connected, and suppose that $F \colon M \times [0,1] \xrightarrow{\cong} M \times [0,1]$ is a smooth pseudo-isotopy. Consider an associated 1-parameter family $(G_t,\xi_t)$, and suppose that the associated Cerf graphic consists of $n$ nested eyes with cancelling pairs of index 2 and 3 critical points, and no handle slides. Consider the data of spheres and finger/Whitney discs $(A,B,V,W)$ in the middle-middle level.

    If Quinn's arc condition is satisfied for $(A,B,V,W)$, and if the algebraic intersection numbers $\mathring{V}^{11}_k \cdot \mathring{W}^{11}_{\ell}$ vanish for all $k,\ell$, then after stabilisation of the pseudo-isotopy with $(\#^p S^2 \times S^2) \times [0,1]$, for some $p$, there is a smooth deformation of the family to one with $(n-1)$ nested eyes.
\end{theorem}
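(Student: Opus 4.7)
The plan is to separate the finger discs $V^{11}$ from the Whitney discs $W^{11}$ geometrically in the middle-middle level, after stabilising with sufficiently many copies of $S^2 \times S^2$. Once $V^{11}$ and $W^{11}$ are disjoint, one can perform the Whitney moves simultaneously with, and so as to cancel, the corresponding finger moves. Combined with Quinn's arc condition, this will leave $A_1^t$ and $B_1^t$ transverse at a single point for all $t_b < t < t_d$, after which the cancelling index 2/3 pair can be eliminated to reduce the Cerf graphic from $n$ to $n-1$ nested eyes.

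First I would use the hypothesis that $\mathring{V}^{11}_k \cdot \mathring{W}^{11}_\ell = 0$ for all $k,\ell$ to pair up the intersection points of $\mathring{V}^{11}_k$ with $\mathring{W}^{11}_\ell$, and then find immersed Whitney discs for these pairs in the middle-middle level. Such discs exist because the complement of $A \cup B$ in $M \#_{i=1}^n (S^2 \times S^2)$ is simply connected. These Whitney discs will in general have self-intersections, excess intersections with $A \cup B \cup V \cup W$, and possibly incorrect framings, and these defects cannot be removed smoothly without further input. To resolve them, I would stabilise the pseudo-isotopy with further copies of $S^2 \times S^2$ and use the extra summands to produce embedded, framed, geometrically dual spheres for each Whitney disc. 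Tubing each Whitney disc into its dual absorbs the unwanted intersections and corrects the framings, leaving embedded framed Whitney discs that are pairwise disjoint and disjoint from $A \cup B$. Performing Whitney moves along them then makes $V^{11}$ and $W^{11}$ geometrically disjoint in the middle-middle level.

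The main obstacle is to upgrade this middle-middle level rearrangement to a deformation of the entire 1-parameter family $(G_t, \xi_t)$. The key point is that once $V^{11}$ and $W^{11}$ are disjoint at $t = 1/2$, one can slide the Whitney move time $t_w$ down past the finger move time $t_f$, arranging each Whitney move to cancel the finger move that created the relevant pair of intersections. Quinn's arc condition ensures that this rearrangement is compatible with the way $V^{11}$ and $W^{11}$ meet $A_1 \cup B_1$, so it can be realised as a genuine deformation of the family rather than merely a modification of the slice at $t=1/2$. The resulting family has $A_1^t \cap B_1^t$ consisting of a single transverse point for all $t \in (t_b, t_d)$, and a standard Cerf-theoretic uncrossing of the innermost birth-death pair then produces a family with $(n-1)$ nested eyes, as required.
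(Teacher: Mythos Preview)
The paper does not give its own proof of this theorem; it is stated as a summary of \cite{Quinn:isotopy}*{Section~4.6} and used as a black box. Your overall strategy---use the vanishing of $\mathring{V}^{11}_k \cdot \mathring{W}^{11}_\ell$ to find second-order Whitney discs, clean them up using dual spheres supplied by stabilisation, and thereby make $\mathring{V}^{11}$ and $\mathring{W}^{11}$ geometrically disjoint in the middle-middle level---is indeed the shape of Quinn's argument. (Your description of the cleanup is slightly garbled: one does not ``tube each Whitney disc into its dual''; rather one tubes the second-order disc into a fresh stabilisation sphere so that it \emph{acquires} a geometric dual, and then uses that dual, together with duals to $A$, $B$, $V$, $W$, via the Norman trick to remove the unwanted intersections and fix framings. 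But this is wording, not a real gap.)

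The genuine gap is in your final paragraph. Geometric disjointness of $\mathring{V}^{11}$ and $\mathring{W}^{11}$ does \emph{not} mean that ``each Whitney move cancels the finger move that created the relevant pair of intersections.'' The disc $W^{11}_\ell$ is not the disc $V^{11}_k$, and the Whitney move guided by $W^{11}_\ell$ does not undo the finger move guided by $V^{11}_k$ merely because their interiors miss one another; the composite motion of $B_1$ is a nontrivial isotopy in general. Nor can you literally ``slide $t_w$ past $t_f$'': before time $t_f$ the intersection points paired by $\partial W^{11}$ do not yet exist on $A_1 \cap B_1$. You correctly flag that the arc condition is what makes the endgame work, but the mechanism you describe is not it. Closing the innermost eye from the disjoint configuration is precisely where Quinn's Section~4.6 uses the arc condition substantively: once the interiors are disjoint, the alternating arc of $V^{11}$- and $W^{11}$-discs is an embedded $2$-complex, and one argues (working from the outermost disc along the arc) that the $W^{11}$ discs can be isotoped rel.\ boundary to become parallel to the $V^{11}$ discs, so that the finger and Whitney moves genuinely cancel as a deformation of the family. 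Your sketch asserts the conclusion of this step without supplying the argument.
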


Quinn used the problematic replacement criterion to arrange for the algebraic intersection condition in \cref{thm:quinn-alg-ints-implies-close-eye-smooth-stable-case}  to hold. Since we cannot appeal to the replacement criterion, we must provide an alternative argument.

\section{Whitney spheres and the sum square move}\label{sec:sum-square-move}

Here we take a short digression to recall two important constructions that we shall need in \cref{section:fixing-the-proof}.  In \cref{subsection:whitney spheres} we recall the construction of Whitney spheres. In \cref{subsection:sum-square} we recall Quinn's sum square move.

\subsection{Whitney spheres}\label{subsection:whitney spheres}

%\begin{construction}\label{construction:whitney-sphere}
    We recall a construction of the \emph{Whitney sphere} $S_{V_k^{ij}}$ associated with a finger move disc $V_k^{ij}$. These spheres have appeared in different guises in the literature, cf.~ \cite{Quinn:isotopy}*{Section 4.3}, \cite{FQ}*{Section 3.1, Ex.~(2)}, \cite{COP20}*{Section~4.2}, \cite{ST-2019}*{Section 2}. We use the terminology
    and the description from \cite{ST-2019}.

    The description is given in ${\mathbb R}^3\times {\mathbb R}$ where $A_i$ and the finger disc $V_k^{ij}$ are in ${\mathbb R}^3\times \{0\}$, and $B_j$ is represented as (arc $\subseteq {\mathbb R}^3$)$\times [-1,1]$, Figure \ref{figure:Whitney sphere}.
    The Whitney sphere is drawn red, and it consists of two discs,  $D_{i}\subseteq {\mathbb R}^3\times\{ i\}$, $i=-1,1$, joined by an annulus (circle $\subseteq {\mathbb R}^3$)$\times [-1,1]$. Each $D_i$ is constructed using two copies of the finger move disc $V_k^{ij}$, so overall the Whitney sphere contains four pushed-off copies of $V_k^{ij}$.

    The Whitney sphere is framed and embedded and can be assumed to lie in an arbitrarily small neighbourhood of $V_k^{ij}$.
%\end{construction}

\begin{figure}[h]
\centering
\includegraphics[height=2.15cm]{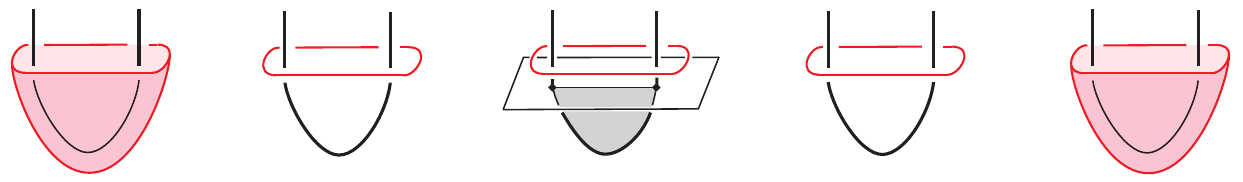}
{\scriptsize
\put(-220, 16){$V_k^{ij}$}
\put(-182, 20){ $A_i$}
\put(-246,56){$B_j$}
\put(-405, -5){$t=-1$}
\put(-329, -5){$-1<t<0$}
\put(-223, -5){$t=0$}
\put(-135, -5){$0<t<1$}
\put(-40, -5){$t=1$}
\put(-370,12){{\color{red}$D_{-1}$ }}
\put(-60,12){{\color{red}$D_{1}$ }}
}
\caption{A description of the Whitney sphere $S_{V_k^{ij}}$ in $\mathbb{R}^3\times \mathbb{R}$.}
\label{figure:Whitney sphere}
\end{figure}

\subsection{The sum square move}\label{subsection:sum-square}

We recall Quinn's sum square move \cite{Quinn:isotopy}*{Section~4.2}, which can be used to modify finger or Whitney disc configurations by a deformation of the pseudo-isotopy.
The data for the move is a framed embedded square~$S$ in the middle-middle level, with interior disjoint from the spheres $A$ and~$B$, and from the discs~$V$. The square has two edges on the $V$ discs, denoted $V_0$ and $V_i$ (anticipating the proof below) in Figure \ref{figure:sum square}, one edge on $A$, and one on $B$. New $V$ discs are obtained by cutting $V_0, V_i$
along the boundary edges of the sum square $S$, and gluing in two parallel copies of $S$. The effect of the move on the boundaries of the discs, on $A$ and~$B$ spheres, is illustrated in Figure \ref{figure:sum square boundaries}.

\begin{figure}[h]
\centering
\includegraphics[height=3.8cm]{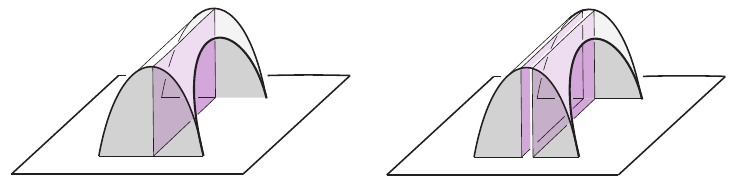}
{\small
\put(-304,89){$B$}
\put(-300,17){$A$}
\put(-360,30){$V_0$}
\put(-297,60){$V_i$}
\put(-315,60){$S$}
}
\caption{The sum square move}
\label{figure:sum square}
\end{figure}

Figure \ref{figure:sum square} is a $3$-dimensional model for the sum square. Here $A, V_0, V_i$, and a neighbourhood of the arc of $\partial S$ in $B$  are pictured in the ``present'', ${\mathbb R}^3\times\{ 0\}\subseteq {\mathbb R}^3\times {\mathbb R}$. The rest of $B$ extends into the past and the future. The framing of $S$ along its boundary is determined in the $3$-dimensional model by a non-vanishing vector field on $\partial S$ which is normal to $S$ and tangent to $A, B$ and the $V$ discs; this framing has to admit an extension over $S$ for the move to give rise to embedded $V$ discs.

A justification is given in \cite{Quinn:isotopy}*{Section~4.2} for why the sum square move gives a deformation of the pseudo-isotopy. As usual in the subject, given the boundary data $\partial S$, the challenge is to find a framed, embedded $S$ disjoint from other given surfaces. The proof in \cref{section:fixing-the-proof} below explains how to achieve this in relevant situations in the stable setting.
Note that if $S \cap W = \emptyset$, then the~$V$ discs do not acquire new intersections with $W$ as a result of this move.

\section{The proof of the inductive step}\label{section:fixing-the-proof}

We continue with the notation and setup from \cref{sec:setting-up}, and describe our correction to Quinn's proof of~\cref{thm:PI-stable-I}.

Suppose that there are $m$ finger discs $\{V^{11}_k\}_{k=1}^m$. Then there are also $m$ Whitney discs $\{W^{11}_{\ell}\}_{\ell =1}^m$ for $A_1$--$B_1$ intersections.  We fix an orientation on each of these discs.
Define \[T_{\ell} := \#_{k=\ell}^m S_{V^{11}_k},\] where the connected sum is formed by tubing the Whitney spheres $S_{V^{11}_k}$ together along arcs in the $W^{11}_{k}$ that are parallel to one of the boundary arcs of $W^{11}_{k}$, for $k >\ell$. (The tubing could be done along any arcs, not necessarily arcs in the Whitney discs; generally this would result in algebraically cancelling pairs of intersections which are fine for the applications below.)

\begin{lemma}
The spheres $\{T_{\ell}\}_{\ell=1}^m$ are mutually disjoint, framed, and embedded in the complement of $A \cup B \cup V$. For some choice of orientations on the $T_{\ell}$ we have algebraic intersection numbers $T_{\ell} \cdot W^{11}_k = \delta_{k\ell}$.  That is, the collection $\{T_\ell\}$ forms a collection of algebraically dual spheres to the Whitney discs $\{W^{11}_k\}$.
    \end{lemma}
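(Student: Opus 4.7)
My plan is to verify each assertion of the lemma in turn, using the local model of the Whitney sphere from Section~\ref{subsection:whitney spheres}. By construction, each $S_{V^{11}_k}$ is framed and embedded, lying in an arbitrarily small neighbourhood $N_k$ of $V^{11}_k$. Inspecting the local picture in $\RR^3 \times \RR$, the caps $D_{\pm 1}$ at $t = \pm 1$ lie off $A_1$ (which sits at $t = 0$), and the annular circle at $t=0$ can be chosen to miss $A$, $B$, and $V^{11}_k$; moreover the caps consist of pushed-off copies of $V^{11}_k$, hence are disjoint from $V$. Thus $S_{V^{11}_k} \subseteq N_k \setminus (A \cup B \cup V)$. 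Since the finger discs have pairwise disjoint interiors, the $N_k$ may be chosen pairwise disjoint, so the $S_{V^{11}_k}$ are mutually disjoint.

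Next I will compute the intersection $S_{V^{11}_k} \cdot W^{11}_j$. A Whitney disc $W^{11}_j$ can meet $S_{V^{11}_k}$ only near a point of $A_1 \cap B_1$ shared between $\partial V^{11}_k$ and $\partial W^{11}_j$, and the local model at such an endpoint produces a single transverse intersection. Under Quinn's arc condition, the arcs of $V$s and $W$s on $A_1$ alternate and concatenate into a single arc; after relabelling I may assume the order is $W^{11}_1, V^{11}_1, W^{11}_2, V^{11}_2, \ldots, W^{11}_m, V^{11}_m$. The adjacency relation is determined by endpoint sharing in $A_1 \cap B_1$, so the same ordering holds on $B_1$; consequently $V^{11}_k$ shares precisely one endpoint with $W^{11}_k$ (on its left), precisely one with $W^{11}_{k+1}$ (on its right, for $k<m$), and no endpoint with any other $W^{11}_j$. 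A direct local computation, with a coherent choice of orientations, then yields
\[
S_{V^{11}_k} \cdot W^{11}_k = +1, \qquad S_{V^{11}_k} \cdot W^{11}_{k+1} = -1 \text{ for } k<m, \qquad S_{V^{11}_k} \cdot W^{11}_j = 0 \text{ otherwise}.
\]

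Finally, I form $T_\ell$ by tubing the spheres $S_{V^{11}_\ell}, \ldots, S_{V^{11}_m}$ along arcs in $W^{11}_k$ (for $k > \ell$) parallel to a boundary arc of each $W^{11}_k$. These tubing arcs can be pushed into a collar neighbourhood of $\partial W^{11}_k$ inside $W^{11}_k$, away from any interior $V \cap W$ intersections; the tubes, being thin annuli normal to the arcs, thus remain disjoint from $A$, $B$, $V$, and, as normal push-offs of $W^{11}_k$, from every $W^{11}_j$. Distinct $T_\ell$ are made mutually disjoint by nesting the collar neighbourhoods. Therefore
\[
T_\ell \cdot W^{11}_k = \sum_{j=\ell}^m S_{V^{11}_j} \cdot W^{11}_k = \delta_{k\ell},
\]
since for $k<\ell$ no term contributes, for $k=\ell$ only the $j=\ell$ term survives (with value $+1$), and for $k>\ell$ the contributions of $j=k$ and $j=k-1$ cancel. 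The main obstacle is the sign analysis in the second step, which requires a careful local computation in the model of the Whitney sphere; the remainder is a routine general-position argument together with the telescope calculation.
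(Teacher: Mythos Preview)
Your telescoping approach is sound and more explicit than the paper's terse argument, but there is a genuine gap in the second step. You assert that a Whitney disc $W^{11}_j$ can meet $S_{V^{11}_k}$ only near a shared endpoint in $A_1 \cap B_1$. This is false in general. Recall from the local model that the caps $D_{\pm 1}$ of the Whitney sphere each contain two pushed-off copies of $V^{11}_k$. You correctly use this to conclude that $S_{V^{11}_k}$ is disjoint from $V$, but the same observation shows that any interior intersection point of $\mathring{V}^{11}_k$ with $\mathring{W}^{11}_j$ produces four intersection points between $S_{V^{11}_k}$ and $\mathring{W}^{11}_j$. Such interior intersections are exactly what cannot be ruled out at this stage of the proof --- indeed, the entire purpose of constructing the $T_\ell$ is to correct for the possibly nonzero numbers $\mathring{V}^{11}_k \cdot \mathring{W}^{11}_j$.

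The fix is straightforward and is precisely what the paper records: these four extra intersections come in two algebraically cancelling pairs (the two copies of $V^{11}_k$ in each cap $D_{\pm 1}$ are oppositely oriented), so they do not contribute to the algebraic count. Once you insert this observation, your displayed formula for $S_{V^{11}_k} \cdot W^{11}_j$ becomes an algebraic (rather than geometric) statement, and your telescoping computation goes through unchanged. The rest of your argument --- disjointness from $A \cup B \cup V$, mutual disjointness of the $T_\ell$ via nested collars, and the tubes missing $W$ --- is fine.
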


\begin{proof}
    The $\delta_{k\ell}$ terms can be seen in \cref{figure:Wh-discs-with-arc-boundaries}.  Each intersection of $\mathring{V}^{11}_{\ell}$ with $\mathring{W}^{11}_k$ gives rise to four intersections between $S_{{V}^{11}_\ell}$ and $\mathring{W}^{11}_k$. These come in algebraically cancelling pairs, so do not contribute to the intersection numbers in the statement of the lemma.
\end{proof}

We write
\[J_{kp} := - \mathring{V}^{11}_{k} \cdot \mathring{W}^{11}_{p} \in \Z\]
and let \[\Sigma_{kp} := \#^{J_{kp}} T_{p}\] denote $J_{kp}$ mutually disjoint parallel, oriented copies of $T_{p}$, tubed together by annuli  contained in a regular neighbourhood of $T_{p}$ that  are disjoint from $A$, $B$, $V$, and $W$. The spheres $\Sigma_{kp}$ are mutually disjoint, framed, and embedded in the complement of $A \cup B \cup V$.
Note that \[\mathring{W}_{\ell}^{11} \cdot \Sigma_{kp} = \mathring{W}_{\ell}^{11} \cdot \#^{J_{kp}} T_{p} = J_{kp} \delta_{p\ell} =  - (\mathring{V}^{11}_{k} \cdot \mathring{W}^{11}_{p})\delta_{p\ell}.\]
Now we want to create a new family of discs whose interiors have trivial algebraic intersection numbers with the interior of every disc $W^{11}_{\ell}$.
We define
\[\wt{V}^{11}_k := V^{11}_k \#_{p=1}^m \Sigma_{kp}.\]
Note that after a small isotopy, the interiors of discs $\{ \wt{V}^{11}_k\}$ may be assumed to be disjoint from the interiors of  $\{ {V}^{11}_k\}$.
We compute
\begin{align} \label{eq: intersection number}
\begin{split}
    \wt{V}^{11}_k \cdot \mathring{W}^{11}_{\ell} &= \mathring{V}^{11}_k \cdot \mathring{W}^{11}_{\ell} + \sum_{p=1}^m \Sigma_{kp} \cdot \mathring{W}^{11}_{\ell} \\  &=  \mathring{V}^{11}_k \cdot \mathring{W}^{11}_{\ell} - \sum_{p=1}^m (\mathring{V}^{11}_{k} \cdot \mathring{W}^{11}_{p})\delta_{p\ell} \\  &= \mathring{V}^{11}_k \cdot \mathring{W}^{11}_{\ell} -  \mathring{V}^{11}_{k} \cdot \mathring{W}^{11}_{\ell} =0.
    \end{split}
\end{align}
Finally, for each $k$ we define
\[\wh{V}^{ij}_k := \begin{cases}
   \wt{V}^{11}_k & (i,j) = (1,1) \\
   V^{ij}_k & (i,j) \neq (1,1).
\end{cases}\]
That is, we replace the discs associated with the innermost eye, but leave all other discs in $\{V^{ij}_k\}$ unchanged.
As we explain below, the new family of discs  $\{ \wh{V}^{ij}_k\}$ will be used
to factorize the given pseudo-isotopy into two, which we can then trivialize separately.

\begin{lemma}\label{lemma:disjointness-wh-V-discs}
The discs $\wh{V}^{ij}_k$ are pairwise disjoint, framed, and embedded.
\end{lemma}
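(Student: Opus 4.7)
The plan is a case analysis combined with a general-position argument that relies on two structural facts already established above: (i) the original finger discs $\{V^{ij}_k\}$ are pairwise disjoint, framed, and embedded (by the setup in \cref{sec:setting-up}); and (ii) the spheres $\{\Sigma_{kp}\}$ are mutually disjoint, framed, embedded, and contained in the complement of $A \cup B \cup V$ (from the paragraph preceding the lemma).

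When neither $(i,j)$ nor $(i',j')$ equals $(1,1)$, we have $\wh{V}^{ij}_k = V^{ij}_k$ and $\wh{V}^{i'j'}_{k'} = V^{i'j'}_{k'}$, so disjointness, framedness, and embeddedness come directly from (i). The work therefore concerns the modified discs $\wh{V}^{11}_k = V^{11}_k \#_{p=1}^m \Sigma_{kp}$, where each connect sum is realised ambiently by tubing $V^{11}_k$ to $\Sigma_{kp}$ along a framed arc whose endpoints lie one on $V^{11}_k$ and one on $\Sigma_{kp}$, then thickened to an annular tube.

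The key step is to choose these connect-sum arcs (one for each pair $(k,p)$) generically so that: the arcs are pairwise disjoint; each arc meets $V^{11}_k \cup \Sigma_{kp}$ only at its two endpoints; and every arc is disjoint from all other $V^{i'j'}_{k'}$, from all other $\Sigma_{k'p'}$, and from $A \cup B$. Since each arc is $1$-dimensional and the surfaces to be avoided are $2$-dimensional in a $4$-manifold, codimension arguments make arcs generically disjoint from the surfaces ($1 + 2 < 4$) and pairwise disjoint ($1 + 1 < 4$). The annular tubes around each arc can then be taken sufficiently thin by compactness: an arc is disjoint from each (closed) surface it avoids, so they are separated by a positive distance, and a thin enough tubular neighbourhood of the arc remains disjoint from those surfaces and from the other tubes.

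With these generic choices in place, each $\wh{V}^{11}_k$ is a framed connect sum of a framed embedded disc with framed embedded spheres along framed tubes, hence is itself framed and embedded. Pairwise disjointness of the full family $\{\wh{V}^{ij}_k\}$ then follows: for $k \neq k'$ the original $V^{11}_k, V^{11}_{k'}$ are disjoint, the $\Sigma$'s are mutually disjoint by (ii), and the tubes are disjoint by the general-position choice; and for $\wh{V}^{11}_k$ against $\wh{V}^{ij}_{k'} = V^{ij}_{k'}$ with $(i,j) \neq (1,1)$, we use that $V^{11}_k$ is disjoint from $V^{ij}_{k'}$ by (i), that $\Sigma_{kp}$ lies in the complement of $V$ by (ii), and that the tubes avoid $V^{ij}_{k'}$ by the general-position choice. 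The only real obstacle is the simultaneous orchestration of all the general-position choices for arcs and tubes, which is handled by a sequential perturbation using the codimension estimates above.
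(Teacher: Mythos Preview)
Your proof is correct. Both arguments are at heart general-position arguments in a 4-manifold, but the paper organises things somewhat differently: rather than choosing generic tubing arcs, it observes that by construction each $\wt{V}^{11}_k$ already lives inside a fixed regular neighbourhood of $V^{11}$ together with the arcs $\partial W^{11}\cap A_1$ on $A_1$ (since the Whitney spheres sit in a neighbourhood of $V^{11}$, the tubes defining $T_\ell$ run along boundary-parallel arcs in $W^{11}$, and the $\Sigma_{kp}$ are parallel copies). Quinn's arc condition then guarantees this neighbourhood can be taken disjoint from the boundaries of the other $V^{ij}$ discs, and the $V^{ij}$ themselves can be assumed to miss a regular neighbourhood of $V^{11}$. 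Your direct codimension argument for the arcs is equally valid and arguably cleaner for this lemma in isolation; the paper's regular-neighbourhood description, however, has the side benefit of explicitly locating $\wt{V}^{11}$ near $A_1\cup B_1\cup V^{11}$, a fact that is exploited later in the topological argument (\cref{section:fixing-the-proof-topologically}) when applying the Alexander trick.
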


\begin{proof}
Each of the $\wt{V}^{11}_k$ discs lives in a regular neighbourhood of the discs $V^{11}$, together with arcs on $A_1$, say, given by $\partial W^{11} \cap A_1$.  Since the interiors of the boundary arcs of $V^{11}$ and of $W^{11}$ are disjoint by Quinn's arc condition, and since the $V^{ij}$ discs can be assumed to miss a regular neighbourhood of $V^{11}$, we do not create extra intersections.  Also note that the spheres~$\Sigma_{kp}$ are framed, mutually disjoint, disjoint from all $\{ {V}^{ij}_k \}$, and embedded. Hence tubing into them again yields framed and embedded finger discs.
\end{proof}

Next we will use the freedom to introduce additional $S^2\times S^2$ summands
by stabilising  the pseudo-isotopy with $(\#^m S^2 \times S^2) \times [0,1]$. In particular such an operation introduces $m$ additional $ S^2 \times S^2$ summands into the middle-middle level.
This paragraph is specific to the stable proof.
The crucial feature of $\wh V^{11}$ is that the algebraic intersection numbers between the components of $\wh V^{11}$ and the components of $W^{11}$ are all zero. We are free to modify $\wh V^{11}$ as long as they continue satisfying this condition and \cref{lemma:disjointness-wh-V-discs} continues to be satisfied. We take $\wh V^{11}$ as defined above,  %\cref{section:fixing-the-proof},
and tube each component into $S^2\times \{\pt\}$ in its own newly added $S^2\times S^2$ summand. Since $\wh V^{11}$ consists of $m$ discs, we add $m$ $S^2 \times S^2$ summands.  Now, in the stabilised middle-middle level,  the entire collection of discs $\wh V^{11}$ has a geometrically dual collection of framed embedded spheres that are disjoint from $A \cup B \cup W \cup (V \sm V^{11})$.

\begin{remark}\label{remark:separate-proof}
    We have structured the proof so that the steps that use stabilisation by copies of $(S^2 \times S^2) \times [0,1]$ are separated from the rest of the proof. This is a device to avoid having to repeat text verbatim during the topological proof in \cref{section:fixing-the-proof-topologically}.
\end{remark}

Now we perform \emph{factorisation}~\cite{Gabai-22}*{Lemma~3.15}.  That is, we introduce \emph{ex nihilo} a cancelling finger-Whitney pair. To explain this we introduce some notation. If we have a finger disc $V_k$, and we use it as a Whitney disc, then we write it as $\ul{V_k}$, to indicate the same disc in the middle-middle level, but with its \emph{modus operandi} altered.  Analogously if we have a Whitney disc $W_{\ell}$ that we wish to use as a finger disc, we write it as $\ul{W_{\ell}}$.

Another way to think about this is to consider two families of discs in the middle level at time~$1/2$. Whitney moves describe the motion of $B$ forward in time, and finger discs describe the motion of $B$ backwards in time.  Underlining a disc indicates reversing the time direction associated with that disc, i.e.\ reversing whether it is considered as a finger or Whitney disc.

As indicated above, the plan is to introduce a trivial family in the middle of the family.  We have a family where finger moves corresponding to the discs $V$ occur at $t_f< 1/2$, and then Whitney moves occur using the discs $W$ at time $t_w > 1/2$.   By a deformation, we may and shall assume that the spheres $A^t$ and $B^t$ are constant during the time interval $[t_f,t_f + 4\varepsilon]$, where $\varepsilon \ll 1/2 - t_f$.

Consider a new family, which proceeds as before until $t_f$.
Then shortly after $t_f$, at time $t_f + \varepsilon$ say, Whitney moves are performed to remove all $A$, $B$  intersections using Whitney discs $\ul{\wh{V}}$.  Then at time $t_f + 3\varepsilon$, these Whitney moves are reversed by finger moves corresponding to the discs $\wh{V}$.  Then for time $t> 1/2$ the family is again as before: at the time $t_w$ the Whitney moves  guided by the discs $W$ are performed, and from time $t_d$ onward the critical points are cancelled.   This new family is related to the original family by a deformation, in which the Whitney moves, and their subsequent undoing, are performed progressively less and less.

We have now accomplished factorisation.  We pass from the finger-Whitney configuration $V\cdot W$ to the configuration $V \cdot \ul{\wh{V}} \cdot \wh{V} \cdot W$.

Note that at times $t_f + \varepsilon<t<t_f + 3 \varepsilon$ the spheres $A_i, B_j$ intersect geometrically in $\delta_{ij}$ points.
Now, deform the family so that all the index 2 and 3 critical points are cancelled at time $t_f + 2 \varepsilon$.

The outcome is two nested eye families.
First we consider the left hand family of nested eyes.  If $(i,j) \neq (1,1)$, then all finger moves involving $A_i$ and $B_j$ are undone with exactly the same disc, acting as a Whitney disc.  Thus we can deform the family to eradicate all of these extra intersections from ever occurring.

\begin{lemma}\label{lemma:innermost-outermost}
    In a family of nested eyes, if there are no handle slides, i.e.\ no $2/2$ nor $3/3$ trajectories, then we can perform a deformation via bigon moves, beak moves, and their inverses, to move the innermost eye  outermost.
\end{lemma}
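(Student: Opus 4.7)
The plan is to proceed by induction on $n$, reducing the statement to a single swap of two adjacent nested eyes. The base case $n=1$ is vacuous, and for $n \geq 2$ it suffices to show that an adjacent pair $E_i \subset E_{i+1}$ of nested eyes can be swapped by an allowed deformation; iterating this swap $n-1$ times then moves the originally innermost eye $E_1$ out through each of the eyes $E_2,\ldots,E_n$ until it occupies the outermost position. Throughout I would work locally in a neighbourhood of the Cerf graphic containing just the two eyes under consideration, so that the remaining eyes, being either nested strictly further inside or outside $E_i \cup E_{i+1}$ or separated in time, are not affected.

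For the adjacent swap, the crucial geometric input is the hypothesis that there are no $2/2$ or $3/3$ trajectories, which implies that the pair of critical points comprising $E_i$ is algebraically independent from the pair comprising $E_{i+1}$, at every time in their common lifespan. Using this, the swap would be achieved in two symmetric substeps: first, pulling the birth cusp of $E_i$ leftward past the birth cusp of $E_{i+1}$, and then pushing the death cusp of $E_i$ rightward past the death cusp of $E_{i+1}$. Each substep is to be realised by a sequence of beak moves, which introduce and subsequently cancel auxiliary birth-death cusps on the critical value curves, together with bigon moves, which are transverse crossings of critical value curves in the Cerf graphic. These crossings are permissible precisely because the hypothesis rules out the accompanying handle slides that would otherwise obstruct passing two critical value curves through each other.

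The step I expect to be the main obstacle is verifying that the intermediate configurations produced by these beak and bigon moves can be endowed with gradient-like vector fields having no $2/2$ or $3/3$ trajectories. Equivalently, one must check that the codimension-one events passed through during the deformation remain in the allowable class and do not force the creation of handle slides. This reduces to a transversality argument: because the handle pairs of $E_i$ and $E_{i+1}$ are algebraically independent and the deformation is supported in a small neighbourhood of $E_i \cup E_{i+1}$ in the Cerf graphic, one can perturb the gradient-like vector fields $\xi_t$ locally near the relevant critical points to avoid trajectories between an index $2$ critical point of $E_i$ and an index $2$ critical point of $E_{i+1}$ (and similarly for index $3$), all along the deformation. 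Once this is established, the bigon and beak moves go through and deliver the desired swap.
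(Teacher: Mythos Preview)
Your proposal is correct and follows essentially the same approach as the paper: reduce to swapping an adjacent pair of nested eyes, and realise that swap by a sequence of beak and bigon moves (and their inverses), with the no-handle-slides hypothesis guaranteeing the bigon moves go through. The paper's proof is briefer, simply exhibiting the sequence of moves in a figure for the two-eye case and remarking that there are no obstructions because by hypothesis there are no trajectories between critical points of the same index in different eyes; your extra caution about intermediate configurations is not misplaced but is not elaborated in the paper, which treats this as a standard fact in Cerf theory.
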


\begin{proof}
    This is a standard fact in Cerf theory.
It follows from the sequence of beak moves, bigon moves, and their inverses shown in \cref{figure:Cerf-moves-switching-nesting}, in the case of two eyes.  We illustrated the exchange move in Lemma \ref{lemma:innermost-outermost} in the case of two eyes, but it can also be applied to move the innermost eye past an arbitrary number of eyes, to make it outermost. There are no obstructions to performing these moves because by hypothesis there are no trajectories between critical points in different eyes of the same index.
\begin{figure}[ht]
  \centering
\includegraphics[height=8.5cm]{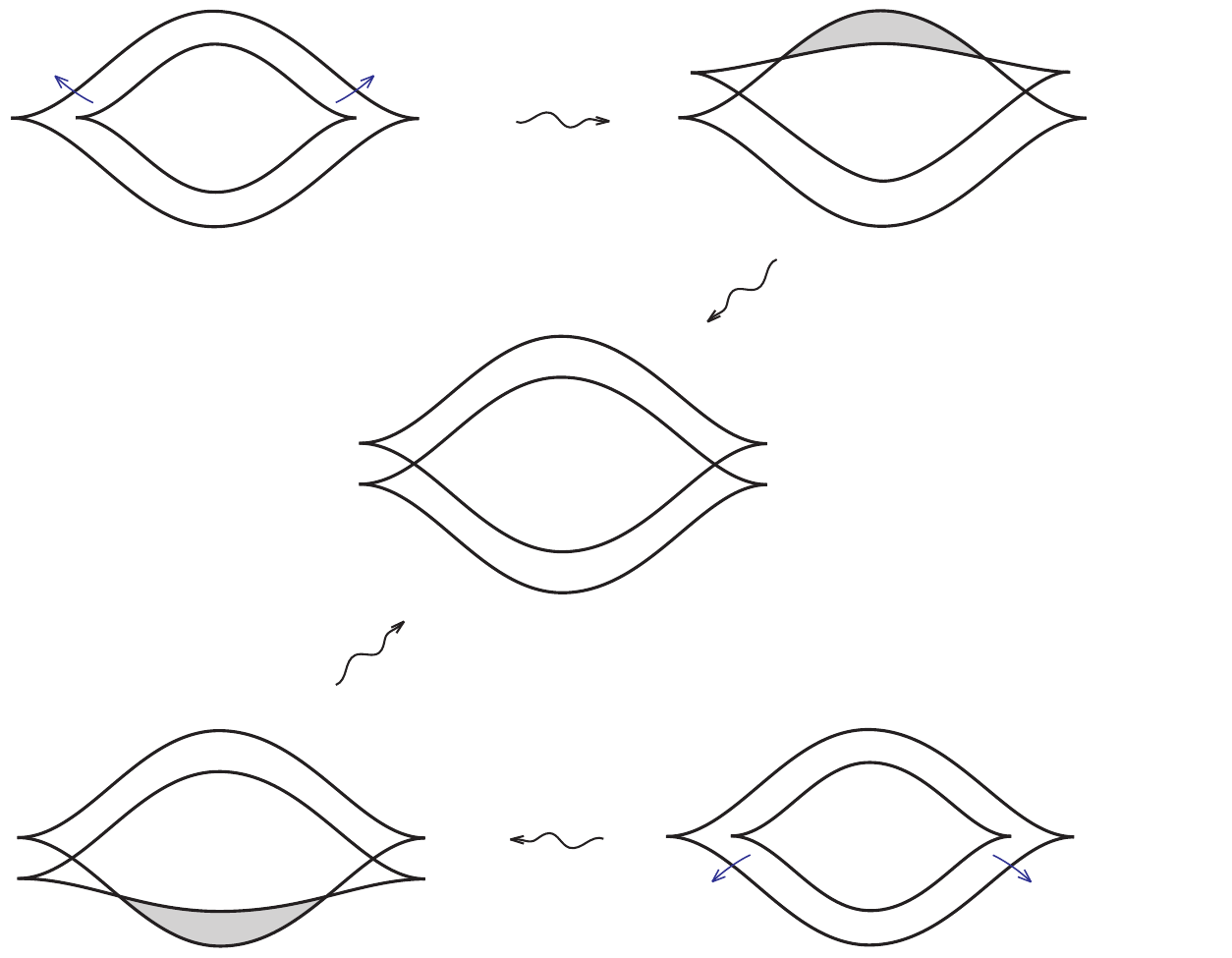}
{\scriptsize
\put(-187, 39){beak move}
\put(-187, 220){beak move}
\put(-118, 160){bigon move}
\put(-265, 80){bigon move}
}
\caption{A sequence of Cerf moves that switch the order of nesting of two concentric eyes. }
\label{figure:Cerf-moves-switching-nesting}
\end{figure}
\end{proof}

Applying \cref{lemma:innermost-outermost}, we now remove all eyes apart from the previous innermost eye.
We move the innermost eye outermost, and then all remaining eyes have a unique intersection for all time, i.e.\ $|A^t_i \pitchfork B^t_i| = 1$ for all $t \in [t_b,t_d]$. Hence they can be removed by the case $m=k=1$ of \cite{HW}*{Chapter~V, Proposition~1.1}.

In the right hand nested eye family, by equation~\eqref{eq: intersection number} the finger and Whitney discs for the innermost eye satisfy that the interiors of $\wt{V}^{11}_k$ and $W^{11}_{\ell}$ intersect algebraically trivially, for all $k$, $\ell$.  Therefore the innermost eye of the right hand nested eyes can be removed by \cref{thm:quinn-alg-ints-implies-close-eye-smooth-stable-case}.
We are left with the configuration shown in \cref{figure:From-VW-to-VW’W’W}.
The right hand nested eye family now has $(n-1)$ nested eyes, and so by \cref{ind-hyp}, all of these $(n-1)$ eyes can be removed.

\begin{figure}[ht]
\centering
\includegraphics[height=2.75cm]{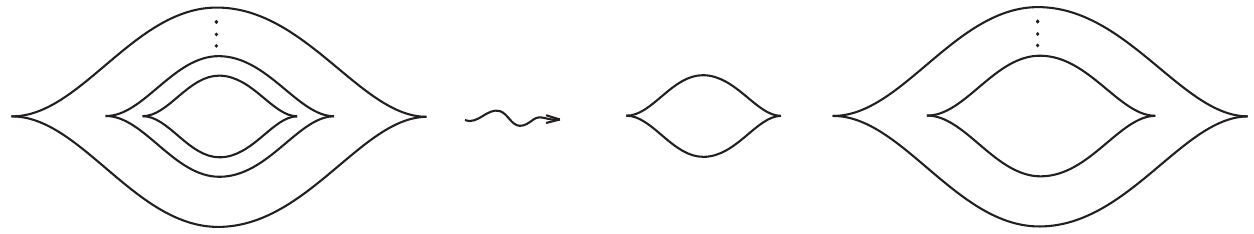}
{\scriptsize
\put(-189,30){$A_1$}
\put(-189,44){$B_1$}
\put(-353,30){$A_1$}
\put(-353,44){$B_1$}
\put(-380,26){$A_2$}
\put(-380,50){$B_2$}
\put(-380,5){$A_n$}
\put(-380,71){$B_n$}
\put(-105,26){$A_2$}
\put(-105,50){$B_2$}
\put(-105,5){$A_n$}
\put(-105,71){$B_n$}
}
\caption{A deformation of the family leads to a modification of the Cerf graphic as shown. The finger/Whitney discs change via factorisation from $V\cdot W$ to $V\cdot \ul{\wh V}\cdot \wh V\cdot W$.
%Since $\wh V\cdot W$ have algebraically cancelling intersection we can apply \cref{thm:quinn-alg-ints-implies-close-eye} to close the innermost eye, leading to the right-most nested eyes.
}
\label{figure:From-VW-to-VW’W’W}
\end{figure}

\begin{remark}
The remainder of the proof is specific to the smooth stable case.
\end{remark}

It remains to consider the innermost eye in the left hand family. This is characterised by the collection of (finger, Whitney) discs $(V^{11}, \underline{\wh V}^{11})$.
Here, since $V^{11}$ and $\underline{\wh V}^{11}$ are disjoint from $\bigcup_{i=2}^n (A_i \cup B_i)$, they determine discs in the left hand family's middle-middle level $M \# S^2 \times S^2$, which by abuse of notation we continue to denote with the same symbols.
The unions of the boundaries of $V^{11}$ and $\underline{\wh V}^{11}$  form circles rather than arcs (after a small push of the boundary of $\wh{V}^{11}$ to make the boundaries disjoint).

We would like to convert $(V^{11}, \underline{\wh V}^{11})$ into
finger, Whitney discs satisfying Quinn's arc condition. We will achieve this using the sum square move (\cref{subsection:sum-square}).  First introduce a trivial arc-pair $V_0$, $W_0$ and rename $(V_i, W_i):=(V_{i}^{11}, \underline{\wh V_i}^{11})$, for $i \geq 1$.
Abusing notation, again denote $V:=\cup_i V_i, W:=\cup_i W_i$. We will use sum squares $S_i$ with boundaries on $V_0$ and $V_i$, for each $i \geq 1$, as indicated in Figure \ref{figure:sum square boundaries}.

\begin{figure}[h]
\centering
\includegraphics[height=3cm]{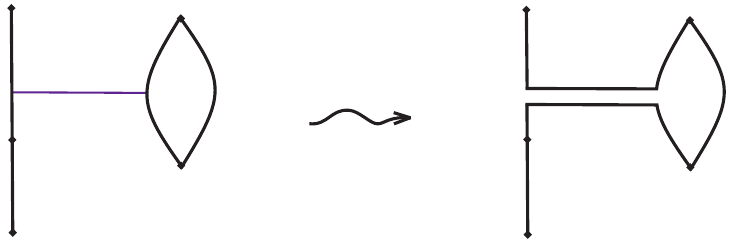}
\put(-250,80){$+$}
\put(-250, 34){$-$}
\put(-250,0){$+$}
\put(-193,80){$-$}
\put(-193, 20){$+$}
{\small
\put(-270,60){$\partial V_0$}
\put(-235,43){$\partial S$}
\put(-273,18){$\partial W_0$}
\put(-219,63){$\partial V_i$}
\put(-181,63){$\partial W_i$}
}
\caption{Rearranging the boundaries of the finger and Whitney discs, on $A$ and on $B$, using the sum square move.}
\label{figure:sum square boundaries}
\end{figure}

Following Quinn~\cite{Quinn:isotopy}*{Sections~4.2~and~4.3}, make the sum square $S_i$  framed and disjoint from~$A$, $B$, and $V$, using spheres dual to $A$ and $B$, that are disjoint from $V$ but intersect $W$.
We add more copies of $S^2\times S^2$, one for each $i$, and pipe each $S_i$ into its own $S^2\times \{\pt\}$.

Now the $S_i$ and the  $W_j = \ul{\wh{V}_j}^{11}$ have a collection of framed embedded geometrically dual spheres, disjoint from everything else and each other,  arising from the new $S^2 \times S^2$ summands.
(See the paragraph following the proof of Lemma \ref{lemma:disjointness-wh-V-discs} for the discussion of dual spheres to $W_j$.)
 Resolve all $S_i$, $W_j$ intersections by tubing $S_i$ into parallel copies of the dual spheres to the $W_j$.  Resolve any self-intersections of $S_i$ by tubing $S_i$ into parallel copies of the dual spheres to $S_i$.  This produces  embedded squares $S_i$ with interiors disjoint from $A \cup B \cup V \cup W$, and correctly framed. Performing the sum square moves creates a family satisfying Quinn's arc condition.  Since $V$ and $\wh{V}$ had disjoint interiors, by the sentence preceding equation~\eqref{eq: intersection number}, the discs in the new family after the sum square move also all have pairwise disjoint interiors.

This allows us to stably, smoothly trivialize the pseudo-isotopy given by $(V, \underline{\wh V})$.
Thus the remaining eye, the formerly innermost eye in the left hand family, can also be removed.
This completes the proof of the inductive step, and hence completes the proof of \cref{thm:PI-stable-I}.
\qed

\section{A decomposition theorem for topological 4-manifolds}
\label{sec:Perron}

We prove a decomposition lemma (\cref{lemma:perron-7-2-plus-details}) for compact 1-connected topological 4-manifolds.  The proof is an adaptation of the proofs in \cite{Perron}*{Section~7} which Perron attributes to Siebenmann.  The argument given there is for closed 1-connected 4-manifolds, and  relies on the Freedman-Quinn classification of 1-connected  closed topological 4-manifolds.
 We explain how this argument can be adapted to compact 1-connected 4-manifolds, with nonempty boundary, using Boyer's classification~\cites{Boyer86,Boyer93} of compact 1-connected topological 4-manifolds.
This has two aims.
\begin{enumerate}[(i)]
    \item\label{aim-i} \cref{thm:main-PI-I} stated by Quinn is more general than the result of Perron, because it encompasses all compact 1-connected topological 4-manifolds with nonempty boundary. Perron's result in the case of nonempty boundary has the assumption that the 4-manifold is smooth and has a handle decomposition with no 1-handles.  We clarify that  \cref{thm:main-PI-I} can be deduced from Perron's result, using the decomposition lemma proven in this section.
    \item\label{aim-ii} The first step of Quinn's proof of \cref{thm:main-PI-I} uses a decomposition result, \cite{Quinn:isotopy}*{Proposition~1.5}. The proof given in this section, following Siebenmann's argument, seems to us to be easier than Quinn's proof in \cite{Quinn:isotopy}*{Section~6} of his Proposition 1.5, so this represents a simplification of Quinn's proof.
\end{enumerate}

\begin{remark}
    Both Perron and Quinn remark that \cref{thm:main-PI-I} might follow from \cite{Perron}*{Theorem~0} and \cite{Quinn:isotopy}*{Proposition~1.5}, via the argument of \cite{Perron}*{Section~7}.  However, \cite{Quinn:isotopy}*{Proposition~1.5} allows 1-handles, and so we do not believe this is the case.  Boyer's classification \cites{Boyer86,Boyer93}, which was completed after \cite{Quinn:isotopy}, avoids 1-handles, as stated in \cref{thm:boyer} below.
\end{remark}

We will make use of the following result of Boyer~\cites{Boyer86, Boyer93}; see in particular \cite{Boyer93}*{p.~35}.

\begin{theorem}[Boyer]\label{thm:boyer}
    Let $M$ be a 1-connected 4-manifold with connected nonempty boundary~$Y$. Then $M$ is homeomorphic to a manifold of the form $Y \times [0,1] \cup H \cup C$, where $H$ consists of finitely many 2-handles attached to $Y \times \{1\}$, such that  $\partial(Y \times [0,1] \cup H) = Y \times \{0\} \sqcup \Sigma$, where $\Sigma$ is a $\Z$-homology 3-sphere, and  $C$ is a compact, contractible 4-manifold with $\partial C \cong \Sigma$.
\end{theorem}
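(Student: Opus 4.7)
The plan is to follow Boyer's approach in \cite{Boyer86, Boyer93}, which produces $C$ as the complement in $M$ of a handlebody built from $Y$ by attaching $2$-handles. The main ingredients are Quinn's existence theorem for topological handle decompositions of compact $4$-manifolds with nonempty boundary, the cancellation of $1$-handles in the simply-connected topological category via Freedman's embedded disk theorem, and a homological computation identifying the complement.

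First I would invoke Quinn's theorem to obtain a handle decomposition of $M$ relative to $Y \times \{0\}$; since $Y$ is connected, it may be taken to have no $0$-handles. Because $\pi_1(M) = 1$, each $1$-handle can be cancelled against a $2$-handle by a Whitney move, whose required Whitney disk is supplied by Freedman's embedded disk theorem. This yields $M \cong (Y \times [0,1]) \cup H \cup K$, with $H$ a union of $2$-handles and $K$ a union of $3$- and $4$-handles. Set $\Sigma := \partial(Y \times [0,1] \cup H) \setminus (Y \times \{0\})$ and $C := K$, regarded as attached to $\Sigma$; turned upside down, $C$ admits a handle decomposition built from $\Sigma$ using only $0$- and $1$-handles (dual to the $4$- and $3$-handles of $K$).

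It remains to arrange, by handle slides and, if needed, the introduction of cancelling $2/3$ pairs, that the attaching data for the $2$-handles of $H$ realises a basis of $H_2(M)$, and that the dual $0$- and $1$-handle decomposition of $C$ can be reduced to one whose underlying $1$-skeleton is a tree. Once this is achieved, a Mayer--Vietoris computation on $M = (Y \times [0,1] \cup H) \cup_{\Sigma} C$, combined with Poincar\'e--Lefschetz duality and the vanishing of $H_1(M)$, $H_3(M)$, and $H_4(M)$, shows that $H_*(\Sigma) \cong H_*(S^3)$ and that $C$ has the homology of a point; a van Kampen argument, using $\pi_1(M) = 1$, then gives $\pi_1(C) = 1$ and hence $C$ is contractible by Whitehead's theorem. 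The main technical obstacle is arranging the $\Z$-homology sphere condition on $\Sigma$ and the contractibility of $C$ simultaneously, which requires the full force of the detailed handle manipulations and Freedman's classification theory underlying Boyer's work in \cite{Boyer93}.
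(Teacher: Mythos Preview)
The paper does not prove this statement; it is quoted as Boyer's theorem with a reference to \cite{Boyer93}*{p.~35}. Boyer's actual argument is not the direct handle manipulation you outline. He first builds a \emph{model}: choose a framed link in $Y\times\{1\}$ whose linking/framing data realises the intersection form of $M$, attach $2$-handles along it to obtain $W=Y\times[0,1]\cup H$, check by a homological computation that the new boundary component $\Sigma$ is a $\Z$-homology sphere, cap $\Sigma$ with a contractible $4$-manifold $C$ supplied by Freedman's theorem, and finally invoke his classification theorem for simply-connected $4$-manifolds with fixed boundary to conclude that $M$ is homeomorphic to $W\cup_\Sigma C$. The point is that the decomposition is produced abstractly and then matched to $M$ by classification, not by rearranging a given handle structure on $M$.

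Your approach has a genuine gap. Once you set $C:=K=$ (the $3$- and $4$-handles), the upside-down description makes $C$ a $4$-dimensional $1$-handlebody $\natural^k(S^1\times D^3)$, which is contractible only when $k=0$, i.e.\ $C\cong D^4$ and $\Sigma\cong S^3$. Your proposed fix, reducing the dual $1$-skeleton of $C$ to a tree, is exactly the assertion that all $3$-handles can be cancelled. This is false in general: take $M$ to be a contractible $4$-manifold with $\partial M=Y$ a nontrivial homology sphere (e.g.\ a Mazur manifold). A relative handle decomposition has one $2$-, one $3$-, and one $4$-handle; your split gives $H=h^2$ and $C=h^3\cup h^4\cong S^1\times D^3$, not contractible. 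Cancelling $h^2$ against $h^3$ would leave $M\cong Y\times I\cup h^4$, forcing $Y\cong S^3$, a contradiction. So the $3$-handle cannot be cancelled, even topologically, and $C$ cannot be made into $D^4$. The van Kampen step is also invalid as written: from $\pi_1(M)=\pi_1(W)\ast_{\pi_1(\Sigma)}\pi_1(C)=1$ one cannot deduce $\pi_1(C)=1$. Boyer's route avoids both issues because $C$ is \emph{chosen} to be contractible from the outset, and the identification with $M$ comes from the classification theorem rather than from handle moves; deferring that step back to \cite{Boyer93} is then circular.
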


Since Boyer's result requires that $\partial M = Y$ is connected, first we dispense with the case that $\partial M$ is disconnected.

\begin{lemma}\label{lemma:disconnected-bdy}
    Suppose that every pseudo-isotopy is isotopic rel.\ $\sqsubset$ to the identity for  1-connected compact 4-manifolds with connected boundary. Then every pseudo-isotopy is isotopic rel.\ $\sqsubset$ to the identity for all 1-connected compact 4-manifolds.
\end{lemma}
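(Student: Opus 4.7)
My plan is to induct on the number $k$ of connected components of $\partial M$; the base cases $k \leq 1$ are covered by the hypothesis. For $k \geq 2$, I would select an embedded arc $\gamma \subseteq M$ with endpoints $p_1 \in Y_1$ and $p_2 \in Y_2$ in two distinct components of $\partial M$ and with $\mathring{\gamma} \subseteq \mathring{M}$, and let $N$ be a closed regular neighborhood of $\gamma$, so that $N$ is a topological 4-ball meeting $\partial M$ in two disjoint 3-disks $D_1 \subseteq Y_1$ and $D_2 \subseteq Y_2$. Setting $M' := M \sm \mathring N$, a van Kampen argument using that $M' \cap N = \partial N \sm (\mathring D_1 \cup \mathring D_2) \cong S^2 \times [0,1]$ is connected and simply connected shows that $M'$ is 1-connected with boundary $(Y_1 \# Y_2) \sqcup Y_3 \sqcup \cdots \sqcup Y_k$, which has $k - 1$ components.

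The core step is to isotope the given pseudo-isotopy $F$ rel $\sqsubset$ to one that equals the identity on $N \times [0,1]$. Granted this, the restriction $F|_{M' \times [0,1]}$ is a pseudo-isotopy of $M'$, to which the inductive hypothesis applies; extending the resulting isotopy to the identity by the identity on $N \times [0,1]$ is valid because the two pieces agree on their common 4-manifold $A \times [0,1]$, where $A := \partial N \sm (\mathring D_1 \cup \mathring D_2) \cong S^2 \times [0,1]$, since $A \times [0,1] \subseteq \partial M' \times [0,1]$ is part of $\sqsubset_{M'}$. Composing the two isotopies yields the desired isotopy of $F$ to the identity rel $\sqsubset$ of $M$.

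To carry out the core step, I would proceed in three substeps. First, using 1-connectedness of $M$ (which, since $\dim M \geq 3$, implies that any two proper arcs in $M$ with common endpoints are ambient isotopic rel $\partial M$) together with uniqueness of regular neighborhoods of arcs, I would ambient isotope the top-level homeomorphism $f := F|_{M \times \{1\}}$ rel $\partial M$ so that $f|_N = \Id_N$, and promote this to an isotopy of the pseudo-isotopy $F$ rel $\sqsubset$ via the standard collar trick in $M \times [0,1]$. Second, isotope $F$ further rel $\sqsubset$ so that $F|_{A \times [0,1]} = \Id$. The two proper codimension-one embeddings of $A \times [0,1] \cong S^2 \times D^2$ into $M \times [0,1]$, namely $F|_{A \times [0,1]}$ and the inclusion, already agree on $\partial(A \times [0,1]) \subseteq \partial(M \times [0,1])$; I would show they are isotopic rel boundary through proper embeddings, and then invoke topological isotopy extension. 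Third, $F|_{N \times [0,1]}$ is then a self-homeomorphism of the 5-ball $N \times [0,1]$ equal to the identity on the entire 4-sphere $\partial(N \times [0,1])$, so the topological Alexander trick supplies an isotopy to the identity rel $\partial(N \times [0,1])$, which extends by the identity on the complement of $N \times [0,1]$ in $M \times [0,1]$.

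The main obstacle is the second substep: verifying that $F|_{A \times [0,1]}$ and the inclusion are isotopic rel boundary through proper, locally flat embeddings of $S^2 \times D^2$ into the simply-connected 5-manifold $M \times [0,1]$. Since $A \times [0,1]$ has trivial normal bundle with a preferred product structure coming from a bicollar of $\partial N$ in $M$, and since $M \times [0,1]$ is 1-connected so that the relevant codimension-one $\pi_1$-obstructions vanish, the existence of such an isotopy should follow from a standard application of topological isotopy extension combined with the uniqueness of bicollar neighborhoods.
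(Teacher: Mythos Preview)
Your overall strategy—drill out a tubular neighbourhood of an arc joining two boundary components and reduce the number of components—is the same as the paper's. The paper chooses arcs $\gamma_2,\dots,\gamma_n$ from $Y_1$ to each other component and handles them all at once rather than inducting, but that difference is cosmetic.

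The genuine gap is substep 2. You are attempting to straighten $F$ on the codimension-\emph{one} submanifold $A \times [0,1] \cong S^2 \times D^2$ of $M \times [0,1]$, and you justify this by invoking ``uniqueness of bicollar neighbourhoods''. That theorem compares two bicollars of a \emph{single fixed} hypersurface; here $F(A\times[0,1])$ and $A\times[0,1]$ are a priori two different properly embedded hypersurfaces that merely share a boundary, so bicollar uniqueness does not apply. To show they are isotopic rel boundary you would first have to produce a homotopy rel boundary (with obstructions to check in $\pi_2(M)$ and $\pi_4(M)$, since $(S^2\times D^2)/(S^2\times S^1)\simeq S^2\vee S^4$), and then upgrade that homotopy to an isotopy in codimension one, where general position and Whitney-trick methods are unavailable. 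None of this is carried out, and ``isotopy extension'' only \emph{extends} an isotopy you already have—it does not manufacture one.

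The paper sidesteps this entirely by working with the arc itself rather than with $A$. It regards $F(\gamma\times[0,1])$ as a concordance of arcs in $M$, uses that for arcs in a simply connected $4$-manifold concordance $\Rightarrow$ homotopy $\Rightarrow$ isotopy, and thereby isotopes $F$ so that it is the identity on $\gamma\times[0,1]$ and then on an open neighbourhood of it. This is a codimension-$3$ straightening in the $5$-manifold $M\times[0,1]$, where general position does all the work. You can repair your argument by replacing substeps 2 and 3 with exactly this: straighten $F$ on $\gamma\times[0,1]$ rather than on $A\times[0,1]$, then pass to $N\times[0,1]$ by uniqueness of tubular neighbourhoods in codimension $\geq 3$. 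Your substep 1 and the Alexander trick then become unnecessary.
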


\begin{proof}
    Let $M$ be a 1-connected compact 4-manifold with boundary components $Y_1,\dots,Y_n$. Let $F \colon M \times [0,1] \to M \times [0,1]$ be a pseudo-isotopy and let $f:= F|_{M \times \{1\}}$. Choose pairwise disjoint locally flat embedded arcs $\gamma_2,\dots,\gamma_{n}$ in $M$, such that for each $\gamma_i$, one endpoint is on $Y_1$ and the other endpoint is on $Y_i$. The image $F(\gamma_i \times [0,1])$ is a concordance from $\gamma_i$ to $f(\gamma_i)$. Observe that (forgetting the $[0,1]$ coordinate in $M \times [0,1]$) concordance implies homotopy, and homotopy implies isotopy for arcs in a 4-manifold, so $F$ is isotopic to a homeomorphism, again denoted $F$, such that $F(\gamma_i \times [0,1])$ is the trace of an isotopy, for each $i$.  By a further isotopy we can assume $F$ fixes each $\gamma_i \times [0,1]$ pointwise, and in fact fixes an open neighbourhood of each $\gamma_i \times [0,1]$ pointwise, $i=2,\dots,n$.  Removing these neighbourhoods and restricting $F$ to their complement yields a pseudo-isotopy $F|$ of a 1-connected 4-manifold with connected boundary.  By hypothesis $F|$ is isotopic rel.\ $\sqsubset$ to the identity.  Extending this isotopy over the removed neighbourhoods with the identity map yields an isotopy of the original $F$ to the identity of $M \times [0,1]$, rel.\ $\sqsubset$.
\end{proof}

From now on we assume that $\partial M =Y$ is connected.  The following Decomposition Lemma and its proof are modelled on \cite{Perron}*{Lemma~7.2}. We check that by invoking Boyer's classification  the proof extends to the case of nonempty boundary, providing some necessary details.
The statement is closely related to \cite{Quinn:isotopy}*{Proposition~1.5}.

\begin{lemma}[Decomposition Lemma] \label{lemma:perron-7-2-plus-details}
    Assume that $M^4 = Y \times [0,1] \cup H \cup C$ is as in \cref{thm:boyer} and that $\partial M = Y$ is connected.
    The contractible manifold $C$ is contained in the interior of a topologically embedded 4-ball $D^4 \subseteq M$.  It follows that we can write $M$ as $M = Y \times [0,1] \cup H \cup D^4$, where $H$ consists of finitely many 2-handles attached to $Y \times \{1\}$.
\end{lemma}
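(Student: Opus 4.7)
The strategy follows the Perron-Siebenmann argument of \cite{Perron}*{Section~7}, adapted from the closed case to the case with boundary using Boyer's theorem (\cref{thm:boyer}). The plan is to invert the contractible cap $C$ inside $M$ using Freedman's classification of closed 1-connected topological 4-manifolds, producing a topological 4-ball that contains $C$ in its interior. Concretely, since $C$ is a compact contractible topological 4-manifold with $\partial C = \Sigma$ a $\Z$-homology 3-sphere, the double $DC := C \cup_\Sigma (-C)$ is a closed 1-connected topological 4-manifold with trivial intersection form, hence $DC \cong S^4$ by Freedman's classification. Choosing a small open topological 4-ball $\mathring B$ inside the complementary half $-C \subset DC$ and setting $K := -C \setminus \mathring B$, one obtains a compact simply-connected topological cobordism from $\Sigma$ to $S^3$, with $C \cup_\Sigma K = DC \setminus \mathring B \cong D^4$; this realises $C$ inside a topological 4-ball abstractly.

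The technical heart of the argument is to realise the cobordism $K$ as a locally flat topological submanifold of $N := M \setminus \mathring C = Y \times [0,1] \cup H$, with $\Sigma \subset \partial K$ sent to $\Sigma \subset \partial N$ by the identity. I would construct such an embedding using Freedman's topological surgery machinery: since $K$ is simply-connected, it admits a topological handle decomposition on a collar of $\Sigma$ with only 2- and 3-handles, and these handles embed in $N$ by Freedman's disc embedding theorem, exploiting the structure provided by the Boyer decomposition to guarantee the required simply-connected complements. With such an embedding fixed, $D^4 := C \cup_\Sigma K \subseteq M$ is a compact contractible topological 4-manifold with $\partial D^4 = S^3$, hence homeomorphic to the standard 4-ball by Freedman's classification; and $C \subseteq \mathring D^4$ since $\partial C = \Sigma$ lies in the interior of $D^4$.

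For the rewriting, the complement $M \setminus \mathring D^4$ is a compact simply-connected 4-manifold with boundary $Y \sqcup S^3$ (simple connectivity follows by van Kampen applied to the decomposition $M = (M \setminus \mathring D^4) \cup_{S^3} D^4$ and the fact that $\pi_1(D^4) = \pi_1(S^3) = 1$). A relative version of Boyer's theorem, or equivalently the Freedman-Quinn classification applied to this cobordism, then yields a decomposition $M \setminus \mathring D^4 \cong Y \times [0,1] \cup H'$ with $H'$ consisting of finitely many 2-handles attached to $Y \times \{1\}$; reassembling gives $M = Y \times [0,1] \cup H' \cup D^4$, as desired. The main obstacle is the topological embedding of $K$ into $N$, which relies essentially on Freedman's topological disc embedding theorem applied in the simply-connected setting; this is the step where Siebenmann's argument from \cite{Perron}*{Section~7} plays a crucial role and must be extended from the closed case to the setting with boundary.
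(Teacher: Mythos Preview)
Your overall strategy---invert $C$ inside $M$ by first realising $C \cup_\Sigma K \cong D^4$ abstractly via Freedman, then embedding the cobordism $K$ into $N = M \setminus \mathring C$ rel~$\Sigma$---is natural, but the embedding step is a genuine gap that you have not filled. The proposed justification (give $K$ a handle decomposition with only 2-- and 3--handles relative to $\Sigma$, then embed the handles via the disc embedding theorem) does not work as stated. Compact topological 4-manifolds need not admit handle decompositions, and even after smoothing the interior of $K$ there is no reason the resulting decomposition avoids 1-handles. More seriously, the disc embedding theorem is not a tool for embedding handles or cobordisms: it upgrades a given family of immersed discs with algebraically transverse spheres to embedded ones, and you have produced no such input data in $N$. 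What you are really asking for---that $K$ embeds in $N$ rel~$\Sigma$---is a relative classification/$s$-cobordism statement, and proving it directly is at least as hard as the lemma you are trying to establish.

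The paper takes a different route that sidesteps the embedding problem entirely. Instead of pushing $K$ into $N$, one places a second copy $C_1'$ of $C$ inside a standard $D^4$ which in turn sits in the interior of $C$ (possible because $C_1' \subseteq C \cup (-C) \cong S^4$ gives $C_1' \subseteq D^4$, and any $D^4$ embeds in $\mathring C$). Set $W := M \setminus \mathring C$ and $W' := M \setminus \mathring C_1'$; both are compact 1-connected 4-manifolds with boundary $Y \sqcup \Sigma$, and capping either with its contractible piece recovers $M$. Boyer's classifying invariants (the intersection form with its boundary presentation, and the spin refinement) are unchanged by gluing a contractible manifold along a homology-sphere boundary component, so Boyer's theorem gives $W \cong W'$ rel boundary. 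Extending over the contractible caps (every self-homeomorphism of $\Sigma$ extends over $C$) produces a self-homeomorphism $\ol f$ of $M$ carrying $C_1'$ to $C$, and then $\ol f(D^4)$ is a 4-ball with $C \subseteq \ol f(\mathring D^4)$. The hard embedding step is thus traded for an appeal to an off-the-shelf classification theorem; this is exactly the Siebenmann trick, with Boyer's classification replacing the closed Freedman--Quinn classification used in \cite{Perron}*{Section~7}.
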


\begin{proof}
Let $C_1$ and $C_1'$ be two copies of $C$. The union $C_1 \cup_{\partial} (-C_1')$ is a homotopy sphere and so is homeomorphic to $S^4$~\cite{F}.
It follows that $C_1'\subseteq S^4$, and (removing a $4$-ball from $S^4$) $C_1'\subseteq D^4$.
Considering a $4$-ball $D^4$ in the interior of $C$, we have inclusions
\begin{equation}\label{eqn:inclusions}
    C_1' \subseteq D^4 \subseteq C \subseteq M.
\end{equation}
    Let $W:= M \sm \mathring{C}$ and let $W' := M \sm \mathring{C}_1'$. We claim there exists a homeomorphism rel.\ boundary $f \colon W \xrightarrow{\cong} W'$. Assuming the claim, since every homeomorphism of a homology 3-sphere $\Sigma$ extends over any given contractible 4-manifold\footnote{This follows from Boyer's classification~\cite{Boyer86}, but can also be proved more directly. Let $f \colon \Sigma \xrightarrow{\cong} \Sigma$, and fix $C^4 \simeq \{\ast\}$ with $\partial C=\Sigma$. Then $D:= C \cup_f -C$ is a homotopy 4-sphere, so by~\cite{F} $D \cong S^4 = \partial D^5$. Consider $D^5$ as a rel.\ boundary $h$-cobordism from $C$ to $C$ rel.\ $f$, and apply the $h$-cobordism theorem~\cite{FQ}*{Theorem~7.1A}.} with boundary $\Sigma$, this extends to a homeomorphism $\ol{f} \colon M = W' \cup C_1' \xrightarrow{\cong} W \cup C = M$ that sends $C_1'$ to $C$. For the copy of $D^4$ as in \eqref{eqn:inclusions}, the image $\ol{f}(D^4)$ is a 4-ball in $M$ whose interior contains $C$.

It remains to prove the claim: for a fixed 1-connected, compact $M$, removing two different copies of the same contractible 4-manifold, $C$ and $C_1'$, from $M$, yields homeomorphic 4-manifolds.
To prove it, we apply Boyer's classification from~\cite{Boyer86}. First note we can apply the tubing trick from the proof of \cref{lemma:disconnected-bdy}, together with homotopy implies isotopy for arcs, to reduce to the case of connected boundary. We assert that Boyer's classifying invariants coincide for a pair of 4-manifolds if and only if they coincide after gluing a contractible 4-manifold to both along homeomorphic homology 3-discs in the boundaries. Boyer's invariants coincide after gluing a contractible 4-manifold because both manifolds $W \cup C$ and $W' \cup C_1'$ are copies of~$M$.

It remains to see the assertion that Boyer's classifying invariants are not affected by gluing contractible manifolds along homology 3-discs. Boyer's main invariant is the intersection form of the 4-manifold~$M$, together with an isomorphism of its algebraic boundary with the linking form on the torsion submodule of $H_1(Y)$. His secondary invariant only applies for a pair of spin 4-manifolds, and (after fixing a spin structure on $Y$) lies in a subgroup of $H^1(Y;\Z/2)$. It records the spin  structure induced on $Y$ by the 4-manifold.  These homological invariants are not affected by gluing on a contractible manifold, and so the assertion holds.
\end{proof}

As promised in \eqref{aim-i} at the start of the section, we deduce \cref{thm:main-PI-I} using Perron's proof from \cite{Perron}*{Sections~3--5}, which culminates in \cite{Perron}*{Lemme~5.2}, together with the Decomposition \cref{lemma:perron-7-2-plus-details}.

\begin{proof}[Proof of \cref{thm:main-PI-I} using \cite{Perron}]
       By \cref{lemma:disconnected-bdy} we assume  without loss of generality that $\partial M$ is connected.
By the Decomposition \cref{lemma:perron-7-2-plus-details}, we can write $M$ as $M = Y \times I \cup H \cup D^4$, where $I = [0,1]$ denotes the collar coordinate.

Let $F \colon M \times [0,1] \to M \times [0,1]$ be a pseudo-isotopy. Recall that $F|_{\sqsubset} = \Id_{\sqsubset}$. Isotope $F$, using uniqueness of collars, to arrange that $F$ restricted to a collar on $\sqsubset$ is the identity, and in particular is the identity on $(M \times \{0\}) \cup (Y \times I \times [0,1])$.

Now we consider the cores $V := \sqcup_i (D^2 \times \{0\})_i \subseteq \sqcup_i (D^2 \times D^2)_i = H$ of the 2-handles $H$.  If we were to turn the 2-handles upside down, then these would be the cocores of the upside-down 2-handles.

The main step of Perron's proof, which culminates in the statement of \cite{Perron}*{Lemme~5.2} is to take a tubular neighbourhood $\mathcal{N}$ of the union $V$ of the cocores of his 2-handles (the cores of our 2-handles $H$), and perform an isotopy of the pseudo-isotopy $F$,  rel.\ the collar on $\sqsubset$, to arrange that its restriction to this neighbourhood  is the inclusion $\mathcal{N} \times [0,1] \to M \times [0,1]$. Possibly after a further isotopy, we may take~$\mathcal{N}=H$, and hence as a result we have a pseudo-isotopy  $F' \colon M \times [0,1] \to M \times [0,1]$ that is the identity away from $\mathring{D}^4 \times [0,1] \subseteq M \times [0,1]$, and is the identity on $D^4 \times \{0\}$. By Alexander's coning trick, with cone point $(0,1) \subseteq \mathring{D}^4 \times [0,1]$, we can further isotope $F'$ rel.\ $(M \times [0,1]) \sm (\mathring{D}^4 \times [0,1])$ to the identity.
\end{proof}

We go on to explain our fix of Quinn's proof of \cref{thm:main-PI-I}, which, as stated in \eqref{aim-ii}, starts with an application of \cref{lemma:perron-7-2-plus-details}.

\section{The topological pseudo-isotopy theorem}\label{section:fixing-the-proof-topologically}

We start by recalling Quinn's strategy for the proof of \cref{thm:main-PI-I}, which begins in \cite{Quinn:isotopy}*{Section~5}. Our fix starts after the statement of \cref{thm:quinn-alg-ints-implies-close-eye}.

Let $M$ be a compact, 1-connected topological 4-manifold and let $F \colon M \times [0,1] \to  M \times [0,1]$ be a topological pseudo-isotopy, i.e.\ a homeomorphism that is the identity on $\sqsubset$. By \cref{lemma:disconnected-bdy} we may and shall assume that $\partial M =: Y$ is connected.
By the Decomposition \cref{lemma:perron-7-2-plus-details}, $M = Y \times [0,1] \cup H \cup D^4$, where $H$ consists of 2-handles added to $Y \times \{1\}$. Define \[X:= Y \times [0,1] \cup H \subseteq M,\] and note that $\partial M = Y \times \{0\} \subseteq X$.  The goal, as on \cite{Quinn:isotopy}*{p.~363}, is to prove the following.

\begin{proposition}\label{prop:goal}
    The pseudo-isotopy $F$ is isotopic rel.\ $\partial M = Y \times \{0\}$ to a pseudo-isotopy $F'$ that is the identity on $X$.
\end{proposition}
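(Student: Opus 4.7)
The plan is to adapt Perron's approach (which in the smooth category treats 4-manifolds without 1-handles) to the topological category, invoking topological analogues of smooth tools: uniqueness of topological collars (M.~Brown), topological tubular neighbourhoods of locally flat submanifolds (Freedman-Quinn), and topological isotopy extension in dimension 4. The decomposition $M = Y \times [0,1] \cup H \cup D^4$ provided by \cref{lemma:perron-7-2-plus-details} is exactly what makes this adaptation possible, since it avoids 1-handles.

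First, I would use topological collar uniqueness to isotope $F$ rel $\sqsubset$ so that $F$ is the identity on a collar neighbourhood of $\sqsubset$ in $M \times [0,1]$. Choosing the collar compatibly with the product structure $Y \times [0,1] \subseteq M$, this gives $F = \Id$ on $Y \times [0,\varepsilon] \times [0,1]$ and on $M \times [0,\varepsilon]$ for some $\varepsilon > 0$. A standard collar-stretching argument — conjugating $F$ by an appropriate ambient isotopy of $M$ supported near $Y \times [0,1]$ and fixing $\partial M$, then iterating — upgrades this to $F = \Id$ on all of $Y \times [0,1] \times [0,1]$.

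The main step is to further isotope $F$ so that it is the identity on $H \times [0,1]$. Let $V \subseteq H$ be the disjoint union of the cores of the 2-handles: locally flat, properly embedded 2-discs in $M$ with boundary on $Y \times \{1\}$. By the previous step, $F$ is already the identity on $\partial V \times [0,1]$. Adapting Perron's Lemme~5.2 to the topological category, I would use topological transversality and topological isotopy extension to isotope $F$ rel $Y \times [0,1] \times [0,1]$ so that its restriction to $V \times [0,1]$ is the identity. The Freedman-Quinn topological tubular neighbourhood theorem for locally flat discs then allows one to extend this identity behaviour to a tubular neighbourhood $\mathcal{N}(V) \times [0,1]$. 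Finally, since $H$ is itself a tubular neighbourhood of $V$, an ambient isotopy of $M$ (rel $Y \times [0,1]$) taking $\mathcal{N}(V)$ onto $H$ yields $F = \Id$ on $X \times [0,1]$, as required.

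The main obstacle is the topological version of Perron's Lemme~5.2 — the step that arranges $F$ to be the identity on the cores $V$. In the smooth setting Perron studies how $F$ moves each core over the pseudo-isotopy parameter and uses smooth transversality together with the Cerf theorem for 3-manifolds to progressively simplify the resulting 3-dimensional family. The topological version requires Quinn's topological transversality and locally flat approximation in dimension 4 to control the intermediate configurations, together with the Alexander trick for pseudo-isotopies of $D^3$ rel boundary to dispose of the resulting 3-dimensional pseudo-isotopies. Granted these topological inputs, the argument proceeds in direct analogy with Perron's, now applied to the (possibly non-smoothable) topological decomposition provided by \cref{lemma:perron-7-2-plus-details}.
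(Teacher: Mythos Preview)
Your proposal takes a genuinely different route from the paper's proof of this proposition. The paper proves \cref{prop:goal} via Quinn's method, which occupies the remainder of \cref{section:fixing-the-proof-topologically}: one punctures $M$ at a point of $M \sm X$, smooths the resulting noncompact manifold $M_0$ and the restricted pseudo-isotopy (using immersion theory and the vanishing of the smoothing obstruction), passes to a $1$-parameter family of Morse functions, applies Quinn's controlled Reduction to Eyes Lemma, and then inductively removes the finitely many eyes that intersect $X \times [0,1]$ using factorisation together with \cref{thm:quinn-alg-ints-implies-close-eye} and \cref{lemma:isotopy of finger moves}. Your outline is instead a Perron-style argument, closer to what the paper sketches at the end of \cref{sec:Perron} as an \emph{alternative} route to \cref{thm:main-PI-I}.

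There is, however, a genuine gap in your version of the Perron route. You propose to adapt Perron's Lemme~5.2 directly to the topological category using topological transversality, locally flat approximation, and the $3$-dimensional Alexander trick. But Perron's Lemme~5.2 is not merely a transversality statement: its proof organises the motion of the $2$-handle cores through a $1$-parameter family of Morse functions and uses Cerf theory to simplify that family. These are smooth-category tools, and your proposed topological substitutes do not obviously reproduce that organisation. Neither Perron nor the present paper attempts a purely topological Lemme~5.2; instead, Perron (in his Section~7, via Siebenmann's argument) and the paper (in \cref{section:fixing-the-proof-topologically}) first \emph{smooth} the punctured manifold and the pseudo-isotopy, and only then invoke smooth Cerf-theoretic machinery. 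The missing idea in your outline is precisely this smoothing step: once you puncture $M$ inside the $D^4$, smooth $M_0$, and isotope $F|_{M_0 \times [0,1]}$ to a diffeomorphism, Perron's smooth Lemme~5.2 applies directly and your remaining steps go through. Without smoothing, the ``topological Lemme~5.2'' you allude to would itself be a substantial new result requiring its own proof.
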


Since $M \sm X \subseteq D^4 \subseteq M$, we have that $(M \sm X) \times [0,1] \subseteq D^4 \times [0,1] \subseteq M \times [0,1]$, we can apply the Alexander trick to $F'$, pushing towards $(0,1) \in D^4 \times [0,1] \subseteq M \times [0,1]$ as the cone point, to isotope $F'$ to the identity. Hence it suffices to prove \cref{prop:goal}.

Next, Quinn assumes, after an isotopy, that $F$ is the identity on $B \times [0,1]$, where $B \subseteq M \sm X$ is a 4-ball.  Write $M_0$ for $M$ with the origin of $B$ removed.

Our next goal is to smooth the restriction of our pseudo-isotopy $F| \colon M_0 \times [0,1] \to M_0 \times [0,1]$, with respect to some smooth structures on the domain and codomain. These smooth structures will not coincide, in general.  We use results of Perron and Quinn, while providing additional details on how their results apply to smooth $F|$.

By \cite{FQ}*{Theorem~8.2}, there is a smooth structure $\sigma$ on $M_0$.  Take the product $\Sigma$ of this with the standard smooth structure on $[0,1]$, to obtain the smooth 5-manifold $\{M_0 \times [0,1]\}_{\Sigma}$, where the subscript denotes the choice of smooth structure.
Note that the projection $\pr_2 \colon \{M_0 \times [0,1]\}_{\Sigma} \to [0,1]$ is a smooth Morse function with no critical points, and that $F$ restricts to a homeomorphism $F| \colon M_0 \times [0,1] \to M_0 \times [0,1]$ that is the identity on $((B \sm \{0\}) \times [0,1]) \cup \sqsubset$.

Recall~\cite{Kirby-Siebenmann:1977-1}*{Essay~II,~\S 0,~p.~57} that a smooth structure $\theta$ on $M_0 \times [0,1]$ is \emph{sliced} if the projection $\pr_2 \colon \{M_0 \times [0,1]\}_{\theta} \to [0,1]$ is a smooth submersion.
Perron showed the following, making use of immersion theory~\cites{Lees,Lashof-70-I,Lashof-70-II,Lashof-71}, and Quinn's prior result~\cite{Quinn-annulus}*{Corollary~2.2.3} that $\TOP(4)/\OO(4) \to \TOP/\OO$ is 3-connected.

\begin{theorem}[{\cite{Perron}*{Corollaire~1.3}}]
There exists a sliced smooth structure $\theta$ on $M_0 \times [0,1]$ such that
 $F|_{M_0 \times [0,1]}$ is isotopic  to a smooth pseudo-isotopy
 \[ F' \colon \{M_0 \times [0,1]\}_{\Sigma} \to \{M_0 \times [0,1]\}_{\theta}\]
 rel.\ $(\partial M \times  [0,1]) \cup (M_0 \times \{0\})$.
\end{theorem}

Note that $\pr_2 \circ F' \colon \{M_0 \times [0,1]\}_{\Sigma} \to [0,1]$ is a smooth Morse function with no critical points. Here we use that $F'$ is a diffeomorphism and that $\pr_2$ is a smooth submersion with respect to $\theta$. Now we can apply Cerf theory, as at the start of \cref{sec:setting-up}, to obtain a corresponding 1-parameter family of generalised  Morse functions and gradient-like vector fields $(G_t,\xi_t)$  on $M_0 \times [0,1]$.
Since we are in the non-compact setting there can be infinitely many critical points.
However note that only finitely many of them intersect $X$ for some $t$, because $X$ and $[0,1]$ are compact.
Ultimately, it will suffice to remove this finite subset of the critical points.

Next, we describe how Quinn deals in his argument with the fact that $(G_t,\xi_t)$ can potentially have infinitely many critical points.
First, Quinn defines a control function $\gamma \colon M_0 \to [0,\infty)$ to be $0$ on $M \sm B$ and $(1/d(x,0))-1$ on $B$, where $d$ denotes the standard Euclidean metric on the ball $B$.  Since $F'$ arises from a pseudo-isotopy of the compact $4$-manifold $M$, the family $(G_t,\xi_t)$ can be assumed to be controlled, i.e.\ to have handles of controlled diameter, meaning each of their images under $\gamma$ has diameter less than some given constant.
There is then a controlled Reduction to Eyes Lemma~\cite{Quinn:isotopy}*{Lemma~5.1}, proven in \cite{Quinn:isotopy}*{Section~5.3}.
The stated outcome is a one parameter family of handle structures where each of the handles has controlled diameter, that consists only of 2- and 3-handles, with independent births and deaths and no handles slides. Implicit in the statement, as indicated by the name of the lemma and by the way the lemma is applied in \cite{Quinn:isotopy}*{Section~5}, is that additionally the associated Cerf graphic  consists of nested eyes.

Here is a brief outline of how Quinn proved this, building on the work of Hatcher--Wagoner and his own work on high-dimensional controlled pseudo-isotopy~\cite{Ends-of-maps-IV}.
For the convenience of the reader, we provide a more specific list of statements in \cite{HW} than was given in \cite{Quinn:isotopy}.  Quinn's argument applies the following methods from Hatcher--Wagoner, but in the non-compact controlled setting.
It is shown in \cite{HW}*{Chapter~V,~\S 3} how to restrict to families with critical points of indices only~2 and~3, i.e.\ 1-parameter families of handle structures with only 2- and 3-handles.  Elimination of swallowtails~\cite{HW}*{Chapter~V,~\S 5} and the Independent Trajectories Principle~\cite{HW}*{Chapter~I,~\S 7, pp.~64--7} is applied, as in the proofs of \cite{HW}*{Chapter~VI, Propositions~3 and 4, pp.~214--7}, to obtain eyes consisting only of 2- and 3-handles, with independent births and deaths, but possibly with 2/2 and 3/3 rearrangements and handle slides.   Hatcher--Wagoner's Exchange Lemma \cite{HW}*{Chapter~IV, Lemma~2.1, p.~132} trades all the 2/2 events for 3/3 events; see also \cite{HW}*{pp.~142--3}.  Until this point, the Hatcher--Wagoner arguments in the compact case generalise to the non-compact, controlled setting, as originally observed in~\cite{Ends-of-maps-IV}. This need not hold for the Hatcher--Wagoner argument to remove 3/3 handle slides and rearrangements. Instead, the argument of \cite{Quinn:isotopy}*{Section~5.3} (after the first two paragraphs) accomplishes this, while preserving the condition on controlled diameter of handles.

As mentioned above, there are finitely many critical points, $n$ say, that intersect $X$ for some $t \in [0,1]$.
Since there are no handle slides and the births and deaths are independent, we can apply the argument in \cref{lemma:innermost-outermost} to perform a deformation of the family such that the $n$ eyes intersecting $X$ are the $n$ innermost eyes in the nesting.

Again the strategy is to remove one of these $n$ eyes at a time, starting with the innermost eye.
The proof is by induction on $n$, using the following inductive hypothesis, which is the topological, non-compact version of \cref{ind-hyp}.

\begin{indhyp}[$\mathbf{E(n)}$]
 \label{ind-hyp-top}
     Let $M$, $X \subseteq M$, and $M_0$ be as above.  Let $(G_t,\xi_t)$ be  a 1-parameter family of generalised Morse functions and gradient like vector fields on $M_0 \times [0,1]$. Suppose that the associated Cerf graphic consists of nested eyes %of controlled diameter
     with cancelling pairs of index 2 and 3 critical points, with no handle slides, no rearrangements, and independent births and deaths.
   Suppose  there are at most $n$ eyes that, for some time $t$, intersect $X \times [0,1]$, and that these are the $n$ innermost eyes in  the Cerf graphic.
     Then there is a deformation of $(G_t,\xi_t)$ to a 1-parameter family without any critical points that intersect $X \times [0,1]$.
 \end{indhyp}

Quinn's controlled Reduction to Eyes Lemma~\cite{Quinn:isotopy}*{Lemma~5.1} implies that the hypotheses of $E(n)$ hold for some $n$.  We will prove, by induction, that $E(n)$ holds for all $n$. As a result, we will obtain an isotopy of our pseudo-isotopy $F' \colon M_0 \times [0,1] \to M_0 \times [0,1]$ to one that is the identity on $X \times I$, and that is controlled. Due to the latter condition, this extends continuously to $M \times [0,1]$, which completes the proof of \cref{prop:goal} modulo the upcoming induction.

To prove the base case $E(1)$ and the inductive step, we will use the notation from \cref{sec:setting-up}, which we briefly recall.
We have birth time $t_b$, and a death time $t_d$.
In the level sets $G_{t}^{-1}(1/2)$, for $t_b < t< t_d$, we have  ascending spheres $\{A_i^t\}$ of the index 2 critical points, and $n$ descending spheres $\{B_i^t\}$ of the index 3 critical points.
The $A_i^t$ and the $B_i^t$ are enumerated so that the $n$ eyes that intersect $X$ are first, with $A_1^t$ and $B_1^t$ correspond to the innermost eye among these $n$ eyes, and then moving outwards. The spheres $A_1^t,\dots, A_n^t$ and $B_1^t,\dots,B_n^t$ can intersect $X \times [0,1]$, whereas $A_i^t$ and $B_j^t$ are disjoint from $X \times [0,1]$, for all $i,j>n$ and  for all $t \in [0,1]$.

Let $A^t := \cup_{i=1}^n A_i^t$ and $B^t := \cup_{i=1}^n B_i^t$.
We  also have a finger move time $t_f$ and a Whitney move time $t_w$, with $t_b < t_f < 1/2 < t_w < t_d$.
In the middle-middle level $G_{1/2}^{-1}(1/2)$ we see the spheres $A_i := A_i^{1/2}$,  $B_j := B^{1/2}_j$, the finger move discs  $\{V^{ij}_k\}$ and  the Whitney discs $\{W^{ij}_{\ell}\}$. We write $V^{ij} := \cup_k V^{ij}_k$, $W^{ij} := \cup_{\ell} W^{ij}_{\ell}$, $V := \cup_{i,j} V^{ij}$, and $W := \cup_{i,j} W^{ij}$.

The fact that $A_i^t$ and $B_j^t$ are disjoint from $X \times [0,1]$, for all $i,j>n$ and for all $t \in [0,1]$,  implies that in the middle-middle level all the discs $V^{ij}$ and $W^{ij}$ are disjoint from $X$ for $j > n$; an intersection of one of these discs with~$X$ would entail a motion of $B_j^t$, for some $j >n$, through $X \times [0,1]$.

The following statement summarizes the result of
\cite{Quinn:isotopy}*{Sections~4.6~and~5.2}.
This is the topological analogue of \cref{thm:quinn-alg-ints-implies-close-eye-smooth-stable-case}, proved using the disc embedding theorem.

\begin{theorem}[Quinn]\label{thm:quinn-alg-ints-implies-close-eye}
 In the situation described above, if Quinn's arc condition is satisfied for $A_1,B_1$, and if the algebraic intersection numbers $\mathring{V}^{11}_k \cdot \mathring{W}^{11}_{\ell}$ vanish for all $k,\ell$, then there is a topological deformation of the family to one with $(n-1)$ nested eyes that intersect $X \times [0,1]$.
\end{theorem}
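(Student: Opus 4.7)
The plan is to parallel the argument for \cref{thm:quinn-alg-ints-implies-close-eye-smooth-stable-case}, with Freedman's topological disc embedding theorem in the simply-connected setting replacing the smooth use of $S^2 \times S^2$ stabilization. We start in the middle-middle level with the innermost pair $A_1, B_1$ and the discs $V^{11}_k, W^{11}_\ell$, whose boundaries form arcs on $A_1$ and $B_1$ by Quinn's arc condition, and whose interiors have algebraically vanishing pairwise intersection numbers.

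First I would pair up the $V^{11}$--$W^{11}$ interior intersections into algebraically cancelling pairs, and for each pair construct a Whitney circle by joining a Whitney arc on $V^{11}_k$ to a Whitney arc on $W^{11}_\ell$. Simple-connectedness of the middle-middle level ensures each circle bounds an immersed disc, framed correctly after a standard boundary-twist adjustment. Next I would produce algebraically dual spheres to these Whitney discs: the Whitney spheres $S_{V^{11}_k}$ from \cref{subsection:whitney spheres}, together with parallel copies of the $S^2$ factors in the $S^2 \times S^2$ summands contributed by the nested eyes in the middle-middle level, provide the required duals after piping to resolve extraneous intersections.

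Then I would apply Freedman's disc embedding theorem in the simply-connected topological category to promote the immersed Whitney discs to embedded, framed, pairwise disjoint discs whose interiors avoid $A \cup B$. Performing Whitney moves along these discs eliminates all $V^{11}$--$W^{11}$ interior intersections. At this point $V^{11}$ and $W^{11}$ are disjoint, they still satisfy Quinn's arc condition, and the innermost eye can be cancelled by the standard Hatcher--Wagoner cancellation (\cite{HW}*{Chapter~V, Proposition~1.1}), reducing the count of nested eyes intersecting $X \times [0,1]$ by one.

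The main obstacle is controlling the dual spheres: each Whitney sphere $S_{V^{11}_k}$ meets $W^{11}$ in algebraically cancelling pairs of intersections, and these, together with any self-intersections of candidate dual spheres, must be resolved by piping into further dual sphere classes so that the final collection of duals is framed, embedded, pairwise disjoint, and disjoint from $A \cup B$ and the rest of $V \cup W$. This bookkeeping is the topological analogue of the careful $S^2 \times S^2$ stabilization performed in \cref{section:fixing-the-proof}, and it is exactly what makes Freedman's disc embedding theorem applicable in this unstable setting.
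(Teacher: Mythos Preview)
The paper does not give its own proof of this theorem; it is stated as a summary of \cite{Quinn:isotopy}*{Sections~4.6 and 5.2}, with the single remark that it is ``proved using the disc embedding theorem.'' Your outline---pair the $V^{11}$--$W^{11}$ intersections, build immersed second-order Whitney discs, supply algebraically transverse spheres, and invoke Freedman's theorem to embed them---matches that description and is the correct overall shape of Quinn's argument in Section~4.6.

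There is, however, a genuine omission relative to the full statement. The conclusion is not merely that one eye closes, but that the resulting family has $(n-1)$ eyes \emph{that intersect $X\times[0,1]$}. In the non-compact setting of $M_0$ there are in general infinitely many other eyes, and the paper emphasises that Quinn's ``primary concern in \cite{Quinn:isotopy}*{Section~5.2} is to preserve this condition as an eye is cancelled.'' Every piping, boundary twist, and Whitney move you perform must be arranged to avoid dragging any $A_j^t$, $B_j^t$, $V^{ij}$, or $W^{ij}$ with $j>n$ across $X$. Your outline never mentions $X$ or control, so as written it does not establish the theorem as stated. This is not a minor bookkeeping point: it is exactly the content of Quinn's Section~5.2 and is why that section is cited alongside~4.6.

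Two smaller issues. First, the Whitney spheres $S_{V^{11}_k}$ of \cref{subsection:whitney spheres} are dual to the \emph{first-order} Whitney discs $W^{11}_\ell$ near their boundaries; it is not automatic that they serve as algebraic duals to your \emph{second-order} Whitney discs, whose boundaries lie in the interiors of $V^{11}$ and $W^{11}$. The transverse spheres actually used in Quinn's argument come from the standard dual spheres to $A_1$ and $B_1$ (and their Whitney-sphere variants), tubed appropriately; you should be explicit about this. Second, once $V^{11}$ and $W^{11}$ have disjoint interiors, the citation to \cite{HW}*{Chapter~V, Proposition~1.1} is premature: one must first deform the family so that the (now compatible) finger and Whitney moves cancel, reducing to the situation $|A_1^t\pitchfork B_1^t|=1$ for all $t$, and only then does that proposition apply.
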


We need to arrange a situation that we can apply this result to prove the inductive step.
As in the smooth stable case discussed in \cref{section:fixing-the-proof}, the problem with the proof in \cite{Quinn:isotopy} is in the use of the replacement criterion to arrange $\mathring{V}^{11}_k \cdot \mathring{W}^{11}_{\ell}=0$, to arrange that the hypotheses of \cref{thm:quinn-alg-ints-implies-close-eye} are satisfied. We give an alternative argument below.

As a result of the use of control theory, discussed above, $V^{ij}$ and $W^{ij}$ are disjoint from $X$ for $j > n$.
Quinn's primary concern in \cite{Quinn:isotopy}*{Section~5.2} is to preserve this condition as an eye is cancelled.  This guarantees that the associated eyes, with enumeration $>n$, remain disjoint from $X \times [0,1]$.  During this proof, Quinn mentions the replacement criterion in the second paragraph of \cite{Quinn:isotopy}*{p.~366}. Since we do not use the replacement criterion in our fix, this paragraph becomes irrelevant. We observe that our new proof preserves $V^{ij}\cap X=W^{ij}\cap X=\emptyset$  for $j > n$.

As in \cref{section:fixing-the-proof}, we perform the factorisation to replace the family with discs $V \cdot W$ with the concatenation of moves corresponding to discs $V \cdot \ul{\wh{V}} \cdot \wh V \cdot W$. The discs $\wt{V}^{11}$ are defined as before, by tubing into Whitney spheres. Ignore the modification to these discs from the paragraph just above \cref{remark:separate-proof}; as stated in that remark, this modification was specific to the smooth stable proof.   In particular, we do not stabilise the pseudo-isotopy with $(\#^m S^2\times S^2) \times [0,1]$.
Then for each $k$ we define
\[\wh{V}^{ij}_k := \begin{cases}
   \wt{V}^{11}_k & (i,j) = (1,1) \\
   V^{ij}_k & (i,j) \neq (1,1).
\end{cases}\]
We emphasise that in the second case we make this definition for all the possibly infinitely many values of  $(i,j) \neq (1,1)$.  The factorisation involves all the eyes, not just the $n$ innermost eyes that we are trying to cancel.   The only discs that are modified are the $V^{11}$, and these already intersect~$X$. So we continue to have $\wh{V}^{ij}_k \cap X = V^{ij}_k \cap X = \emptyset$ for $j>n$ and for all $k$.

As in \cref{section:fixing-the-proof}, we cancel all the pairs of index $2-3$ critical points for a small period of time in the middle, and consider again the two families of (possibly infinitely many) nested eyes corresponding to a 1-parameter family of Morse data on $M_0 \times [0,1]$.

By the same argument as in \cref{section:fixing-the-proof}, we remove all but the innermost eye in the left hand family of nested eyes.  We will return to analysing this eye later.

In the innermost eye of the right hand family, the finger and Whitney discs once more satisfy the algebraic intersection hypothesis, as well as Quinn's arc condition. Thus by \cref{thm:quinn-alg-ints-implies-close-eye} we can remove the innermost eye without causing any new critical points to intersect $X \times [0,1]$, and maintaining control.  Then the right hand family contains $(n-1)$ nested eyes that intersect $X \times [0,1]$. If $n=1$ there are no eyes left and so we are done for the right hand family. Otherwise, by \cref{ind-hyp-top} we can remove  $(n-1)$ nested eyes by a deformation, without causing any other critical points to intersect $X \times [0,1]$, in the family that is the outcome of this deformation.

Now we analyse the left hand family, consisting of one remaining eye. This has finger discs $V^{11}$, and then Whitney discs $\underline{\wh{V}}^{11}$ made from tubing the discs in $V^{11}$ into Whitney spheres.  We do not need to worry about control here, nor intersections with $X \times [0,1]$, because we have reduced this family to a single eye. We just need to see that the pseudo-isotopy corresponding to this family is topologically isotopic to the identity.

Let $t_{b+}$ be a time shortly after the birth time $t_b$.  Then $A^{t_{b+}}$ and $B^{t_{b+}}$ denotes the ascending and descending spheres, respectively, of the index 2 and 3 critical points, in the middle level $G_{t_{b+}}^{-1}(1/2)$, shortly after the birth.
Fixing a regular neighbourhood of $A^{t_{b+}} \cup B^{t_{b+}}$ determines a decomposition of $G_{t_{b+}}^{-1}(1/2)$ as $M_0 \# S^2 \times S^2$.

\begin{lemma} \label{lemma:isotopy of finger moves}
    We may assume, after a deformation of the 1-parameter family, that every finger move,  when viewed in the middle-middle level,  occurs entirely within the $\# S^2 \times S^2$ summand of $M_0 \# S^2 \times S^2$.
\end{lemma}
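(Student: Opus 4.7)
The plan is to use the simple connectedness of the 2-spheres $A_1$ and $B_1$ and of the middle-middle level $L := G_{1/2}^{-1}(1/2) \cong M_0 \# S^2 \times S^2$ to isotope each finger disc $V^{11}_k$ into the $S^2 \times S^2$ summand. Denote by $U_{b+} \subset G_{t_{b+}}^{-1}(1/2)$ the regular neighborhood of $A^{t_{b+}} \cup B^{t_{b+}}$ from the setup of the lemma; since between $t_{b+}$ and $t = 1/2$ the 1-parameter family has no births, deaths, or handle slides (we have isolated the single remaining eye of the left-hand family), $U_{b+}$ corresponds under the canonical ambient isotopy of middle levels to a subspace $U \subset L$ homeomorphic to $(S^2 \times S^2)\setminus \mathring{D}^4$. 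Inside $U$, the spheres $A_1 \cup B_1$ form the standard wedge of 2-spheres with a single transverse intersection point $p$.

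First I would treat the finger discs one at a time. For a fixed $V^{11}_k$, write $\alpha_A := \partial V^{11}_k \cap A_1$ and $\alpha_B := \partial V^{11}_k \cap B_1$. Since $A_1 \cong B_1 \cong S^2$ are simply connected, isotope $\alpha_A$ inside $A_1$ and $\alpha_B$ inside $B_1$ into arbitrarily small disc neighborhoods of $p$. These isotopies of arcs inside $A_1, B_1$ extend to ambient isotopies of $L$ preserving $A_1 \cup B_1$ setwise, via isotopy extension together with tubular neighborhood uniqueness, and carry the interior of $V^{11}_k$ along.

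Next, with $\partial V^{11}_k$ now a short circle near $p$ inside $U$, I would isotope the interior of $V^{11}_k$ into $U$ using the simple connectedness of $L$. A standard small embedded disc inside $U$ with the same boundary provides the target; since $L$ is $1$-connected, the target disc and $V^{11}_k$ are homotopic rel boundary, and general position in dimension four promotes this homotopy to an ambient isotopy of $L$. Performing these isotopies sequentially for $k = 1, \dots, m$ in progressively smaller nested neighborhoods of $p$ ensures that all of the $V^{11}_k$ land in $U$ without mutual interference, and the collected ambient isotopies assemble into a deformation of $(G_t, \xi_t)$.

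The main obstacle will be controlling the incidental intersections created in the last step: ambient isotopy of an embedded disc in a 1-connected 4-manifold can create intersections with $A_1 \cup B_1$ and with the Whitney discs $\underline{\wh V}^{11}$ that appear downstream. These are handled by exploiting the flexibility of the simply connected middle level together with tubing into framed geometrically dual spheres to $A_1$ and $B_1$ that are available inside the standard $S^2 \times S^2$ summand $U$, analogous to the tubing arguments used in \cref{section:fixing-the-proof}. Once every $V^{11}_k$ lies inside $U$, the corresponding deformation of the 1-parameter family realizes the conclusion of the lemma.
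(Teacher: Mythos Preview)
Your proposal has a genuine gap in the step where you assert that ``general position in dimension four promotes this homotopy to an ambient isotopy.'' This is false for 2-discs in a 4-manifold: a generic regular homotopy between two embedded discs with common boundary consists of finger moves and Whitney moves, and there is no reason for the Whitney discs to be embedded or to have interiors disjoint from the rest of the configuration. Upgrading homotopy to isotopy for surfaces in 4-manifolds is exactly the hard problem that the entire subject is built around; it cannot be resolved by a general-position appeal. There is also a secondary issue: simple connectivity of $L$ does not by itself imply that two discs with the same boundary are homotopic rel boundary --- that is controlled by $\pi_2(L)$, which is nonzero here.

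The paper avoids this entirely by working one dimension down. A finger move is determined not by the resulting finger disc $V^{11}_k$ in the middle-middle level, but by its \emph{finger move arc}: a framed arc in the middle level just before time $t_f$, running from $B^{t_{b+}}$ to $A^{t_{b+}}$ (which at that moment still meet in a single point). The paper closes each such arc to a loop using short arcs on $A^{t_{b+}}$ and $B^{t_{b+}}$, uses simple connectivity of the complement $\big(M_0\# S^2\times S^2\big)\setminus\big(A^{t_{b+}}\cup B^{t_{b+}}\big)$ to cap the loop by an immersed disc there, and then pushes all double points of that disc off over the finger-arc portion of the boundary to make it embedded. This embedded 2-disc is then used to guide an isotopy of the \emph{1-dimensional} finger arc into a neighbourhood of $A^{t_{b+}}\cup B^{t_{b+}}$, i.e.\ into the $S^2\times S^2$ summand. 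For arcs in a 4-manifold homotopy genuinely does imply isotopy by general position, so this step is legitimate. The essential point you are missing is to isotope the data that \emph{specifies} the finger move (a 1-complex) rather than the finger disc itself (a 2-complex).
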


\begin{proof}
  Since $M_0 \# S^2 \times S^2$ is simply-connected and the union of the ascending and descending spheres of the critical points are $\pi_1$-negligible, the complement $N:= \big(M_0 \# S^2 \times S^2\big) \sm \big(A^{t_{b+}} \cup B^{t_{b+}}\big)$ is simply-connected.

  Consider the collection of finger move arcs, with ends joined together via disjoint arcs on $A^{t_{b+}}$ and $B^{t_{b+}}$. By the previous paragraph these loops are null homotopic in the complement of the ascending and descending spheres, and hence bound immersed discs with interiors in $N$.
See the left picture in \cref{figure:Finger_move}. We can push intersections between these discs, and self-intersections, off the boundary, over the part of the boundary of the discs consisting of the finger move arc.  We obtain a collection of mutually disjoint embedded discs.  We can use these to isotope all the finger moves into an arbitrarily small neighbourhood of $A^{t_{b+}} \cup B^{t_{b+}}$. We already chose such a neighbourhood in order to determine the decomposition of $G_{t_{b+}}^{-1}(1/2)$ as $M_0 \# S^2 \times S^2$.  Hence we can shrink the finger move by an isotopy to lie in the $\# S^2 \times S^2$ summand.
\begin{figure}
\centering
\includegraphics[height=3.2cm]{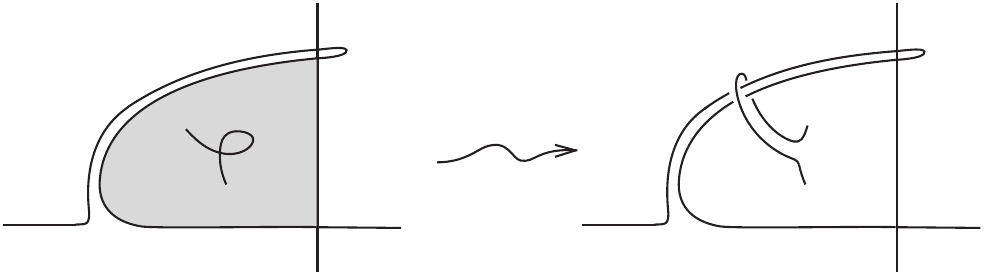}
\caption{The finger move arc bounds an embedded disc.  }
\label{figure:Finger_move}
\end{figure}
This change can be realised by a deformation of the whole family, because we can apply an ambient isotopy consistently for all times before $t_f$.
\end{proof}

By \cref{lemma:isotopy of finger moves}, we assume that in our left hand family, each finger move, with finger disc $V^{11}_k$, occurs within the $S^2 \times S^2$ factor in the middle level.
Also each Whitney move, with Whitney disc $\ul{\wt{V}}^{11}_k$, occurs in a neighbourhood of  $A_1 \cup B_1 \cup \wt{V}^{11}$.   We built the Whitney discs $\ul{\wt{V}}^{11}_k$ using Whitney spheres, and these can be assumed to lie in an arbitrary neighbourhood of $V^{11}$.  Hence each Whitney move occurs in a neighbourhood of  $A_1 \cup B_1 \cup V^{11}$.

We assumed that $A_1$ stays fixed, and we have now arranged that all motions of $B_1$ occur within the $S^2 \times S^2$ summand.  Hence all motions of $A_1$ and $B_1$ stay within this summand.
This translates to a pseudo-isotopy supported in a copy of $D^4 \subseteq M_0$.  That is, we have an isotopic pseudo-isotopy $H \colon M_0\times [0,1] \to M_0 \times [0,1]$, a decomposition
\begin{equation}\label{eqn:decomposition}
    M_0 \times [0,1] \cong (M_0 \sm \mathring{D}^4) \times [0,1] \cup_{\partial D^4 \times [0,1]} D^4 \times [0,1],
\end{equation}
and a pseudo-isotopy $P \colon D^4 \times [0,1] \to D^4 \times [0,1]$, such that $H$ has the form
\[H = \Id \cup P\]
with respect to the decomposition~\eqref{eqn:decomposition}.
By Alexander's coning trick, again with cone point $(0,1) \subseteq D^4 \times [0,1]$, the pseudo-isotopy $P$ is topologically isotopic rel.\ $\sqsubset$ to $\Id_{D^4 \times [0,1]}$.  Hence $H$ is isotopic to $\Id_{M_0 \times [0,1]}$, and the pseudo-isotopy corresponding to our left hand family is topologically isotopic to $\Id_{M_0 \times [0,1]}$.
This completes the proof of the inductive step, and hence completes the proof of \cref{thm:main-PI-I}.
\qed

\section{The disc replacement criterion}\label{section:DRC-problem}

We discuss the problems with the given proof of the replacement criterion in \cite{Quinn:isotopy}*{Section~4.5}, and mention a couple of related open problems.

We begin with a meta argument that there ought to be a problem with given proof of the replacement criterion.
The proof given in \cite{Quinn:isotopy}*{Section~4.5} does not use the simply-connected hypothesis, nor the hypothesis that the boundaries of finger and Whitney discs are arranged in an arc.  The same proof, if valid, would allow one to drastically alter the Hatcher--Wagoner secondary obstruction, in particular in the $\Wh_1(\pi_1(M);\pi_2(M))$ summand, leading to a contradiction with the work of Hatcher--Wagoner and Igusa~\cites{HW,Igusa-what-happens}.

\subsection{Description of the problem}

Now we highlight a specific issue with the proof given by Quinn in \cite{Quinn:isotopy}*{Section~4.5} and also illustrate a connection between Quinn's idea and the method of factoring used in our corrected proof. We give a thorough account of the approach in \cite{Quinn:isotopy}*{Section~4.5} because the exposition in that reference is short on detail, so that pinpointing the error is not immediate. Since the replacement criterion remains an open problem, stated as \cref{problem:DRC}  below, we think it is important to explain the precise problem with the original approach.

The goal of \cite{Quinn:isotopy}*{Section~4.5} is to prove that in a single eye, with $A$, $B$, $V$, and $W$ satisfying Quinn's embedded arc condition, if there is a collection of alternative Whitney discs $\widetilde{W}$ with the same boundaries as $W$ but with interiors disjoint from $W$, then the pseudo-isotopy can be smoothly deformed to one with the same $A$, $B$, and $V$, but with $W$ replaced with $\widetilde{W}$. Figure~\ref{F:ABVWWtilde} illustrates this setup. Quinn initially reconfigures things slightly by introducing a new $2$--sphere $\widetilde{A}$ that is parallel to $A$ but is not yet seen as the ascending sphere of a $2$--handle. Then one displaces $\widetilde{W}$ slightly so that $V$ and $\widetilde{W}$ are seen as finger and Whitney discs for the pair $(\widetilde{A},B)$, while $V$ and $W$ remain as finger and Whitney discs for $(A,B)$. This is illustrated in Figure~\ref{F:AAtildeBVWWtilde}.

\begin{figure}[h!]
\begin{center}

\begin{subfigure}{.45\textwidth}
  \centering
  \labellist
\small\hair 2pt
\pinlabel {$A$} [c] at 205 460
\pinlabel {$B$} [c] at 180 260
\pinlabel {$V$} [c] at 308 205
\pinlabel {$W$} [c] at 420 340
\endlabellist
%  \fbox{
\includegraphics[width=\linewidth]{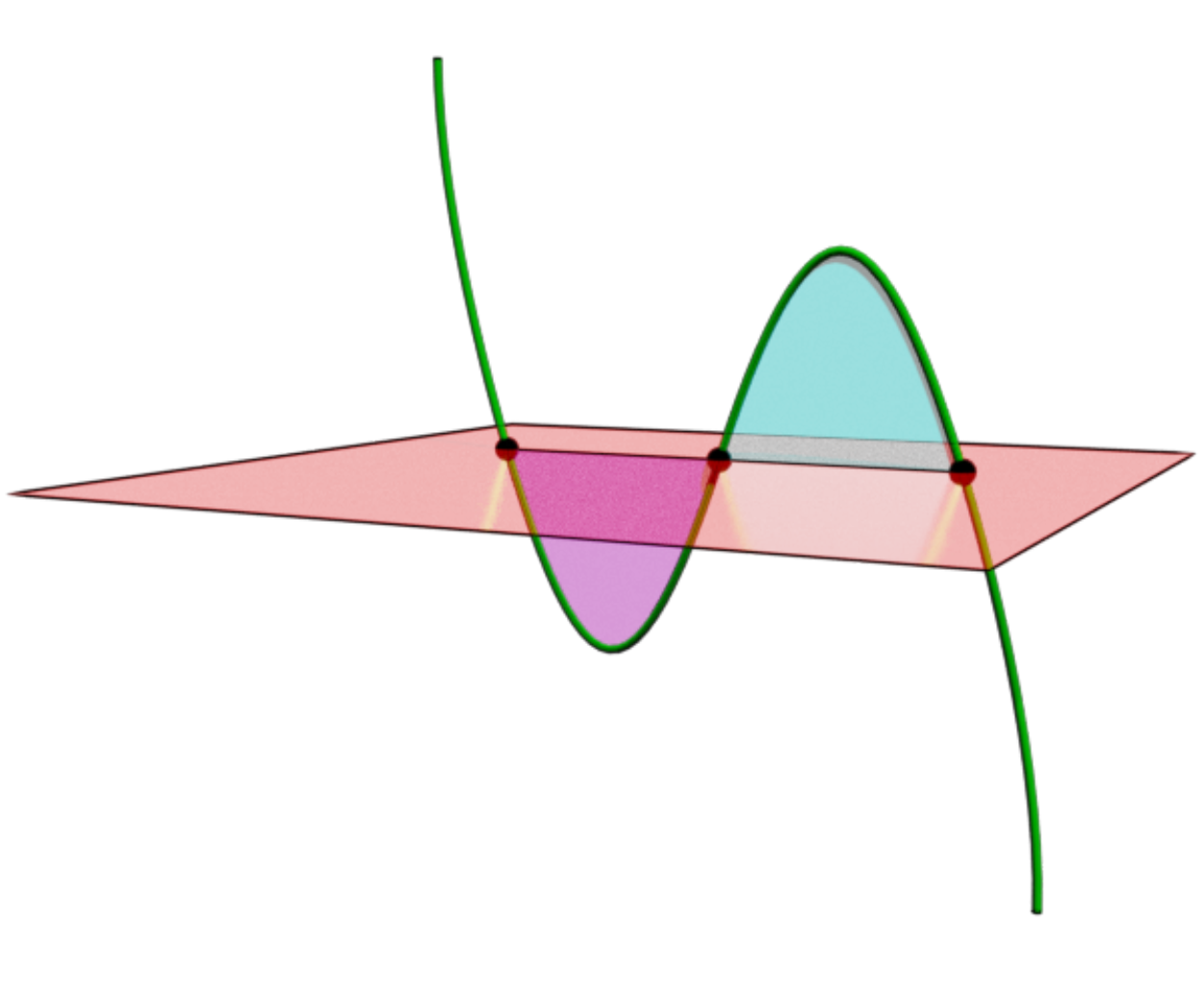}
%}
\end{subfigure}
\begin{subfigure}{.45\textwidth}
  \centering
  \labellist
\small\hair 2pt
\pinlabel {$A$} [c] at 205 460
\pinlabel {$B$} [c] at 180 260
\pinlabel {$V$} [c] at 308 205
\pinlabel {$\widetilde{W}$} [c] at 400 300
\endlabellist
%  \fbox{
\includegraphics[width=\linewidth]{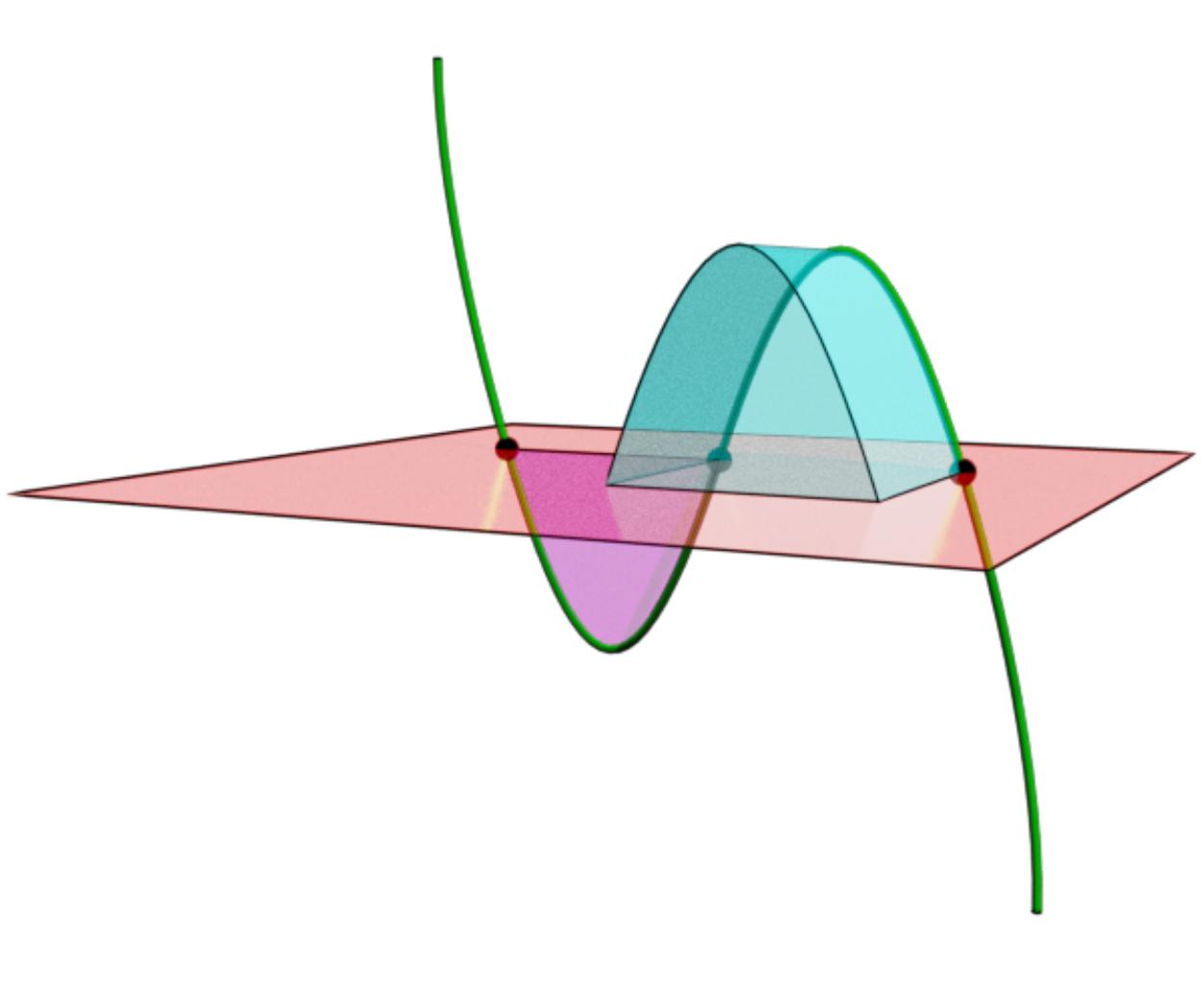}
%}
\end{subfigure}

\caption{On the left: $A$, $B$, $V$, and $W$. On the right: the same $A$, $B$ and $V$ but now we have a different Whitney disc $\widetilde{W}$. We do not assume $\widetilde{W}$ is isotopic to $W$ (if it were, this discussion would be trivial), only that the interiors of $W$ and $\widetilde {W}$ are disjoint. Note that both $W$ and $\widetilde{W}$ may intersect $V$ in their interiors, but this is not indicated in the figure. }
\label{F:ABVWWtilde}
\end{center}
\end{figure}

\begin{figure}[h]
\begin{center}

\begin{subfigure}{.45\textwidth}
  \centering
  \labellist
\small\hair 2pt
\pinlabel {$A$} [c] at 240 490
\pinlabel {$B$} [c] at 100 265
\pinlabel {$V$} [c] at 305 230
\pinlabel {$W$} [c] at 405 350
\pinlabel {$\widetilde{A}$} [c] at 154 490
\pinlabel {$\widetilde{W}$} [c] at 335 340
\endlabellist
%  \fbox{
\includegraphics[width=\linewidth]{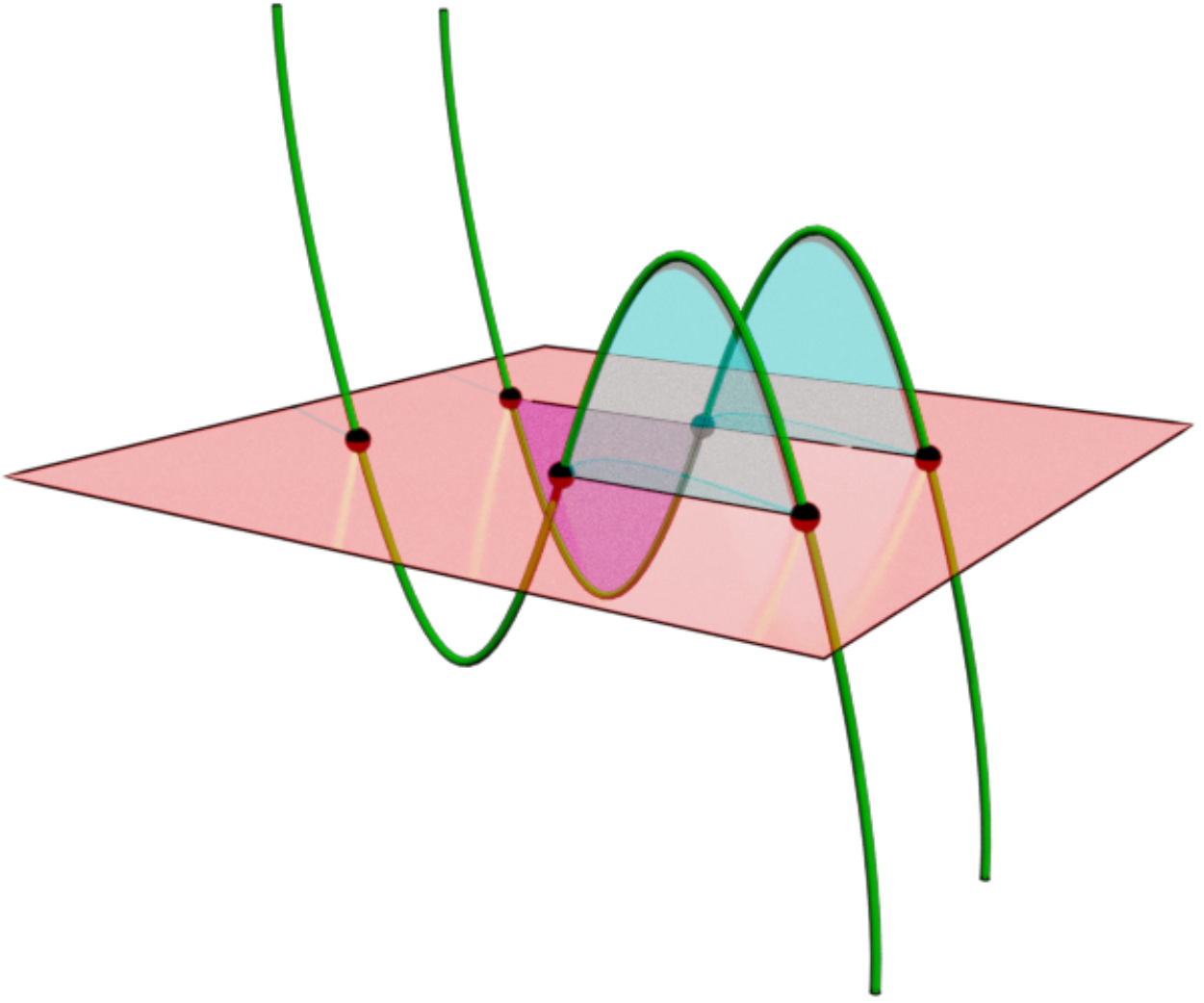}
%}
\end{subfigure}
\begin{subfigure}{.45\textwidth}
  \centering
  \labellist
\small\hair 2pt
\pinlabel {\rotatebox[origin=c]{10}{$I \times A$}} [c] at 188 420
\endlabellist
%  \fbox{
\includegraphics[width=\linewidth]{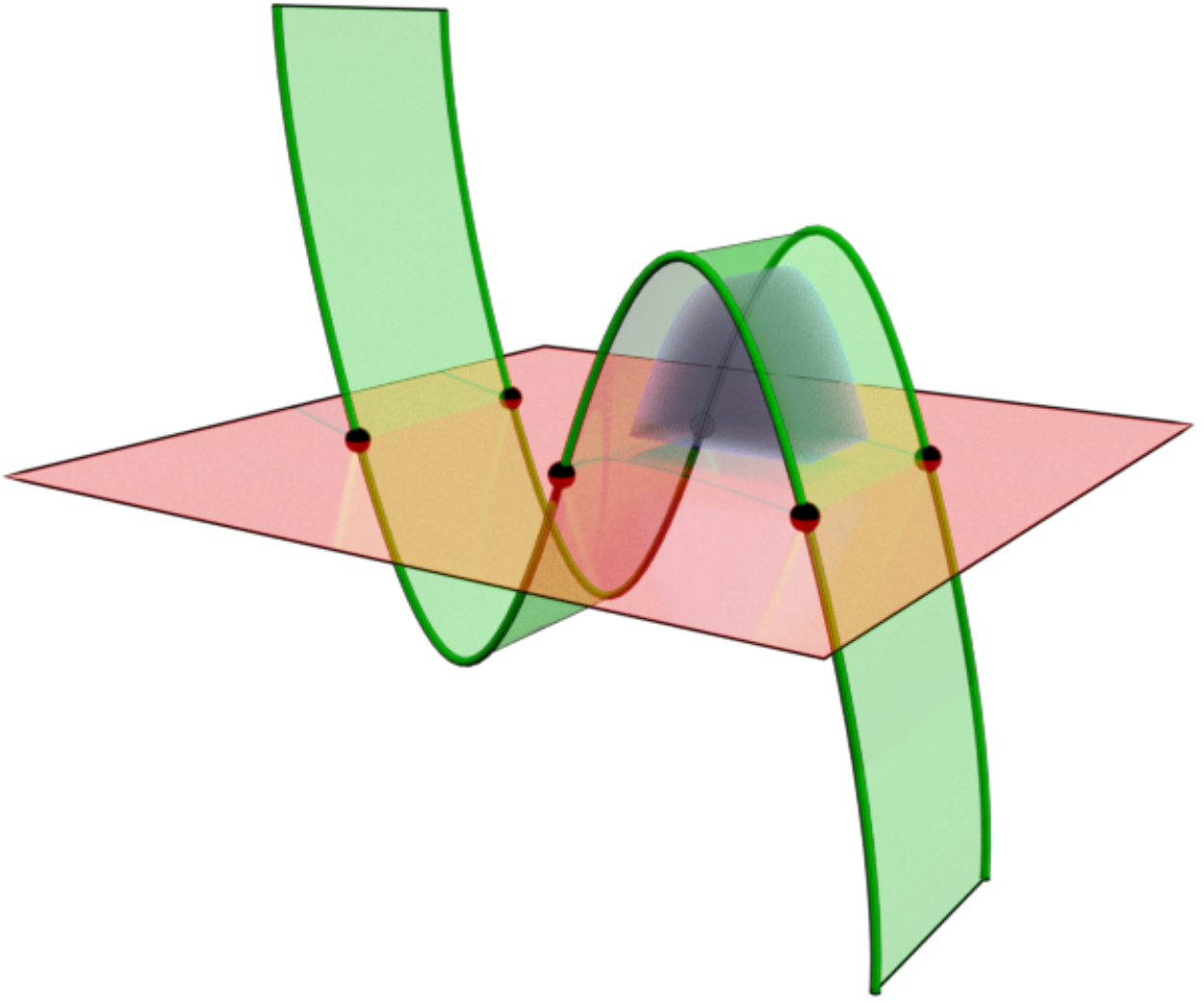}
%}
\end{subfigure}

\caption{On the left: $A$, $B$, $V$, and $W$ together with $\widetilde{A}$, a parallel copy of $A$, and the new version of $\widetilde{W}$. On the right, we highlight the fact that the apparent product $I \times W$ going from $W$ to $\widetilde{W}$ does not necessarily exist in the ambient manifold, and we illustrate this with a grey void where the $I \times W$ would be. However $A$ and $\widetilde{A}$ {\em are} the two ends of an embedding of $I \times A$ which is also shown in the figure on the right. There is also no ``void" on the $V$ side of the picture because the local model is constructed so that a neighbourhood $I \times V$ of $V$ does in fact extend all the way to $\widetilde{A}$. }
\label{F:AAtildeBVWWtilde}
\end{center}
\end{figure}

To describe Quinn's first modification, we introduce the notation $B_V$ to denote the result of modifying $B$ by performing a Whitney move across $V$, and similarly $B_W$; $B_{W,\widetilde{W}}$ denotes the result of a simultaneous Whitney move across $W$ and across $\widetilde{W}$. We initially start with $A$ not moving, and with $B$ moving backwards from $t=1/2$ to $B_V$ and forwards from $t=1/2$ to $B_W$. Although $\widetilde{A}$ is not the ascending sphere of a $2$--handle yet, Quinn's first modification is (a) to make sure that the Whitney move along $V$ is wide enough so that the isotopy from $B$ to $B_V$ eliminates intersections between $\widetilde{A}$ and $B$ at the same time as eliminating the corresponding intersections between $A$ and $B$, and (b) to replace the Whitney move along $W$ with simultaneous Whitney moves along $W$ and $\widetilde{W}$, so that the isotopy from $B$ to $B_{W,\widetilde{W}}$ also eliminates intersections with $\widetilde{A}$ as well as with $A$. This is illustrated in Figure~\ref{F:VMoveWWtildeMove}. The reason that we can make the $V$ move wide but have to do two separate narrow moves on $W$ and $\widetilde{W}$ is that the local model is built so that the product neighbourhood $I \times V$ does stretch from $A$ to $\widetilde{A}$, while $W$ and $\widetilde{W}$ are not assumed to be isotopic, so that there is not a product $I \times W$ doing the same thing on the $W$ side of the picture.

\begin{figure}[h]
\begin{center}

\begin{subfigure}{.45\textwidth}
  \centering
%  \fbox{
\includegraphics[width=\linewidth]{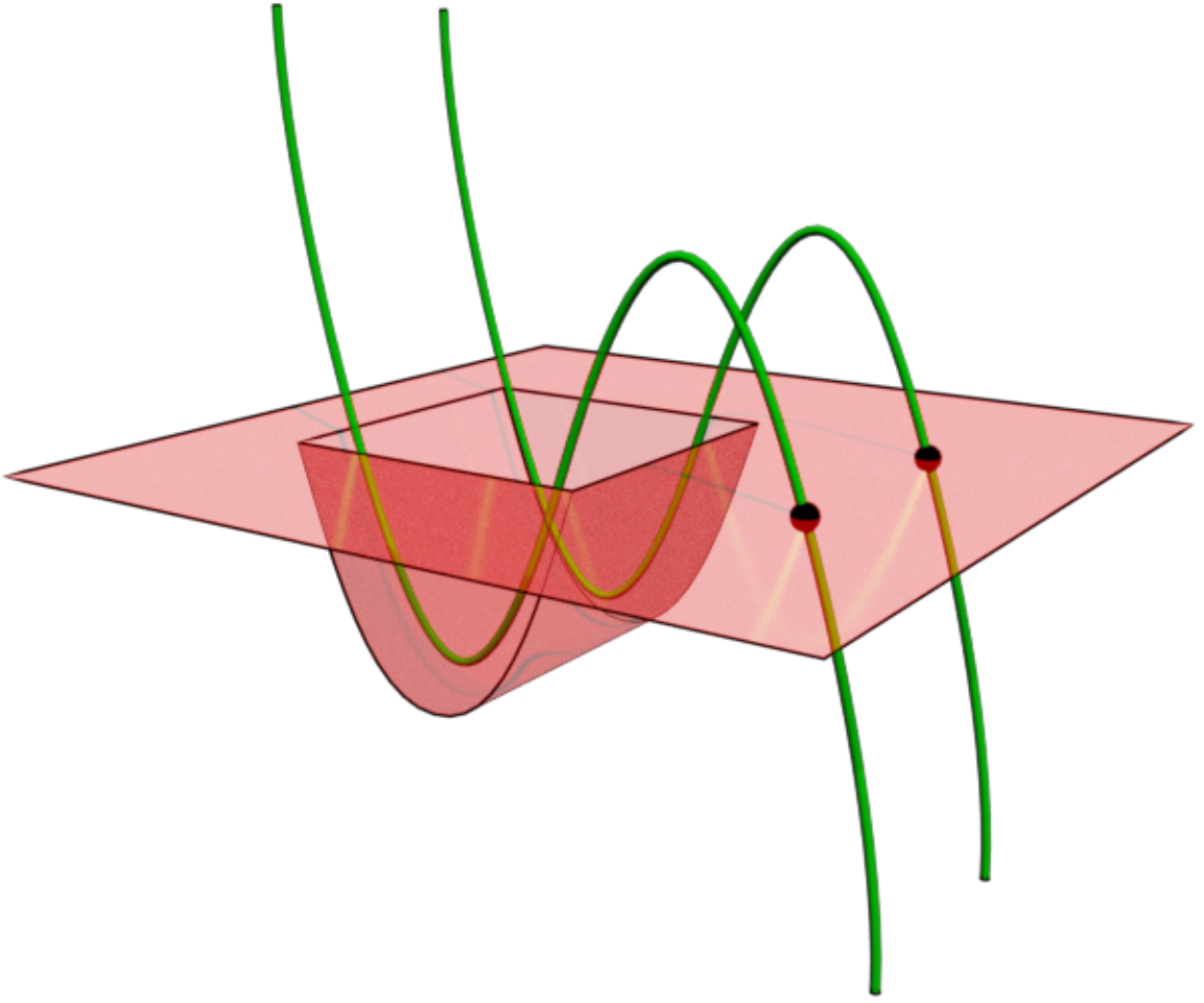}
%}
\end{subfigure}
\begin{subfigure}{.45\textwidth}
  \centering
%  \fbox{
\includegraphics[width=\linewidth]{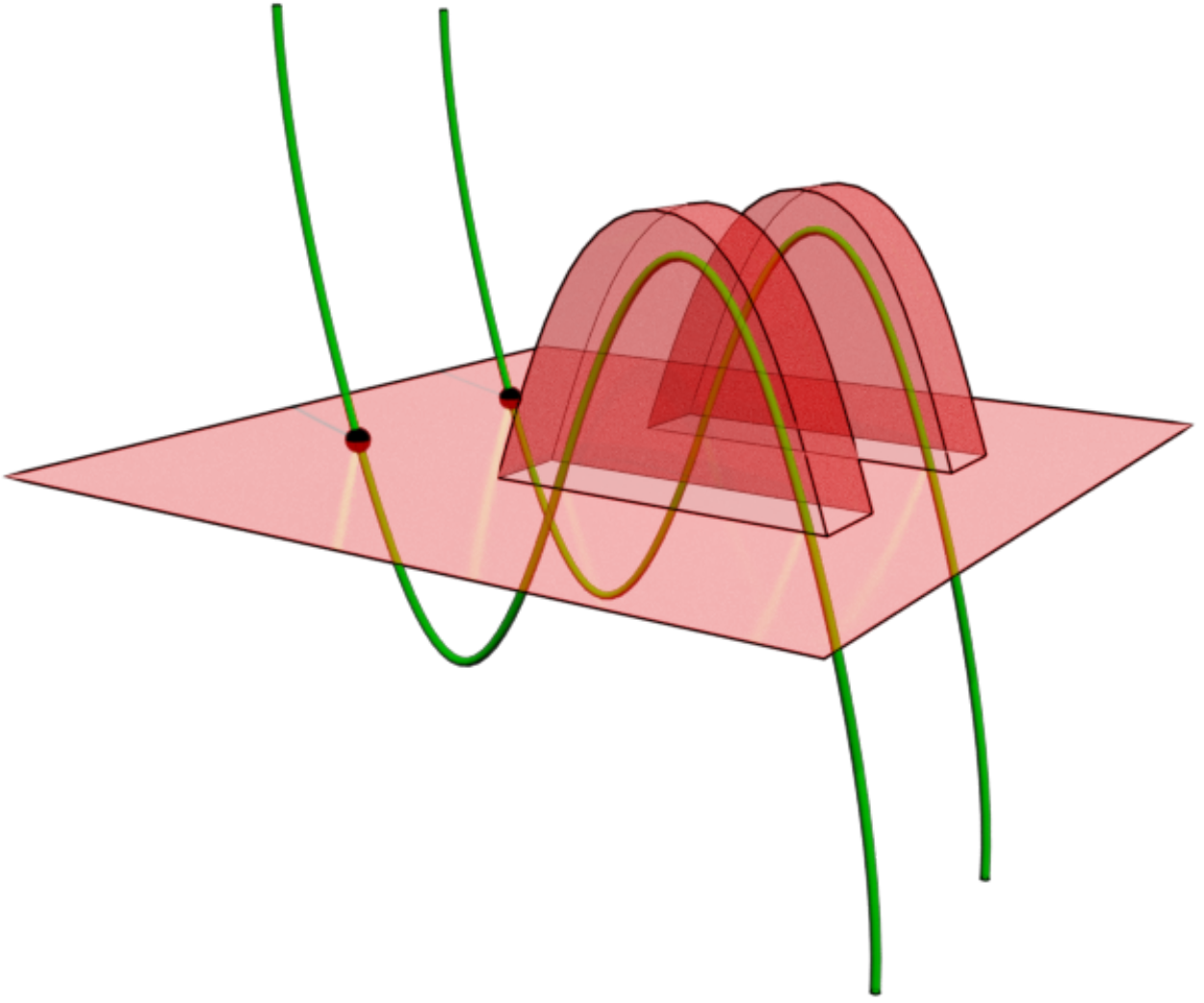}
%}
\end{subfigure}

\caption{The spheres $A$, $\widetilde{A}$, and $B_V$ on the left, and the spheres $A$, $\widetilde{A}$, and $B_{W,\widetilde{W}}$ on the right.  }
\label{F:VMoveWWtildeMove}
\end{center}
\end{figure}

If we follow the $I \times S^2$ bounded by $A$ and $\widetilde{A}$ down to a level below the $2$--handle corresponding to $A$, where the ambient manifold is now $M$, the $I \times S^2$ is surgered to a $D^3$, with boundary equal to $\widetilde{A}$. Turning things upside down, one can use this $3$--ball to create a cancelling $3$--$4$ handle pair, where the boundary of the $3$--ball is the attaching sphere for the $3$--handle and the $3$--ball is half of the attaching sphere for the $4$--handle. Turning things the right way up again, this is a new $1$--$2$ pair, and this birth can happen slightly after time $t_b$,  while the corresponding death happens slightly before time $t_d$. This yields a Cerf graphic as in Figure~\ref{fig:Quinn1223}.
\begin{figure}
  \labellist
  \tiny\hair 2pt
  \pinlabel $B$ [c] at 199 130
  \pinlabel $A$ [c] at 199 119
  \pinlabel {$\widetilde{A}$} [c] at 199 107
  \pinlabel {$I \times A$} [c] at 199 96

  \pinlabel $B_V$ [c] at 142 130
  \pinlabel $A$ [c] at 142 119
  \pinlabel {$\widetilde{A}$} [c] at 142 107
  \pinlabel {$I \times A$} [c] at 142 96

  \pinlabel $B_V$ [c] at 58 124
  \pinlabel $A$ [c] at 58 113
  \pinlabel {$I \times A$} [c] at 58 102

  \pinlabel $B_V$ [c] at 30 119
  \pinlabel $A$ [c] at 30 108

  \pinlabel $B_{W, \widetilde{W}}$ [c] at 255 130
  \pinlabel $A$ [c] at 255 119
  \pinlabel {$\widetilde{A}$} [c] at 255 107
  \pinlabel {$I \times A$} [c] at 255 96

  \pinlabel $B_{W, \widetilde{W}}$ [c] at 340 124
  \pinlabel $A$ [c] at 340 113
  \pinlabel {$I \times A$} [c] at 340 102

  \pinlabel $B_{W, \widetilde{W}}$ [c] at 368 119
  \pinlabel $A$ [c] at 368 108

  \endlabellist

 \centering
 \includegraphics[width=.9\textwidth]{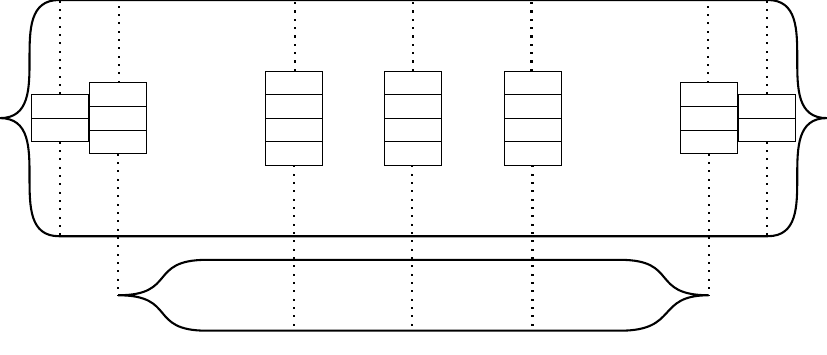}
     \caption{Cerf graphic after introducing a $1$--$2$--pair. The boxes in the interior of the main eye label the ascending and descending spheres of all the critical points. Since the lowest critical point is index $1$, its ascending sphere is $3$--dimensional. However, after rising up past two index $2$ critical points this $S^3$ is punctured twice and appears as an $I \times S^2$, identified with $I \times A$, with boundary equal to $A \amalg \widetilde{A}$. Note that ascending manifolds for birth/death points are also shown.}
    \label{fig:Quinn1223}
\end{figure}

Actually Quinn suggests that the new $1$--$2$ eye should start and end at the same time as the $2$--$3$ eye, rather than being slightly shorter as indicated in our figure. In what follows one might think that this is an important distinction, but we will argue that in fact this does not make a difference. For now, however, if one wishes one can move the cusps of the $1$--$2$ eye left and right as desired. We have not done this partly just because doing so would make it harder to keep track of ascending and descending spheres in the ``middle level''.

Looked at from the original middle level, above the original $2$--handle, the ascending sphere for the new $2$--handle is $\widetilde{A}$ and the ascending $3$--sphere for the new $1$--handle appears as the $I \times S^2$ bounded by $A$ and $\widetilde{A}$. This means that the $A$ $2$--handle can be cancelled with the new $1$--handle from slightly after the birth of the new $1$--$2$ pair to slightly before the death of the new $1$--$2$ pair. Performing this cancellation produces a Cerf graphic as in \cref{figure:Quinn1223swallowtails}.

\begin{figure}
  \labellist
  \tiny\hair 2pt
  \pinlabel $B$ [c] at 199 119
  \pinlabel $\widetilde{A}$ [c] at 199 108

  \pinlabel $B_V$ [c] at 171 119
  \pinlabel $\widetilde{A}$ [c] at 171 108

  \pinlabel $B_V$ [c] at 142 124
  \pinlabel $\widetilde{A}$ [c] at 142 113
  \pinlabel {$I \times A$} [c] at 142 102

  \pinlabel $B_V$ [c] at 113 130
  \pinlabel $\widetilde{A}$ [c] at 113 119
  \pinlabel {$A$} [c] at 113 107
  \pinlabel {$I \times A$} [c] at 113 96

  \pinlabel $B_V$ [c] at 84 130
  \pinlabel $A$ [c] at 84 119
  \pinlabel $\widetilde{A}$ [c] at 84 107
  \pinlabel {$I \times A$} [c] at 84 96

  \pinlabel $B_V$ [c] at 58 124
  \pinlabel $A$ [c] at 58 113
  \pinlabel {$I \times A$} [c] at 58 102

  \pinlabel $B_V$ [c] at 30 119
  \pinlabel $A$ [c] at 30 108

  \pinlabel {$B_{W, \widetilde{W}}$} [c] at 227 119
  \pinlabel $\widetilde{A}$ [c] at 227 108

  \pinlabel $B_{W, \widetilde{W}}$ [c] at 255 124
  \pinlabel $\widetilde{A}$ [c] at 255 113
  \pinlabel {$I \times A$} [c] at 255 102

  \pinlabel $B_{W, \widetilde{W}}$ [c] at 283 130
  \pinlabel $\widetilde{A}$ [c] at 283 119
  \pinlabel {$A$} [c] at 283 107
  \pinlabel {$I \times A$} [c] at 283 96

  \pinlabel $B_{W, \widetilde{W}}$ [c] at 311 130
  \pinlabel $A$ [c] at 311 119
  \pinlabel {$\widetilde{A}$} [c] at 311 107
  \pinlabel {$I \times A$} [c] at 311 96

  \pinlabel $B_{W, \widetilde{W}}$ [c] at 340 124
  \pinlabel $A$ [c] at 340 113
  \pinlabel {$I \times A$} [c] at 340 102

  \pinlabel $B_{W, \widetilde{W}}$ [c] at 368 119
  \pinlabel $A$ [c] at 368 108

  \endlabellist

 \centering
 \includegraphics[width=0.9\textwidth]{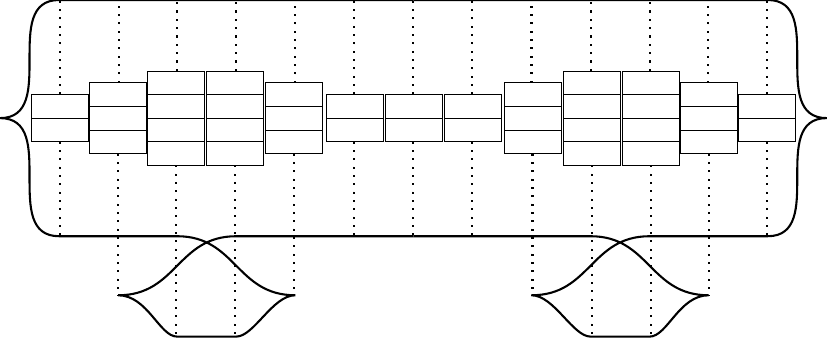}

    \caption{After cancelling the $A$ $2$--handle with the $1$--handle, again showing ascending and descending spheres in the middle level. Note now that between the two swallowtails, the ascending sphere for the index $2$ critical point is now $\widetilde{A}$ instead of $A$, and that the isotopy of $B$ going from the middle-middle level one step to the right, taking $B$ to $B_{W,\widetilde{W}}$, now removes the extra intersection points between $B$ and $\widetilde{A}$.}
    \label{figure:Quinn1223swallowtails}
\end{figure}

Note in this figure that the middle of the Cerf graphic is exactly what we would like it to be: the ascending sphere for the $2$-handle is $\widetilde{A}$, and $B$ moves backwards in time from $t=1/2$ by doing the Whitney move along $V$ so that $B_V$ is in cancelling position with $\widetilde{A}$, while $B$ moves forward in time from $t=1/2$ by doing the Whitney move along $W$ and $\widetilde{W}$, so that $B_{W,\widetilde{W}}$ is in cancelling position with $\widetilde{A}$. If that were all we saw of the Cerf graphic, i.e.\ if the Cerf graphic were cut off before the swallowtails arise, leaving an eye with just the boxes at the middle three time points of \cref{figure:Quinn1223swallowtails} or \cref{fig:SmallerSwallowTails} inside, then we would have the family of handlebodies that we desire. (At the very end, we could do less and less of the Whitney move along $W$ and since $A$ would no longer be in the picture, this would just be a deformation
of the family that does not create extraneous intersections between spheres, and we would be left with $B$ performing just the Whitney move along $\widetilde{W}$.)

Quinn's goal at this point is to cancel the swallowtails while moving them to the beginning and end of the Cerf graphic, thus widening the middle section so that in the end $\widetilde{A}$ is the ascending sphere for the one remaining $2$--handle for the entire time from the birth to the death. The problem is that, at some point, the swallowtails need to shrink, and that, in doing so, the ascending $3$--sphere for the $1$--handle appears in the middle level as a {\em shrinking} $I \times S^2$. In other words, the $I$ factor in the $I \times S^2$ needs to shrink to a point so that immediately after the swallowtail has disappeared, the $I \times S^2$ becomes an $S^2$, the ascending sphere for the single $2$--handle that remains. Figure~\ref{fig:SmallerSwallowTails} shows the same Cerf graphic with smaller swallowtails, on the way towards the swallowtail cancellation, with the emphasis on seeing how the ascending spheres change. Figure~\ref{fig:SmallerSpheres} accompanies Figure~\ref{fig:SmallerSwallowTails} to explain the labelling.

We would like to emphasise the main problem in Quinn's argument. Recall that the two boundary components of $I \times S^2$ in the ascending 3-sphere of the 1-handle are $A$ and $\widetilde{A}$.
Observe that if $W$ and $\widetilde W$ were isotopic (a trivial case in which the replacement criterion has no value), then $A$ and $\widetilde A$ would be  isotopic in the complement of $B_{W, \widetilde W}$.
In the general case that $W$ and $\widetilde W$ are not isotopic, we cannot assume that $A$ and $\widetilde A$ are isotopic in the complement of $B_{W, \widetilde W}$.
So during the shrinking of $I\times S^2$, some 2-spheres of the form $\{ *\}\times S^2$ will  have extra intersections with $B_{W, \widetilde W}$.

\cref{fig:SmallerSwallowTails,fig:SmallerSpheres} illustrate the problem with the scenario where the interval $I$ shrinks to its midpoint. In this case the new boundary 2-spheres of the product $I^*\times S^2=[1/4, 3/4]\times S^2$, denoted $A^*$ and $\widetilde{A^*}$ in the figures, have extra intersections with $B_{W, \widetilde W}$.
Note that $A^*$ and $\widetilde{A^*}$ serve as ascending spheres of the 2-handle for some respective values of the horizontal parameter $t$ in the Cerf graphic after the swallowtails are eliminated; this can be seen in \cref{fig:SmallerSwallowTails}.
The end result of the shrinking, $\{ 1/2\}\times S^2$, has extra intersections with $B_{W, \widetilde W}$ as well.  All these extra intersections give a contradiction with Quinn's claim.

Shrinking $I$ to any other point leads to essentially the same problem.
For example, the interval~$I$ will shrink with one end or the other fixed if, as Quinn proposes, we push the swallowtail all the way to the end as we cancel it.
Shrinking the interval $I$ to its endpoint, say $1$, has an intermediate stage $[1/2,1]\times S^2$. In this case, the two boundary 2-spheres are $A^*:=\{1/2\}\times S^2$ and $\widetilde A=\{1\} \times S^2$, and $A^*$ has extra intersections with $B_{W, \widetilde W}$. Hence analogously to the previous case, after the swallowtail is cancelled, $A^*$ appears as the ascending sphere of the 2-handle for some value of the parameter~$t$. These additional intersections therefore again contradict Quinn's claim.
As a consequence of this discussion we see that, in the course of the swallowtail cancellation, the intersections between $A$ or $\widetilde{A}$ and $B$ which were eliminated by doing Whitney moves across $W$ and $\widetilde{W}$ necessarily reappear.

\begin{figure}
  \labellist
  \tiny\hair 2pt
  \pinlabel $B$ [c] at 199 90
  \pinlabel $\widetilde{A}$ [c] at 199 79

  \pinlabel $B_V$ [c] at 171 90
  \pinlabel $\widetilde{A}$ [c] at 171 79

  \pinlabel $B_V$ [c] at 129 95
  \pinlabel $\widetilde{A^*}$ [c] at 128 84
  \pinlabel {$I^* \!\! \times A$} [c] at 128 73

  \pinlabel $B_V$ [c] at 99 101
  \pinlabel $A^*$ [c] at 99 79
  \pinlabel $\widetilde{A^*}$ [c] at 99 90
  \pinlabel {$I^* \!\! \times A$} [c] at 99 68

  \pinlabel $B_V$ [c] at 72 95
  \pinlabel $A^*$ [c] at 71 84
  \pinlabel {$I^* \!\! \times A$} [c] at 71 73

  \pinlabel $B_V$ [c] at 30 90
  \pinlabel $A$ [c] at 30 79

  \pinlabel {$B_{W, \widetilde{W}}$} [c] at 227 90
  \pinlabel $\widetilde{A}$ [c] at 227 79

  \pinlabel $B_{W, \widetilde{W}}$ [c] at 270 95
  \pinlabel $\widetilde{A^*}$ [c] at 270 84
  \pinlabel {$I^* \!\! \times A$} [c] at 270 73

  \pinlabel $B_{W, \widetilde{W}}$ [c] at 299 101
  \pinlabel $A^*$ [c] at 299 79
  \pinlabel {$\widetilde{A^*}$} [c] at 299 90
  \pinlabel {$I^* \!\! \times A$} [c] at 299 68

  \pinlabel $B_{W, \widetilde{W}}$ [c] at 327 95
  \pinlabel $A^*$ [c] at 327 84
  \pinlabel {$I^* \!\! \times A^*$} [c] at 327 73

  \pinlabel $B_{W, \widetilde{W}}$ [c] at 368 90
  \pinlabel $A$ [c] at 368 79

  \endlabellist

 \centering
 \includegraphics[width=0.9\textwidth]{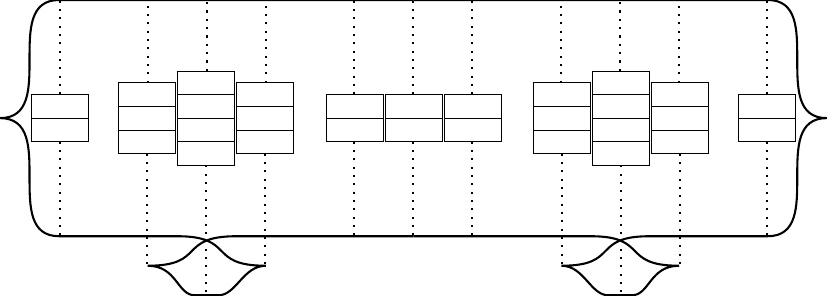}

 \caption{The problem appears here when we start to shrink the swallowtails. If $I = [0,1]$, then $I^*$ is smaller, e.g.\ $I^* = [1/4,3/4]$. The new ascending spheres $A^*$ and $\widetilde{A^*}$ are $\{1/4\} \times A$ and $\{3/4\} \times A$, if $A$ was originally identified with $\{0\} \times A$ and $\widetilde{A}$ was originally identified with $\{1\} \times A$. These spheres are illustrated in Figure~\ref{fig:SmallerSpheres}.}
 \label{fig:SmallerSwallowTails}
\end{figure}

\begin{figure}
    \centering

 \includegraphics[width=.4\textwidth]{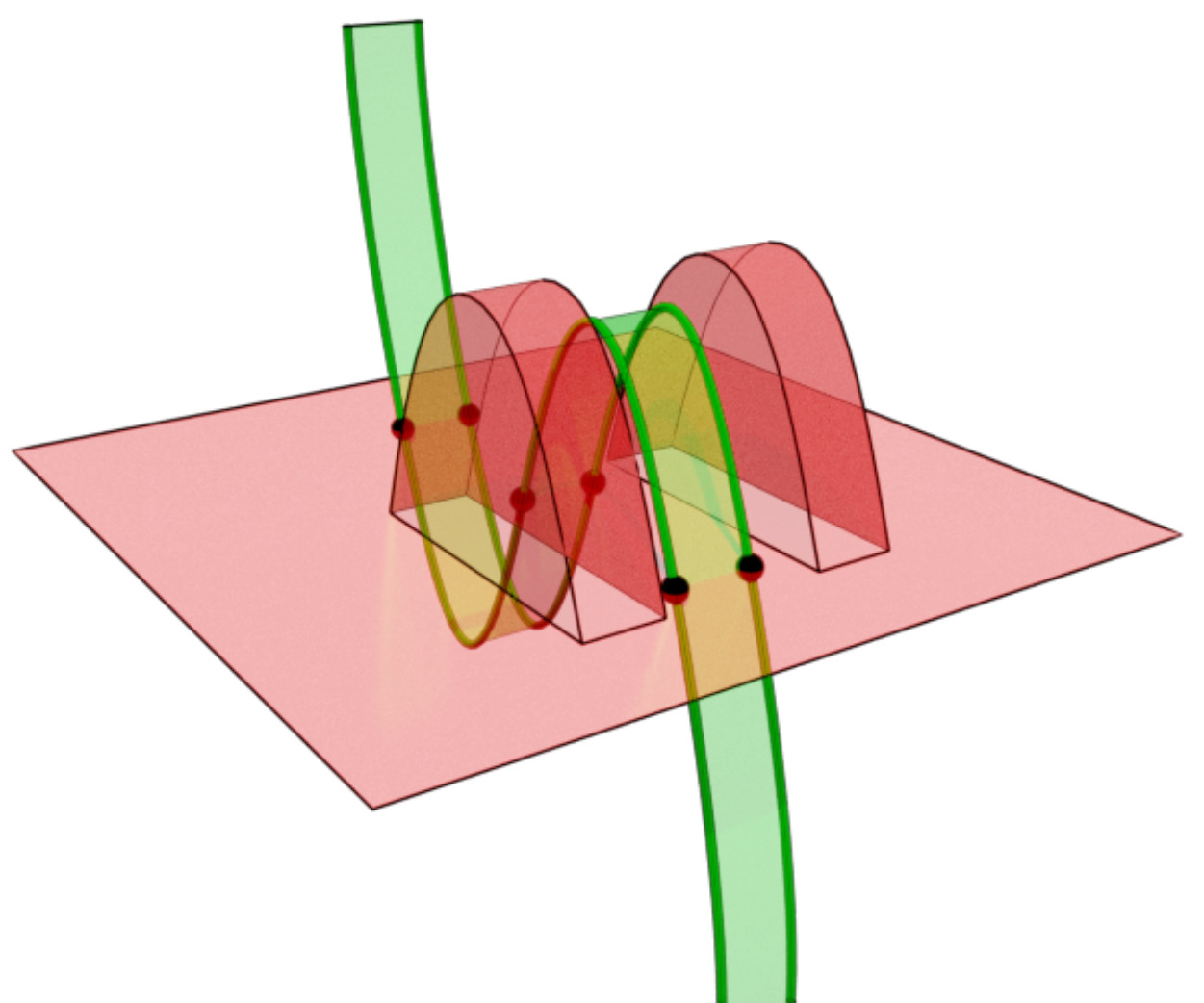}

 \caption{ Illustration to accompany Figure~\ref{fig:SmallerSwallowTails}, showing the ascending manifold $I^* \times A$ for the $1$--handle and the ascending spheres $A^*$ and $\widetilde{A^*}$ for the $2$--handles. Note that both $A^*$ and $\widetilde{A^*}$ are back to having three intersection points with $B_{W,\widetilde{W}}$.}
 \label{fig:SmallerSpheres}
\end{figure}

Now we show that in fact the end result of Quinn's method is nothing more than factorization. For the sake of concreteness in the exposition below, we consider the scenario where $I$ shrinks to its midpoint, but the same conclusion can be made regardless of how the interval shrinks. After completely cancelling the swallowtail, we are left with the ascending sphere for the $2$--handle at the time of the cancellation being a sphere which we now call $\overline{A}$, equal to $\{1/2\} \times A$ in the original $I \times A$. The resulting Cerf graphic with ascending and descending spheres is shown in Figure~\ref{fig:QuinnCancelled}. The key point to note is that now the ascending $A$ spheres are moving in time, whereas originally they were fixed and only the $B$ spheres moved. Furthermore, the sphere $\overline{A}$ moves forward in time to $A$ by a Whitney move along a new Whitney disc which we call $W^*$, and moves backwards in time to $\widetilde{A}$ by a Whitney move along a Whitney disc which we call $\widetilde{A^*}$. These Whitney (finger) moves are indicated by labelled arrows in Figure~\ref{fig:QuinnCancelled}, and the discs themselves are illustrated in Figure~\ref{fig:NewWhitneyDisks}.

\begin{figure}
  \labellist
  \tiny\hair 2pt
  \pinlabel $B$ [c] at 199 90
  \pinlabel $\widetilde{A}$ [c] at 199 79

  \pinlabel $B_V$ [c] at 143 90
  \pinlabel $\widetilde{A}$ [c] at 143 79

  \pinlabel $B_V$ [c] at 85 90
  \pinlabel $\overline{A}$ [c] at 85 79

  \pinlabel $V$ [c] at 171 110
  \pinlabel {$\widetilde{W},\simeq$} [c] at 228 110

  \pinlabel $\simeq$ [c] at 59 89
  \pinlabel $\simeq$ [c] at 113 89

  \pinlabel $B_V$ [c] at 30 90
  \pinlabel $A$ [c] at 30 79

  \pinlabel {$B_{W, \widetilde{W}}$} [c] at 255 90
  \pinlabel $\widetilde{A}$ [c] at 255 79

  \pinlabel $B_{W, \widetilde{W}}$ [c] at 312 90
  \pinlabel $\overline{A}$ [c] at 312 79

  \pinlabel $B_{W, \widetilde{W}}$ [c] at 368 90
  \pinlabel $A$ [c] at 368 79

  \pinlabel $\widetilde{W^*}$ [c] at 288 59
  \pinlabel {$W^*$} [c] at 340 60

  \endlabellist

 \centering
 \includegraphics[width=.9\textwidth]{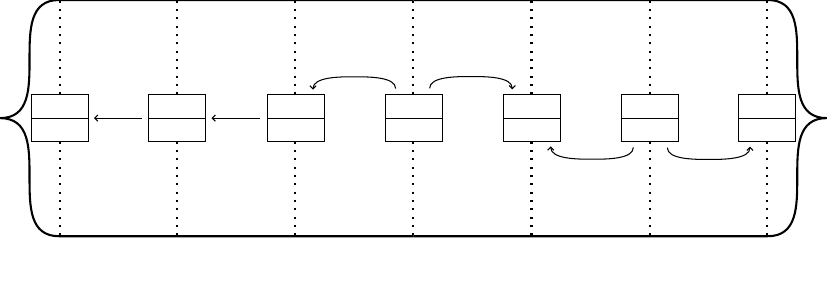}

     \caption{After cancelling the swallowtails. The arrows indicate finger or Whitney moves. The arrow labelled ``$\widetilde{W},\simeq$'' indicating a Whitney move across $\widetilde{W}$ and an isotopy, the isotopy being the Whitney move across $W$. The new finger/Whitney discs $\widetilde{W^*}$ and $W^*$ are illustrated in the accompanying Figure~\ref{fig:NewWhitneyDisks}.}
    \label{fig:QuinnCancelled}
\end{figure}

\begin{figure}
 \centering

 \includegraphics[width=.4\textwidth]{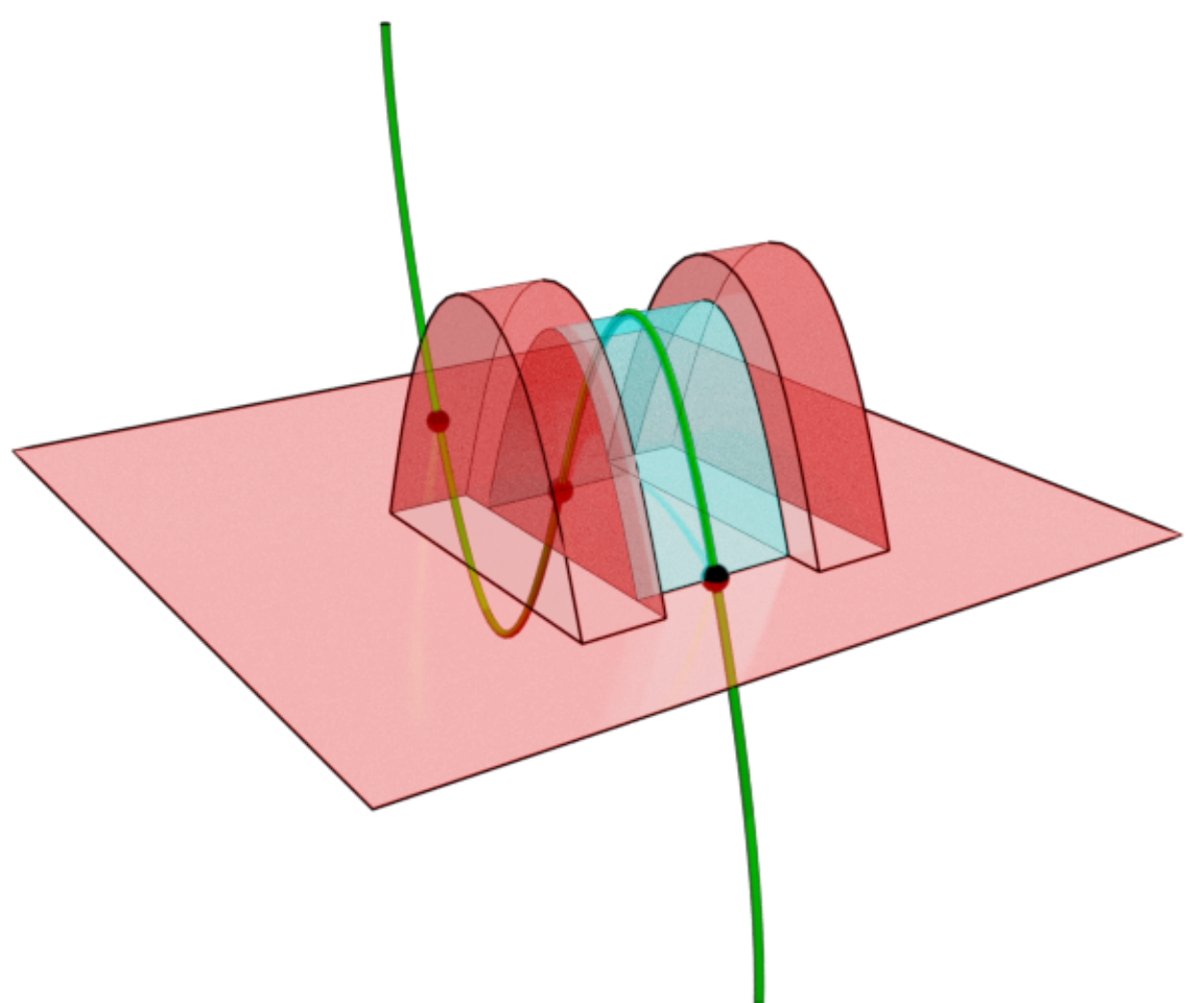}

 \caption{The spheres $\overline{A}$ and $B_{W,\widetilde{W}}$, and the Whitney discs $W^*$ and $\widetilde{W^*}$. These Whitney discs are built from a subset of the strip labelled $I \times A$ in \cref{F:AAtildeBVWWtilde}.}
 \label{fig:NewWhitneyDisks}
\end{figure}

The final observation is that the $4$--tuple $(\overline{A},B_{W,\widetilde{W}},W^*,\widetilde{W^*})$ is in fact isotopic to the original $4$--tuple $(A,B,W,\widetilde{W})$ as seen in Figure~\ref{F:ABVWWtilde}. Thus after an isotopy in the middle middle level, the family illustrated in Figure~\ref{fig:QuinnCancelled} becomes a family in which, starting at the middle and working to the right, $B$ does a Whitney move across $\widetilde{W}$, then undoes that with a finger move back across $\widetilde{W}$, and then does the original Whitney move across $W$.

In summary, the problem in the proof given by Quinn in  \cite{Quinn:isotopy}*{Section~4.5} is in the very last sentence of the proof. One can remove the right hand swallowtail from the Cerf graphic, but the subtlety is that this necessitates an isotopy between the spheres $A$ and $\wt{A}$.  The resulting isotopy intersects $B$ in general, introducing new finger and Whitney moves between $\wt{A}$ and $B$ that were missed by Quinn. The outcome is a collection of finger-Whitney moves with discs $\wt{W}\cdot W$.  In other words, it is exactly the same as the outcome of a factorisation inserting $\ul{\wt{W}}\cdot \wt{W}$.  But a factorisation is technically much simpler, and one is still left with a problem to solve.  So in our proofs in the earlier sections of this paper we appeal to factorisation instead of the method of proof of \cite{Quinn:isotopy}*{Section~4.5}.

An isotopy between $A$ and $\wt{A}$ is destined to create intersections with $B$ if the union of Whitney discs $\wt{W} \cup W$, pushed slightly into the complement of $A$ and $B$, is a homotopically essential 2-sphere in the complement of $A\cup B$.

This leads to the extremely interesting open question of whether the replacement criterion holds in the smooth category, in the simply connected case, especially if one is permitted the additional assumption that the discs one wishes to switch are homotopic rel.\ boundary in the middle-middle level $M \#^k S^2 \times S^2$ (but not necessarily in the complement of $A \cup B$).

\begin{problem}\label{problem:DRC}
   Consider a pseudo-isotopy of a smooth, 1-connected 4-manifold $M$, with associated 1-parameter family having data $(A,B,V,W)$ that satisfies Quinn's arc condition.  Let $\wt{W}$ be a collection of Whitney discs in one to one correspondence with the discs in $W$, that pairwise have the same boundary as $W$, and induce compatible framings.  Suppose that the interiors of $W$ and $\wt{W}$ are disjoint, and that $W_k \cup \wt{W}_k$ is trivial in $\pi_2(M \#^n S^2 \times S^2)$, for all $k$. Is there a deformation of the pseudo-isotopy replacing the family $W$ with the family $\wt{W}$?
\end{problem}

\begin{remark}
    The key inductive step in our proof of \cref{thm:PI-stable-I}, at the end of \cref{section:fixing-the-proof}, involves replacing one system of finger discs $V$ with another $\wh{V}$. To do this we need that the replacement discs $\wh{V}$ have framed embedded geometrically dual spheres and then we need to stabilise the $4$-manifold. Switching the r\^{o}les of $V$ and $W$, this can be thought of as a stable version of the disc replacement criterion: the disc replacement criterion is {\em stably} true provided the replacement discs have dual spheres.
\end{remark}

\subsection{An interesting diffeomorphism of $S^4$}\label{subsec:interesting-diffeo-S4}

We illustrate a potential application of Problem~\ref{problem:DRC} in a particular example. Start with a trivial pseudo-isotopy of $S^4$ whose Cerf graphic is empty. Deform this pseudo-isotopy by creating a single $2, 3$-handle eye   with no finger or Whitney moves.  Deform this family of generalised Morse functions further to one where the spheres $A, B$ undergo a single finger move and a single Whitney move. The finger and Whitney discs are standard and satisfy Quinn's arc condition, as shown in the $3$-dimensional slice in Figure~\ref{fig: a special case of Whitney}.

\begin{figure}[h]
\centering
\includegraphics[height=2.7cm]{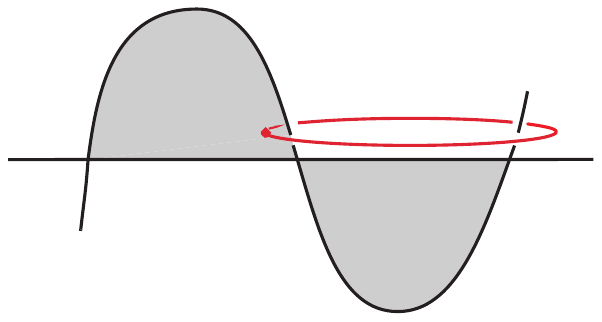}
{\scriptsize
\put(-145,42){$A$}
\put(-134, 20){$B$}
\put(-104,52){$V$}
\put(-50,20){$W$}
\put(-50,53){{\color{red}$S_W$}}
}
\caption{The data of finger and Whitney discs in the middle-middle level determining a potentially nontrivial pseudo-isotopy of $S^4$. }
\label{fig: a special case of Whitney}
\end{figure}

Recall the construction of Whitney spheres from \cref{subsection:whitney spheres}. Consider the Whitney sphere $S_W$; it is disjoint from $W$ and intersects $V$ in a single point. Consider a disc $\widetilde W$ whose boundary is identical to $\partial W$ and whose interior is a slight displacement of that of $W$, tubed into $S_W$. Now we consider a new pseudo-isotopy determined by the pair $(V, \widetilde W)$.  It gives rise to a self-diffeomorphism $f$ of $S^4$. Since $V$ intersects $\widetilde W$ in a point, there is no immediate way to trivialise this pseudo-isotopy.

\begin{conjecture}\label{conjecture}
 The diffeomorphism $f \colon S^4 \to S^4$ is not smoothly isotopic to the identity.
 \end{conjecture}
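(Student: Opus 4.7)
The plan would be to detect $f$ via a smooth invariant that distinguishes it from the identity; a positive answer to this conjecture would resolve the well-known open problem of whether $\pi_0\Diff^+(S^4)$ contains an exotic element. First I would seek a concrete handle-theoretic model for $f$: unpacking the pseudo-isotopy data $(V,\widetilde{W})$ on $S^4 \times I$ yields a $5$-dimensional handle decomposition with a single cancelling $2,3$-handle pair, and Kirby simplification of this data should either produce a recognizable diffeomorphism or allow comparison with the standard exotic candidates (the Gluck twist, Cappell--Shaneson-type maps, or cork-boundary twists of $S^4$). A useful intermediate step is to identify the loop in $\Diff(S^4)$ that the $1$-parameter family $(G_t,\xi_t)$ sweeps out, since a Watanabe-style computation may be more tractable on this loop than on $f$ itself.

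The main challenge is choosing a viable invariant. Standard families Seiberg--Witten invariants (Ruberman, Baraglia--Konno) are unavailable on $S^4$ because $b_2^+(S^4)=0$, so one must turn to $\operatorname{Pin}(2)$-equivariant refinements (Lin, Konno--Mallick--Taniguchi) or Watanabe's Kontsevich configuration-space invariants applied to the mapping torus $S^4 \rtimes_f S^1$. Crucially, any such invariant must be genuinely unstable under connected sum with $S^2 \times S^2$, because \cref{thm:PI-stable-I} forces $f$ to become smoothly isotopic to the identity after finitely many stabilizations; this severely constrains the admissible invariants and rules out most standard constructions, which are either additive under connected sum or vanish for rational homology spheres.

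The hard part is precisely this stability tension. The invariant must register the geometric data encoded in the Whitney sphere $S_W$ --- in particular the essential transverse intersection $V \pitchfork \widetilde{W}$ --- yet vanish once enough $S^2 \times S^2$ summands are adjoined. This pushes one toward $\operatorname{Pin}(2)$ or $\Spin^c$ equivariant invariants depending on a structure obstructed from extending across the stabilising handles, or toward an Arf/Rochlin-type secondary invariant extracted from intersections in a Whitney tower built on top of $V \pitchfork \widetilde{W}$. As a complementary route, one could pursue the contrapositive: if $f$ were smoothly isotopic to the identity, then by a factorisation argument as in \cref{section:fixing-the-proof} the disc replacement criterion would hold in this particular case, so producing any direct geometric obstruction to the unstable instance of \cref{problem:DRC} specified by $(A,B,V,W,\widetilde{W})$ would settle the conjecture without gauge theory. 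I expect the principal obstacle throughout to be constructing any invariant that is simultaneously sensitive enough to detect $f$ and unstable enough to be consistent with \cref{thm:PI-stable-I}.
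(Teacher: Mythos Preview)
The statement is a \emph{conjecture}, and the paper does not prove it. It is presented as an open problem: the paper constructs $f$ and observes that an affirmative answer to \cref{problem:DRC} would make $f$ smoothly isotopic to the identity, falsifying the conjecture. There is therefore no proof in the paper to compare your proposal against.

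Your write-up is a research strategy rather than a proof, and as such it is a reasonable survey of the obstacles: you correctly identify that a positive answer would produce an exotic element of $\pi_0\Diff^+(S^4)$, that standard families Seiberg--Witten theory is unavailable because $b_2^+(S^4)=0$, and that \cref{thm:PI-stable-I} forces any candidate invariant to die after stabilisation by $S^2\times S^2$. These are exactly the difficulties that make the conjecture hard.

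However, your ``complementary route'' contains a logical error. The paper establishes the implication
\[
\text{DRC holds for } (A,B,V,W,\widetilde{W}) \;\Longrightarrow\; f \text{ is smoothly isotopic to the identity},
\]
whose contrapositive is: if $f$ is \emph{not} isotopic to the identity, then this instance of the DRC fails. You invoke the \emph{converse}: ``if $f$ were smoothly isotopic to the identity, then \dots\ the disc replacement criterion would hold in this particular case.'' That direction is not established anywhere, and there is no obvious mechanism for it: an isotopy from $f$ to the identity gives some deformation of the pseudo-isotopy to a trivial one, but not necessarily one that realises the specific replacement $\widetilde{W}\rightsquigarrow W$. Consequently, exhibiting a geometric obstruction to this instance of \cref{problem:DRC} would be interesting in its own right but would \emph{not} prove the conjecture.
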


Note that the interiors of $W$ and $\widetilde W$ are disjoint and $W \cup \wt{W} = 0 \in \pi_2(S^2 \times S^2)$. Thus if the answer to \cref{problem:DRC} is affirmative, then $f$ would be smoothly isotopic to the identity, and \cref{conjecture} would be false.

\def\MR#1{}
\bibliography{bibliography}

\end{document}